\documentclass[12pt, reqno]{amsart}
\allowdisplaybreaks
\usepackage{amssymb,amsthm,amsmath,bm}
\usepackage[numbers,sort&compress]{natbib}

\linespread{1.2}

\hoffset -3.5pc

\title[]{Initial-boundary value problem to 2D Boussinesq equations for MHD convection with stratification effects}
\author{Dongfen Bian}
\address[Dongfen Bian]{School of mathematics and statistics, Beijing Institute of Technology, Beijing, 100081, China.}
\address[]{Beijing Key Laboratory on MCAACI, Beijing Institute of Technology, Beijing, 100081, China.}
\email{biandongfen@bit.edu.cn}
\author{Jitao Liu}
\address[Jitao Liu]{College of Applied Sciences, Beijing University of Technology, Beijing, 100124, China.}
\email{jtliu@bjut.edu.cn, jtliumath@qq.com}


\keywords{MHD-Boussinesq system; temperature-dependent viscosity; initial-boundary value problem.}
\thanks{{\em 2010 Mathematics Subject Classification.} 35Q30, 76D03.}

\theoremstyle{plain}
\newtheorem{corollary}{Corollary}[section]
\newtheorem{theorem}{Theorem}[section]
\newtheorem{lemma}{Lemma}[section]
\newtheorem{proposition}{Proposition}[section]

\theoremstyle{definition}
\newtheorem{definition}{Definition}[section]

\newtheorem{remark}{Remark}[section]

\let\f=\frac
\let\p=\partial
\def\R{\Bbb R}

\def\grad{\nabla}
\def\dv{\mbox{div}}
\def\no{\noindent}
\def\endproof{\hphantom{MM}\hfill\llap{$\square$}\goodbreak}
\newcommand{\beq}{\begin{equation}}
\newcommand{\eeq}{\end{equation}}
\newcommand{\ben}{\begin{eqnarray}}
\newcommand{\een}{\end{eqnarray}}
\newcommand{\beno}{\begin{eqnarray*}}
\newcommand{\eeno}{\end{eqnarray*}}


\begin{document}


\begin{abstract}
This paper is concerned with the initial-boundary value problem to 2D magnetohydrodynamics-Boussinesq system with the temperature-dependent viscosity, thermal diffusivity and electrical conductivity. First, we establish the global weak solutions under the minimal initial assumption. Then by imposing higher regularity assumption on the initial data, we  obtain
the global strong solution with uniqueness. Moreover, the exponential decay estimate of the solution is obtained.
\end{abstract}
\maketitle

\section{Introduction and main results}\hspace*{\parindent}

In this paper, we consider the following 2D incompressible Boussinesq equations for magnetohydrodynamics (MHD) convection with stratification effects \cite{P-B-M-2013, Bian-Gui, Bian-Guo-Gui-Xin}:
\begin{equation}\label{B-MHD}
	\begin{cases}
		\partial_t \theta + u\cdot\nabla\theta-	\mbox{div} (\kappa(\theta)\nabla\theta))=-u_2 T_0'(x_2), \\
		\partial_t u +  u\cdot\nabla u -\dv (\mu(\theta)
		\nabla u)+
		\grad \Pi= \theta e_2+J \,B^{\perp}, \\
		\partial_t B+u \cdot \grad B-\grad^{\perp}(\sigma(\theta)J)=B\cdot \grad u, \\
		\mbox{div}  u =\dv\, B= 0.
	\end{cases}
\end{equation}
The unknowns are the temperature $\theta$ (or the density in the modeling of geophysical fluids), the solenoidal velocity field $u=(u_1, u_2)$, the magnetic field $B=(B_1,B_2)$, and the scalar pressure $\Pi$. Here the current density $J = \grad^{\perp} \cdot B$, $\grad^{\perp}=(-\partial_2, \partial_1)^{T}$ and $e_2=(0,1)$. In addition, we denote here by  $\sigma(\theta)$  the electrical conductivity of the fluid, $\mu(\theta)$ the fluid viscosity, and $\kappa(\theta)$ the thermal diffusivity. All of them are assumed to be smooth in $\theta$ and satisfy
\begin{equation}\label{Assumption}
\kappa(\theta ), \mu(\theta ), \sigma(\theta )\geq C_0^{-1}.
\end{equation}

Physically, the first equation of $\eqref{B-MHD}$ describes the temperature fluctuation in which the term  $-u_2T_0'(x_2)$ stratification effects about a linear mean temperature profile $T_0(x_2)$ in the direction of gravity \cite{P-B-M-2013}. The second equation of  $\eqref{B-MHD}$ represents the conservation law of the momentum with the effect of the buoyancy $\theta e_2 $ and the Lorentz force $J\, B^{\perp}$. The last equation of  $\eqref{B-MHD}$ shows that the electromagnetic field is governed by the Maxwell equation. The sign of $T_0'(x_2)$  that appears in the equation of the temperature $\theta$ is critical (cf.\cite{Majda-2002}). For the case $T_0'(x_2)<0$, the situation is unstable because the hot fluid at the bottom is less dense than the fluid above it. While for the case $T_0'(x_2)>0$, the density decreases with height and the heavier fluid is below lighter fluid. This is the situation of stable stratification, and the real quantity $\mathcal{N}(x_2) :=\sqrt{T_0'(x_2)}$ is called the buoyancy or Brunt-V\"{a}is\"{a}r\"{a} frequency (stratification-parameter) \cite{Ib-Yo, Majda-2002}. In one word, the system \eqref{B-MHD} is a combination of the incompressible Boussinesq equations of fluid dynamics and Maxwell's equations of electromagnetism, where the displacement current is neglected \cite{ku-ly, lau-li}.

When the fluid is not affected by the temperature and stratification, that is, $\theta \equiv 0$ and $T_0(x_2) \equiv {\rm Const.}$, then the equations  \eqref{B-MHD} become the standard MHD system and govern the dynamics of the velocity and the magnetic field in electrically conducting fluids such as plasmas and reflect the basic physics conservation laws. There have been a lot of studies on MHD by physicists and mathematicians. For instance, G. Duvaut and J. L. Lions \cite{du-lions} established the local existence and uniqueness of solutions in the Sobolev spaces $H^s(\mathbb{R}^d)$, $s \geq d$. Besides, the global existence of solutions for small initial data is also proved in this paper. Then M. Sermange and R. Temam  \cite{ser-te} examined the properties of these solutions. In particular, for two dimensional case, the local strong solution has been proved to be global and unique. Recent work on the MHD equations developed regularity criteria in terms of the velocity field and dealt with the MHD equations with dissipation and magnetic diffusion (see, e.g.
\cite{chen-m, he-xin, he-xin2}). Also the issue of global regularity on the MHD equations with partial dissipation, has been extensively studied (see, e.g., \cite{cao-wu, cao-wu-yuan, L-X-Z-2013, lin-zh, R-W-X-Z-2014, hu-lin, lei, Zhang-ting2}). Further background and motivation for the MHD system may be found in \cite{chen-m, des-br, du-lions, ger-br, Gui-2014, he-xin, ser-te, lei2005, lei-zhou, wu-wu-xu, Xu-Zhang, Zhang-ting} and references therein.

On the other hand, if the fluid is not affected by the Lorentz force and stratification, that is, $B \equiv 0$ and $T_0(x_2) \equiv {\rm Const.}$, then the equations  \eqref{B-MHD} become the classical Boussinesq system. In \cite{Dan-Pai-2008}, R. Danchin and M. Paicu obtained the global existence
of weak solution for $L^2$ initial data. Started from D. Chae, T. Y. Hou and C. Li \cite{Chae-2006, H-L-2005},  there are many works devoted to the 2D Boussinesq system with partial constant viscosity, one can also refer to \cite{A-H-2007, CaoWu3,ChenLiu,Danchin,H-K-2009,JiuLiu,L-P-Z-2011,WZ} for related works. Regarding the Boussinesq system with temperature-dependent viscosity and thermal diffusivity, Wang-Zhang \cite{zhifeizhang}
proved the global well-posed of Cauchy problem for smooth initial data , see also \cite{SunZhang} for initial-boundary value problem. This result was then generalized to the case without viscosity by Li-Xu \cite{Li-Xu-2013} and Li-Pan-Zhang \cite{LiPanZhang} for the whole space and bounded domain separately.

For Boussinese-MHD system \eqref{B-MHD} with the temperature-dependent  viscosity, thermal diffusivity  and  electrical conductivity,  Bian-Gui \cite{Bian-Gui} and Bian-Guo-Gui-Xin \cite{Bian-Guo-Gui-Xin} rigorously justified the stability and instability in a fully nonlinear, dynamical setting from  mathematical point of view in unbounded domain. However, in real world, the flows often move in bounded domains with constraints from boundaries,
where the initial boundary value problems appear. Compared with Cauchy problems, solutions of the initial boundary
value problems usually exhibit different behaviors and much richer phenomena. In \cite{Bian}, the author obtained the global well-posedness for Boussinese-MHD system \eqref{B-MHD} in bounded domain with constant viscosity. Nevertheless, for initial-boundary value problem to the system \eqref{B-MHD} with temperature-dependent viscosity, it is still open.

In this paper, we will investigate the initial-boundary value problem to the system \eqref{B-MHD} with the temperature-dependent viscosity, thermal diffusivity and electrical conductivity in a bounded domain. Without loss of generality, we take $\mathcal{N}=1$ which does not change the results of original model. Under this assumption, the system  \eqref{B-MHD} is reformulated as \begin{equation}\label{B-MHD-equ}
 	\begin{cases}
 		\p_t \theta + u\cdot\nabla\theta-\dv(\kappa(\theta)\nabla\theta))=-u_2, \\
 		\p_t u + u\cdot\nabla u -\dv (\mu(\theta)
 		\nabla u)+
 		\grad \Pi=\theta e_2+B\cdot\nabla B, \\
 		\p_t B+u \cdot \grad B-\grad^{\perp}(\sigma(\theta)J)=B\cdot \grad u, \\
 		\dv\, u =\dv\, B= 0.
 	\end{cases}
 \end{equation}
What's more, we will treat \eqref{B-MHD-equ} with prescribed initial conditions:
 \begin{align}\label{initial}
 	\left(\theta,u,B\right)(x,0)=(\theta_0,u_0,B_0),~~ x\in \Omega,
 \end{align}
 and physical boundary conditions:
 \begin{align}
 	&\theta|_{\p\Omega}=0,\quad u|_{\partial \Omega}=0,\quad B|_{\p\Omega}=0.\label{B-MHD-boundary-B}
 \end{align}
In addition, we also require the following compatibility conditions
\begin{equation}\label{eq3}
\left\{\begin{array}{ll}
\theta_0|_{\p \Omega}=u_0|_{\p\Omega}=B_0|_{\p\Omega}=0,&\\
\nabla\cdot u_0=\nabla\cdot B_0=0,&\\
{[u_0\cdot\nabla \theta_0]}|_{\p \Omega}={[\nabla\cdot(\kappa(\theta_0)\nabla\theta_0)-u_{2}(0,x)]}|_{\p \Omega},\\
{[u_0\cdot\nabla u_0+\nabla\pi_0]}|_{\p \Omega}={[\nabla\cdot(\mu(\theta_0)\nabla u_0)+B_0\cdot\nabla B_0+\theta_0e_2]}|_{\p \Omega},\\
{[u_0\cdot\nabla B_0]}|_{\p \Omega}={[\nabla^{\perp}(\sigma(\theta_0)\nabla^{\perp}\cdot  B_0)+B_0\cdot\nabla u_0]}|_{\p \Omega}.
\end{array}\right.
\end{equation}
Here $\pi_0$ is determined by the divergence-free condition $\nabla\cdot u_0=0$ with the Neumann boundary condition
\begin{equation}\label{eq4}
\nabla \pi_0\cdot{\bm n}|_{\p \Omega}=[\nabla\cdot(\mu(\theta_0)\nabla u_0)+B_0\cdot\nabla B_0+\theta_0e_2-u_0\cdot\nabla u_0]\cdot{\bm n}|_{\p \Omega}
\end{equation}
and ${\bm{n}}$ denotes the unit outward normal on $\partial\Omega$.

Now, we are in the position to state the main results of this paper. Our first result is concerning the solvability for the weak solution of \eqref{B-MHD} with the initial data in the energy spaces.

\begin{theorem}\label{T1}
Let $\Omega\subset\R^2$ be a bounded domain with smooth boundary. Assume $(\theta_0,\,u_0,\,B_0)\in H_0^{1}(\Omega)$, then \eqref{B-MHD-equ}-\eqref{B-MHD-boundary-B} has a global weak solution in the sense of Definition \ref{weak}. Moreover, the solution has the following decay estimate
\begin{equation}\label{u-B-estimate}
\left\|\theta(\cdot,t),\,u(\cdot,t),\,B(\cdot,t)\right\|_{H^1(\Omega)}^2\leq Ce^{-\alpha t},~~ \forall~ t\geq 0,
\end{equation}
where $C$ and $\alpha$ are the constants depending on $C_0$, $\Omega$ and $\left\|\theta_0,\,u_0,\,B_0\right\|_{H^1(\Omega)}$.
\end{theorem}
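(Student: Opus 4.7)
The plan is to obtain the weak solution by a Galerkin scheme using eigenfunctions of the Dirichlet Laplacian (for $\theta$) and of the Stokes operator (for the solenoidal fields $u$ and $B$), so that the approximate solutions $(\theta^n, u^n, B^n)$ automatically satisfy the solenoidal constraint and the homogeneous boundary conditions. Local-in-time solvability of the projected ODE system follows from Cauchy--Lipschitz, and global extension is furnished by the uniform a priori estimates described below; no smoothing of $\kappa, \mu, \sigma$ is required since only measurable coefficients enter the weak formulation.

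The decisive mechanism behind the exponential decay in \eqref{u-B-estimate} is an exact cancellation at the basic energy level. Testing the $\theta$, $u$, $B$ equations of \eqref{B-MHD-equ} by $\theta$, $u$, $B$ respectively and summing, the stratification term $-\int_\Omega u_2\theta\dd x$ cancels the buoyancy work $\int_\Omega \theta u_2\dd x$, while---after an integration by parts using $\nabla\cdot u=\nabla\cdot B=0$ and the no-slip/no-penetration boundary conditions---the Lorentz work $\int_\Omega (B\cdot\nabla B)\cdot u\dd x$ cancels the induction work $\int_\Omega (B\cdot\nabla u)\cdot B\dd x$, and the magnetic dissipation rewrites as $\int_\Omega\sigma(\theta)J^2\dd x$ using $B|_{\partial\Omega}=0$. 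Combined with the coercivity \eqref{Assumption} this yields
\begin{equation*}
\frac{1}{2}\frac{d}{dt}\|(\theta,u,B)\|_{L^2}^2 + C_0^{-1}\|\nabla(\theta,u,B)\|_{L^2}^2 \le 0,
\end{equation*}
and the Poincar\'e inequality on the bounded $\Omega$ converts this into exponential decay of the $L^2$ energy at rate $2C_0^{-1}\lambda_1$, where $\lambda_1$ is the first Dirichlet eigenvalue.

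For the $H^1$ estimate I would test the equations by $-\Delta\theta$, $-\mathbb{P}\Delta u$, and $-\Delta B$, where $\mathbb{P}$ denotes the Leray projector (which annihilates $\nabla\Pi$). Expanding $\nabla\cdot(\kappa(\theta)\nabla\theta)=\kappa(\theta)\Delta\theta+\kappa'(\theta)|\nabla\theta|^2$ and analogously for $\mu,\sigma$, the assumption \eqref{Assumption} produces positive $\|\Delta\theta\|_{L^2}^2, \|\Delta u\|_{L^2}^2, \|\Delta B\|_{L^2}^2$ contributions. The remaining items---the convective, Lorentz, and induction terms, together with the variable-coefficient corrections $\kappa'(\theta)|\nabla\theta|^2$, $\mu'(\theta)\nabla\theta\cdot\nabla u$, $\sigma'(\theta)\nabla\theta\cdot J$---are controlled via the two-dimensional Gagliardo--Nirenberg inequalities $\|f\|_{L^4}^2\lesssim \|f\|_{L^2}\|\nabla f\|_{L^2}$ and $\|\nabla f\|_{L^4}^2\lesssim \|\nabla f\|_{L^2}\|\Delta f\|_{L^2}$, and absorbed into the dissipation with small constants. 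This yields a differential inequality of the form
\begin{equation*}
\frac{d}{dt}\|\nabla(\theta,u,B)\|_{L^2}^2 + c\|\Delta(\theta,u,B)\|_{L^2}^2 \le C\bigl(1+\|\nabla(\theta,u,B)\|_{L^2}^2\bigr)\|\nabla(\theta,u,B)\|_{L^2}^2.
\end{equation*}
Since the basic energy estimate already gives $\int_0^{\infty}\|\nabla(\theta,u,B)\|_{L^2}^2\dd t<\infty$ with exponentially decaying integrand, a two-step Gronwall argument propagates this decay to the $H^1$ level with a (possibly smaller) exponential rate $\alpha$.

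The uniform bounds together with the Aubin--Lions lemma (time derivatives controlled in $L^2_t H^{-1}_x$ directly from the equations) yield strong $L^2$ convergence of $(\theta^n, u^n, B^n)$, enough to pass to the limit in all nonlinear terms and recover a weak solution. The main obstacle I anticipate lies in the $H^1$-level control of the quadratic-in-first-derivatives nonlinearities $\kappa'(\theta)|\nabla\theta|^2$, $\mu'(\theta)\nabla\theta\cdot\nabla u$ and $\sigma'(\theta)\nabla\theta\cdot J$: their treatment requires carefully balanced 2D interpolation so that the dissipation guaranteed by \eqref{Assumption} suffices to absorb them, without at any stage breaking the delicate buoyancy/stratification and Lorentz/induction cancellations on which the exponential decay rests.
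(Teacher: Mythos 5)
Your basic energy identity and its cancellations (stratification vs.\ buoyancy, Lorentz vs.\ induction) are exactly the paper's Lemma \ref{L31}, and your Galerkin construction is a reasonable substitute for the paper's Schauder/mollification scheme. However, there are two genuine gaps at the $H^1$ level. First, you never establish a pointwise bound on $\theta$. The coefficients $\kappa,\mu,\sigma$ are only assumed smooth and bounded \emph{below} by $C_0^{-1}$; nothing bounds $\kappa'(\theta)$, $\mu'(\theta)$, $\sigma'(\theta)$ from above unless $\theta$ is confined to a compact set. The paper devotes Lemma \ref{L32} to proving $\|\theta(\cdot,t)\|_{L^\infty(\Omega)}\leq C$ (via $L^p$ estimates with $p\to\infty$, using $\frac{d}{dt}\|\theta\|_{L^p}\leq\|u\|_{L^p}$ and the exponentially weighted integrability of $\|u\|_{H^1}$), and only then is the constant $M$ in \eqref{M} finite. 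Without this step every one of your variable-coefficient corrections is uncontrolled.

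Second, and more seriously, the obstacle you flag at the end is not resolvable by ``carefully balanced 2D interpolation.'' Testing the temperature equation with $-\Delta\theta$ produces $\int_\Omega\kappa'(\theta)|\nabla\theta|^2\Delta\theta\,dx\leq M\|\nabla\theta\|_{L^4}^2\|\Delta\theta\|_{L^2}\lesssim M\|\nabla\theta\|_{L^2}\|\Delta\theta\|_{L^2}^2$, i.e.\ the \emph{full} power of the dissipation multiplied by $M\|\nabla\theta\|_{L^2}$, which is not small since the theorem imposes no smallness on the data. This term cannot be absorbed into $C_0^{-1}\|\Delta\theta\|_{L^2}^2$, and Gronwall does not apply because the dangerous factor multiplies $\|\Delta\theta\|_{L^2}^2$ rather than $\|\nabla\theta\|_{L^2}^2$. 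The paper's way out is the substitution $\hat{\theta}=\int_0^{\theta}\kappa(z)\,dz$ of \eqref{hatthetadef}, which converts the diffusion into $\kappa(\theta)\Delta\hat{\theta}$ with \emph{no} quadratic gradient term, so that testing with $-\Delta\hat{\theta}$ closes cleanly; the analogous terms $\mu'(\theta)\nabla\theta\cdot\nabla u$ and $\sigma'(\theta)\nabla\theta\cdot J$ in the $u$ and $B$ equations are then harmless precisely because the already-closed temperature estimate supplies $\|\Delta\theta\|_{L^2}^2\in L^1_t$ as a Gronwall coefficient (note the ordering: $\theta$ first, then $u,B$ --- your single combined differential inequality obscures this). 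Without the $\hat{\theta}$ device (or an equivalent one) your $H^1$ estimate for $\theta$, and hence the decay claim \eqref{u-B-estimate}, does not close.
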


If we further impose higher regularity assumption on the initial data, one can then get the strong solution with uniqueness. It should be pointed out that in this theorem, there is not any smallness restriction upon the initial data.

\begin{theorem}\label{T2}
Suppose $\Omega\subset\R^2$ be a bounded domain with smooth boundary, $(\theta_0,\,u_0,\,B_0)$\\
$\in H^2(\Omega)$ are vector fields such that (\ref{eq3}) and (\ref{eq4}) hold. Then the system \eqref{B-MHD-equ}-\eqref{B-MHD-boundary-B} has a unique global strong solution $(\theta,\,u,\,b)$ which satisfies
\beno
(\theta_t,\,u_t,\,B_t)\in C(0,T;L^{2}(\Omega))\cap L^2(0,T;H_0^{1}(\Omega))
\eeno
and
\beno
(\theta,\,u,\,B)\in C(0,T;H^{2}(\Omega))\cap L^2(0,T;H^{3}(\Omega))
\eeno
for any $T>0$. In addition, the corresponding solution has the exponential decay rate
\begin{equation}\label{u-B-estimate1}
\left\|\theta(\cdot,t),\,u(\cdot,t),\,B(\cdot,t)\right\|_{H^2(\Omega)}^2\leq Ce^{-\alpha t},~~~\forall~ t\geq 0,
\end{equation}
where $C$ and $\alpha$ are the constants depending on $C_0$, $\Omega$ and $\left\|\theta_0,\,u_0,\,B_0\right\|_{H^2(\Omega)}$.
\end{theorem}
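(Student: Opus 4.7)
The plan is to build the strong solution on top of the global weak solution already produced by Theorem \ref{T1}, by propagating the $H^2$ regularity in time via energy estimates on time-differentiated equations combined with elliptic regularity. The compatibility conditions \eqref{eq3}--\eqref{eq4} are precisely what is needed so that $(\theta_t,u_t,B_t)|_{t=0}$, computed formally from the PDE, belong to $L^2(\Omega)$ with zero trace; without them, the boundary layer of the time derivative at $t=0$ would obstruct any $L^2_t H^1_x$ bound on $(\theta_t,u_t,B_t)$. I would proceed by a Galerkin scheme (or, equivalently, a Friedrichs smoothing/iteration scheme) whose basis respects the divergence-free and zero-trace conditions, and establish \emph{a priori} estimates at the level of the approximation that pass to the limit. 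The $H^1$ exponential decay provided by Theorem \ref{T1} will be the workhorse that drives the higher-order exponential decay at the end.

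The core step is the energy estimate on the time-differentiated system. Differentiating each equation of \eqref{B-MHD-equ} in $t$ and testing with $\theta_t$, $u_t$, $B_t$ respectively, assumption \eqref{Assumption} produces the coercive terms $C_0^{-1}\bigl(\|\nabla\theta_t\|_{L^2}^2+\|\nabla u_t\|_{L^2}^2+\|\nabla B_t\|_{L^2}^2\bigr)$, while the problematic contributions split into two types: (i) transport-like terms such as $\int u_t\cdot\nabla\theta\,\theta_t$, and (ii) coefficient-derivative terms such as $\int\kappa'(\theta)\theta_t\nabla\theta\cdot\nabla\theta_t$, plus the magnetic coupling $\int(B_t\cdot\nabla B+B\cdot\nabla B_t)\cdot u_t$ and similar. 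All of these can be controlled by the 2D Ladyzhenskaya/Gagliardo--Nirenberg inequality $\|f\|_{L^4}^2\le C\|f\|_{L^2}\|\nabla f\|_{L^2}$ together with Young's inequality, absorbing $\|\nabla\theta_t\|_{L^2}^2$, $\|\nabla u_t\|_{L^2}^2$, $\|\nabla B_t\|_{L^2}^2$ on the left and leaving a Gronwall-type factor that is controlled by the already-established energy $\|(\theta,u,B)\|_{H^1}$ from Theorem \ref{T1}. This yields $(\theta_t,u_t,B_t)\in L^\infty_t L^2_x\cap L^2_t H^1_x$. Viewing then each PDE at a fixed time as a linear elliptic problem with variable coefficient ($-\dv(\mu(\theta)\nabla u)=F$ with $F=-u_t-u\cdot\nabla u+B\cdot\nabla B+\theta e_2-\nabla\Pi$, and analogously for $\theta$ and $B$, where the third equation uses the identity $-\nabla^\perp(\sigma(\theta)\nabla^\perp\cdot B)=-\dv(\sigma(\theta)\nabla B)+\text{lower order}$ modulo the divergence-free constraint), standard $H^2$ and $H^3$ elliptic regularity with smooth boundary upgrade the spatial regularity to $(\theta,u,B)\in L^\infty_t H^2_x\cap L^2_t H^3_x$.

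Uniqueness is then standard: for two strong solutions with identical data, the difference satisfies a linear system whose coefficient differences $\kappa(\theta_1)-\kappa(\theta_2)$, $\mu(\theta_1)-\mu(\theta_2)$, $\sigma(\theta_1)-\sigma(\theta_2)$ are controlled via the mean value theorem and the $L^\infty_{t,x}$ bound on $\theta_i$ coming from $H^2\hookrightarrow L^\infty$ in 2D, so an $L^2$ energy estimate closes by Gronwall. Finally, the exponential decay \eqref{u-B-estimate1} follows by interpolation: having shown $\|(\theta,u,B)(\cdot,t)\|_{H^3}\le M$ uniformly in $t$ (by repeating the time-derivative energy estimate on $[t,t+1]$ and using elliptic regularity, with the Gronwall factor bounded because $\|(\theta,u,B)\|_{H^1}$ decays), the estimate \eqref{u-B-estimate} of Theorem \ref{T1} and $\|f\|_{H^2}^2\le C\|f\|_{H^1}\|f\|_{H^3}$ give $\|(\theta,u,B)(\cdot,t)\|_{H^2}^2\le CMe^{-\alpha t/2}$. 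The main obstacle I expect is the time-differentiated energy estimate at the level of $(\theta_t,u_t,B_t)$: the variable coefficients $\mu(\theta)$, $\kappa(\theta)$, $\sigma(\theta)$ produce cross terms mixing $\theta_t$ with $\nabla u$, $\nabla B$, $\nabla\theta$, and one must be careful to distribute these via Ladyzhenskaya's inequality so that the diffusion absorbs them without forcing a smallness assumption on the initial data.
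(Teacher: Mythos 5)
Your existence and uniqueness arguments follow essentially the same route as the paper: differentiate the system in time, test with $(\theta_t,u_t,B_t)$, close a Gronwall loop using the $H^1$-level estimates of Theorem \ref{T1} (more precisely, you also need the $L^2$-in-time $H^2$ integrability of $(\theta,u,B)$ and the weighted $L^2$-in-time bound on $(u_t,B_t)$ from Corollary \ref{thetatesti}, not merely the $L^\infty_tH^1_x$ decay), and then recover spatial regularity from the variable-coefficient elliptic and Stokes estimates (Lemmas \ref{elliptic}--\ref{stokes2} and Corollary \ref{stokes3}), with the $B$-equation rewritten in non-divergence form exactly as you indicate. The uniqueness proof via the mean value theorem applied to the coefficient differences and an $L^2$ Gronwall estimate is also what the paper does.

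The genuine gap is in your derivation of the exponential decay \eqref{u-B-estimate1}. You propose to prove a \emph{uniform-in-time} bound $\|(\theta,u,B)(\cdot,t)\|_{H^3}\le M$ and then interpolate against the $H^1$ decay. But the estimates you sketch only give $(\theta_t,u_t,B_t)\in L^2_tH^1_x$, hence --- through the $H^3$ elliptic estimates, whose right-hand sides contain $\|\theta_t\|_{H^1}$, $\|u_t\|_{H^1}$, $\|B_t\|_{H^1}$ --- only an $L^2$-in-time bound on $\|(\theta,u,B)\|_{H^3}$, not a pointwise one. Upgrading to $L^\infty_tH^3_x$ would require controlling $\nabla(\theta_t,u_t,B_t)$ in $L^\infty_tL^2_x$, i.e.\ a second time differentiation or a parabolic smoothing argument with a time weight on $[t,t+1]$; this is nontrivial extra work that you do not carry out. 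The detour is also unnecessary: once the weighted Gronwall estimate gives $\|(\theta_t,u_t,B_t)(t)\|_{L^2}^2\le Ce^{-\alpha t}$, the very same elliptic and Stokes estimates you invoke for regularity express $\|\nabla^2\theta\|_{L^2}$, $\|\nabla^2u\|_{L^2}$, $\|\nabla^2B\|_{L^2}$ in terms of $\|\theta_t\|_{L^2}$, $\|u_t\|_{L^2}$, $\|B_t\|_{L^2}$ plus lower-order quantities that already decay by Theorem \ref{T1}; this yields \eqref{u-B-estimate1} directly (and with rate $e^{-\alpha t}$ rather than the $e^{-\alpha t/2}$ your interpolation would give), which is precisely how the paper proceeds in Lemmas \ref{L35} and \ref{L36}.
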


\begin{remark}\label{rmk-1.1}
Theorem 1.2 also implies that for the Boussinesq system addressed in \cite{SunZhang}, the decay rate of resulting solution is $\|\theta(\cdot,t),\,u(\cdot,t)\|_{H^2(\Omega)}^2\leq Ce^{-\alpha t}$ for any $t\geq 0$.
\end{remark}

\begin{remark}\label{rmk-1.2}
When the Dirichlet boundary condition $B|_{\p\Omega}=0$ be replaced by $ B\cdot {\bm n}|_{\partial \Omega}=\nabla^{\perp}\cdot B|_{\partial \Omega}=0$ (the perfectly conducting wall condition), the results in Theorems \ref{T1} and \ref{T2} still hold.
\end{remark}

\begin{remark}\label{rmk-1.3}
If one takes the adiabetic boundary condition $\partial_n\theta|_{\partial \Omega}=0$ instead of $\theta|_{\p\Omega}=0$, the contents in Theorems \ref{T1} and \ref{T2} hold except for the $L^2$ decay rate of temperature $\|\theta(\cdot,t)\|_{L^2(\Omega)}^2\leq Ce^{-\alpha t}$.
\end{remark}

The proof of main theorems is divided into two main steps. The first step is to establish the global
existence of weak solutions which are defined as follows.
\begin{definition}\label{weak}
Suppose $(\theta_0,u_0,B_0,)\in L^2(\Omega)$, a pair of measurable vector field $\theta(x,t),\,u(x,t)$ and $B(x,t)$  is called a weak solution of \eqref{B-MHD-equ}-\eqref{B-MHD-boundary-B} if
\beno
(1)&&(\theta(x,t),\,u(x,t),\,B(x,t))\in C(0,T;L^2(\Omega))\cap L^2(0,T;H^1(\Omega));\\
(2)&&\int_{\Omega}\theta_0\phi_0dx+\int_0^T\int_{\Omega}\big(\theta\phi_t+u\cdot\nabla\phi\theta-\kappa(\theta)\nabla\theta\cdot\nabla\phi-u_2\phi\big)dxdt=0,\\
&&\int_{\Omega}u_0\cdot\psi_0dx+\int_0^T\int_{\Omega}\big()u\cdot\psi_t+u\cdot\nabla\psi\cdot u-B\cdot\nabla\psi\cdot B-\mu(\theta)\nabla u\cdot\nabla \psi\big)dxdt\\
&&=\int_0^T\int_{\Omega}\theta e_2\cdot\psi\,dxdt,\\
&&\int_{\Omega}B_0\cdot\varphi_0dx+\int_0^T\int_{\Omega}\big(B\cdot\varphi_t+u\cdot\nabla\varphi\cdot B-\sigma(\theta)J\grad^{\perp}\cdot \varphi\big)dxdt\\
&&=\int_0^T\int_{\Omega}B\cdot\nabla\varphi\cdot u\,dxdt
\eeno
holds for any $(\phi,\,\psi,\,\varphi)\in C^\infty(\Omega\times[0,T])$ with $\phi|_{\p\Omega}=\phi(x,T)=0$, $\nabla\cdot\psi=\psi|_{\p\Omega}=\psi(x,T)=0$ and $\nabla\cdot\varphi=\varphi|_{\p\Omega}=\varphi(x,T)=0$.
\end{definition}
\begin{remark}\label{gw1}
Following standard arguments as in the theory of the Navier-Stokes equations (see e.g., \cite{RT}), it is clear that the above system is equivalent to the system
\ben\label{w4}
\f{d}{dt}<\theta,\phi>+(\kappa(\theta)\nabla\theta,\nabla\phi)+b(u,\theta,\phi)=-(u_2,\phi),
\een
\ben\label{w5}
\f{d}{dt}<u,\psi>+(\mu(\theta)\nabla u,\nabla\psi)+b(u,u,\psi)=b(B,B,\psi)+(\theta e_2,\psi),
\een
\ben\label{w6}
\f{d}{dt}<B,\varphi>+(\sigma(\theta)\grad^{\perp}\cdot B,\grad^{\perp}\cdot\varphi)+b(u,B,\varphi)=b(B,u,\varphi)
\een
for any $\phi\in L^2(0,T; H_0^1(\Omega))$ and $\psi,\,\varphi\in L^2(0,T; V)$.
\end{remark}

The second step is to build up the higher estimates and the uniqueness of solution by a priori estimates under the initial and boundary conditions \eqref{initial}-\eqref{B-MHD-boundary-B}. More precisely, we will do the $L^\infty(0,T;H^{2}(\Omega))$ estimates of temperature, velocity field and magnetic field for any $T>0$. Due to the strong coupling in the nonlinearities and the boundary effects, there are not enough spatial derivatives of the solution at the boundary. To solve it,
we will make full use of the Sobolev embeddings and classical regularity results of elliptic equations to obtain the estimates of high-order spatial derivatives, which is distinguished from the Cauchy problem in \cite{Bian-Gui}. Our energy estimates is somewhat delicate. In the end, we got the the desired estimates which lead to the global regularity and uniqueness of solution.

This paper is organized as follows. In section 2, we introduce some useful Propositions and Lemmas of this paper. In section 3, we will concentrate on the global weak solution (i.e., the proof of Theorem \ref{T1}). Section 4 is devoted to the global strong solution (i.e., the proof of Theorem \ref{T2}).

\section{Preliminary}\hspace*{\parindent}
\subsection{Notations}\hspace*{\parindent}
In this section, we will give some Propositions and Lemmas which will be used to prove Theorem \ref{T1}. Initially, we define the inner products on $L^2(\Omega)$ and space $V$ by
$$(u,v)=\sum\limits_{i=1}^2\int_{\R^2}u_iv_i\,dx,$$
and
$$V=\{u\in H_0^1(\Omega): \nabla\cdot u = 0\,\,\hbox{in}\,\,\Omega\},$$
respectively. Then, we will denote by $V'$ the dual space of $V$ and the action of $V'$ on $V$ by $<\cdot\,,\,\cdot>$. Moreover, we use the following notation for the trilinear continuous form by setting
\ben\label{b0}
b(u,v,w)=\sum\limits_{i,j=1}^2\int_{\Omega}u_i\p_iv_jw_j\,dx.
\een
If $u\in\,V$, then
\ben\label{b1}
b(u,v,w)=-b(u,w,v),\,\,\,\forall\,\,v,w\in H_0^1(\Omega),
\een
and
\ben\label{b2}
b(u,v,v)=0,\,\,\,\forall\,\,v\in H_0^1(\Omega).
\een

The following one to be introduced is the well known Gagliardo-Nirenberg interpolation inequality.
\begin{proposition}\label{P1}{\cite{NIR}}
Let $f(x)$ be a function defined on a bounded domain $\Omega\subset\R^n$ with smooth boundary, fix $1\leq q, r \leq\infty $ and a natural number m. Suppose also that a real number $\alpha$ and a natural number $j$ are such that
$$\frac{1}{p} = \frac{j}{2} + \left( \frac{1}{r} - \frac{m}{n} \right) \alpha + \frac{1 - \alpha}{q},$$
and
$$\frac{j}{m} \leq \alpha \leq 1,$$
then there holds that
$$\| \mathrm{D}^{j} f \|_{L^{p}(\Omega)} \leq C_{1} \| \mathrm{D}^{m} f \|_{L^{r}(\Omega)}^{\alpha} \| f \|_{L^{q}(\Omega)}^{1 - \alpha} + C_{2} \| f \|_{L^{s}(\Omega)},$$
where $s > 0$ is arbitrary and the constants $C_1$ and $C_2$ depend upon $\Omega,\,m,\,j,\,s$ only.
\end{proposition}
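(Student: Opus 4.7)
The statement is the classical Gagliardo--Nirenberg interpolation inequality on a bounded domain, and the plan is to reduce it to the whole-space version and then establish the whole-space version by a combination of Sobolev embedding, H\"older interpolation, and an induction on the derivative order $j$.

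\textbf{Step 1 (reduction to $\R^n$ via extension).} Since $\Omega\subset\R^n$ has smooth boundary, there exists a bounded linear extension operator $E:W^{m,p}(\Omega)\to W^{m,p}(\R^n)$ for every $1\le p\le\infty$ and every integer $m$, with support of $Ef$ contained in a fixed neighborhood $\widetilde\Omega\Subset\R^n$ of $\Omega$. I would apply this to $f$ and obtain $g=Ef$ with $\|D^k g\|_{L^p(\R^n)}\le C(\Omega,k,p)\|f\|_{W^{k,p}(\Omega)}$ for $0\le k\le m$, $p\in\{r,q\}$. Then, once the inequality is established on $\R^n$ in the form
\[
\|D^j g\|_{L^p(\R^n)}\le C\,\|D^m g\|_{L^r(\R^n)}^{\alpha}\,\|g\|_{L^q(\R^n)}^{1-\alpha},
\]
restricting to $\Omega$ gives the bound by $\|D^m f\|_{L^r(\Omega)}^{\alpha}\|f\|_{L^q(\Omega)}^{1-\alpha}$ up to extra terms of the form $\|D^k f\|_{L^p(\Omega)}$ for $0<k<m$ coming from the extension operator. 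These lower-order terms will ultimately be absorbed into $C_1\|D^m f\|_{L^r}^\alpha\|f\|_{L^q}^{1-\alpha}+C_2\|f\|_{L^s}$ after a standard interpolation--Young argument, which accounts for the presence of the additive $\|f\|_{L^s(\Omega)}$ term in the statement.

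\textbf{Step 2 (base case on $\R^n$).} On $\R^n$, the sharpest approach is to establish first the base case $j=0$, $m=1$: for $\tfrac1p=\alpha(\tfrac1r-\tfrac1n)+\tfrac{1-\alpha}{q}$ with $0\le\alpha\le 1$,
\[
\|g\|_{L^p(\R^n)}\le C\,\|\nabla g\|_{L^r(\R^n)}^{\alpha}\,\|g\|_{L^q(\R^n)}^{1-\alpha}.
\]
For $\alpha=1$ this is the Sobolev embedding $W^{1,r}\hookrightarrow L^{r^*}$ with $r^*=nr/(n-r)$ (when $r<n$), proved via the Hardy--Littlewood--Sobolev inequality applied to the Riesz potential representation $g=I_1\ast(-\Delta)^{1/2}g$; the endpoint $r=n$ and $r>n$ cases are handled by Morrey/Trudinger-type estimates, and only the finite $p$ regime is needed here. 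For intermediate $\alpha\in(0,1)$ one interpolates between the endpoint $\alpha=1$ inequality and the trivial $\alpha=0$ inequality $\|g\|_{L^q}\le\|g\|_{L^q}$ using H\"older's inequality in the form $\|g\|_{L^p}\le\|g\|_{L^{r^*}}^\alpha\|g\|_{L^q}^{1-\alpha}$ whenever $\tfrac1p=\tfrac{\alpha}{r^*}+\tfrac{1-\alpha}{q}$.

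\textbf{Step 3 (induction on $j$ for general $m$).} For $j\ge1$ and $m\ge j+1$, I would proceed by induction on $m-j$. Writing $\|D^j g\|_{L^p}=\|\nabla(D^{j-1}g)\|_{L^p}$ and applying the base case of Step 2 to $D^{j-1}g$ gives an estimate of $\|D^j g\|_{L^p}$ in terms of $\|D^j g\|_{L^{\tilde r}}^{\tilde\alpha}\|D^{j-1}g\|_{L^{\tilde q}}^{1-\tilde\alpha}$ for appropriate exponents; iterating with the base case applied to $D^{j-1}g$, $D^{j-2}g,\dots$, and choosing the exponents so that the scaling condition $\tfrac1p=\tfrac{j}{n}+\alpha(\tfrac1r-\tfrac mn)+\tfrac{1-\alpha}{q}$ closes at the desired level, one arrives at the full inequality. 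At each step, Young's inequality with $\varepsilon$ is used to absorb mixed terms $\|D^k g\|_{L^{p_k}}$ with $0<k<m$ into a multiple of $\|D^m g\|_{L^r}^\alpha\|g\|_{L^q}^{1-\alpha}$ plus the remainder $\|g\|_{L^s}$, using the constraint $j/m\le\alpha\le 1$ to ensure the Young exponents are admissible.

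\textbf{Main obstacle.} The delicate point is the bookkeeping of exponents along the induction, especially near the endpoint $\alpha=j/m$, and the absorption argument that produces the $C_2\|f\|_{L^s(\Omega)}$ term on a bounded domain (where Poincar\'e-type inequalities are not free, since $f$ need not vanish on $\p\Omega$). Since the result is classical and stated here only as a tool, I would not reprove it in full detail but instead cite Nirenberg's original paper and Friedman's treatment for the bounded-domain version.
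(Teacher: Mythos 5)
The paper offers no proof of this proposition at all: it is quoted verbatim as a classical result with a citation to Nirenberg, so the ``paper's approach'' is simply to defer to the literature, which is exactly where your sketch also lands. Your outline (extension to $\R^n$, Sobolev embedding plus H\"older for the base case, induction on the derivative order with absorption of the lower-order terms into the $C_2\|f\|_{L^s(\Omega)}$ remainder) is a correct summary of the standard argument, and you even use the correct scaling relation $\tfrac1p=\tfrac jn+\alpha(\tfrac1r-\tfrac mn)+\tfrac{1-\alpha}q$ where the paper's displayed statement contains a typo ($\tfrac j2$ in place of $\tfrac jn$).
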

By inputting $n=2$ and $p=4,\infty$ separately, it is clear to derive the following corollary.
\begin{corollary}\label{C1}
Suppose $\Omega\subset\R^2$ be a bounded domain with smooth boundary. Then

(1)\,\,\,$\| f \|_{L^{4}(\Omega)} \leq C\, (\|  f \|_{L^{2}(\Omega)}^{\f12} \| \nabla f \|_{L^{2}(\Omega)}^{\f12} + \| f \|_{L^{2}(\Omega)}),\,\,\,\forall f\in H^1(\Omega);$\vskip 0.2cm

(2)\,\,\,$\| \nabla f \|_{L^{4}(\Omega)} \leq C\, (\|  f \|_{L^{2}(\Omega)}^{\f14} \| \nabla^2 f \|_{L^{2}(\Omega)}^{\f34} + \| f \|_{L^{2}(\Omega)}),\,\,\,\forall f\in H^2(\Omega);$\vskip 0.2cm

(3)\,\,\,$\| f \|_{L^{\infty}(\Omega)} \leq C\, (\|  f \|_{L^{2}(\Omega)}^{\f12} \| \nabla^2 f \|_{L^{2}(\Omega)}^{\f12} + \| f \|_{L^{2}(\Omega)}),\,\,\,\forall f\in H^2(\Omega);$\vskip 0.2cm

(4)\,\,\,$\| f \|_{L^{\infty}(\Omega)} \leq C\, (\|  f \|_{L^{2}(\Omega)}^{\f23} \| \nabla^3 f \|_{L^{2}(\Omega)}^{\f13} + \| f \|_{L^{2}(\Omega)}),\,\,\,\forall f\in H^3(\Omega).$
\end{corollary}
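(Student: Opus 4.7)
The plan is to derive each of the four inequalities in the corollary by directly invoking Proposition \ref{P1} with a tailored choice of the parameters $(j,m,p,q,r,\alpha)$, specialized to $n=2$, and then absorbing the auxiliary $\|f\|_{L^s}$ term by picking $s=2$ (which is admissible since $\Omega$ is bounded, so every $L^q$ norm with $q>0$ is controlled by the $L^2$ norm only up to a constant when we need it, and the generic $L^s$ term is just a lower-order remainder built into the statement of Proposition \ref{P1}).

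For item (1), I would set $p=4$, $j=0$, $m=1$, $q=r=2$, so that the scaling relation $\frac{1}{p} = \frac{j}{2} + (\frac{1}{r} - \frac{m}{n})\alpha + \frac{1-\alpha}{q}$ collapses to $\frac{1}{4} = \frac{1-\alpha}{2}$, giving $\alpha = \frac{1}{2}$, which satisfies $\frac{j}{m} = 0 \leq \alpha \leq 1$. Item (2) is obtained by taking $p=4$, $j=1$, $m=2$, $q=r=2$; the scaling equation yields $\frac{1}{4}=1-\alpha$, hence $\alpha = \frac{3}{4}\in [\frac{1}{2},1]$. For item (3), I set $p=\infty$, $j=0$, $m=2$, $q=r=2$, and then $0=-\frac{\alpha}{2}+\frac{1-\alpha}{2}$ forces $\alpha = \frac{1}{2}\in [0,1]$. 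Finally for item (4), taking $p=\infty$, $j=0$, $m=3$, $q=r=2$, the scaling condition gives $0=-\alpha + \frac{1-\alpha}{2}$, i.e.\ $\alpha = \frac{1}{3}\in [0,1]$. In each case the side condition $\frac{j}{m}\leq \alpha\leq 1$ is immediate.

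Feeding these parameters into Proposition \ref{P1} delivers exactly the four estimates in the statement, with the universal additive remainder $C_2\|f\|_{L^s(\Omega)}$ for some fixed $s>0$; choosing $s=2$ throughout produces the $+C\|f\|_{L^2(\Omega)}$ lower-order term displayed on the right-hand side of each inequality.

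There is no genuine obstacle here beyond bookkeeping: the only thing to check is that the exponent equation and the admissibility range for $\alpha$ are both satisfied for each selection, and that the regularity assumption on $f$ in each item ($H^1$, $H^2$, or $H^3$) is sufficient to give meaning to $\|\mathrm{D}^m f\|_{L^2}$ on the right. Since $\Omega$ is a bounded domain with smooth boundary, Proposition \ref{P1} applies verbatim in all four cases, and no additional approximation or extension argument is needed.
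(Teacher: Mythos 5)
Your proposal is correct and coincides with the paper's own (one-line) derivation: the paper likewise obtains all four estimates by substituting $n=2$ with $p=4$ or $p=\infty$ into Proposition \ref{P1}, and your parameter choices $(j,m,q,r,\alpha)=(0,1,2,2,\tfrac12)$, $(1,2,2,2,\tfrac34)$, $(0,2,2,2,\tfrac12)$, $(0,3,2,2,\tfrac13)$ all satisfy the scaling relation and the constraint $\tfrac{j}{m}\leq\alpha\leq 1$. Taking $s=2$ in the remainder term then gives exactly the stated inequalities.
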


Then, let us recall some classical results which can be found in the cited reference.
\begin{lemma}\label{elliptic}\cite{evans,GT}
Let $\Omega\subset\R^2$ be a bounded domain with smooth boundary and consider the elliptic boundary-value problem
\begin{equation}\label{eel}
\left\{\begin{array}{ll}
-\Delta f=g\quad&\hbox{in}\,\,\Omega,\\
f=0\quad&\hbox{on}\,\,\p{\Omega}.
\end{array}\right.
\end{equation}
Then for any $p\in(1,\infty)$, integers $m\geq -1$ and $g\in W^{m,p}(\Omega)$,  (\ref{eel}) has a unique solution $f$ satisfying
$$
\|f\|_{W^{m+2,p}(\Omega)}\leq C\|g\|_{W^{m,p}(\Omega)},
$$
where $C$ depending only on $\Omega, \,m$ and $p$.
\end{lemma}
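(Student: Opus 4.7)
The statement is the classical $L^p$ (Calderón--Zygmund) regularity result for the Dirichlet Laplacian on a smooth bounded planar domain, so my plan is to indicate the standard route rather than invent a new argument. I would split the proof into three pieces: existence/uniqueness in the weakest scale $m=-1$, the base case $m=0$ which gives $W^{2,p}$ regularity for $g\in L^p$, and the bootstrap to $m\geq 1$.

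For existence and uniqueness I would start with the case $p=2$, $m=-1$: the bilinear form $a(f,\varphi)=\int_\Omega \nabla f\cdot\nabla\varphi\,dx$ is continuous and coercive on $H_0^1(\Omega)$ (by Poincar\'e), so Lax--Milgram yields a unique weak solution $f\in H_0^1$ with $\|f\|_{H^1}\lesssim \|g\|_{H^{-1}}$. For general $p\in(1,\infty)$, uniqueness follows from the maximum principle (or from duality against the $p'$-problem), and existence for smooth $g$ comes by approximation, combined with the a priori estimate proved in the next step; the case $m=-1$, $p\ne 2$ can then be recovered by a duality argument, testing against solutions of the adjoint problem with $W^{1,p'}$ data.

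The heart of the argument is the a priori bound $\|f\|_{W^{2,p}}\lesssim \|g\|_{L^p}$ for smooth $f$ with $f|_{\partial\Omega}=0$. I would prove it in two localized steps. \emph{Interior estimate}: represent $\chi f$, for $\chi$ compactly supported, via the Newtonian potential and differentiate twice; the resulting second derivatives are Calder\'on--Zygmund singular integrals of $-\Delta(\chi f)=\chi g+[\Delta,\chi]f$, so their $L^p$ norms are controlled by $\|g\|_{L^p}+\|f\|_{W^{1,p}}$, and the lower-order term is absorbed by interpolation and the $H^1$ bound. \emph{Boundary estimate}: cover $\partial\Omega$ by finitely many charts, straighten each piece of the boundary via a $C^\infty$ diffeomorphism, and in the half-space model use the odd reflection across $\{x_2=0\}$ (which preserves the zero boundary condition) to reduce to the interior case. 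The change of variables introduces lower-order perturbation terms with bounded coefficients, which are handled by a standard freezing-of-coefficients plus small-neighborhood argument. Patching the local bounds with a partition of unity yields the global $W^{2,p}$ estimate.

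For $m\geq 1$ I would bootstrap: assuming the estimate for $m-1$, differentiate the PDE by a tangential derivative $\partial_\tau$ near the boundary to obtain $-\Delta(\partial_\tau f)=\partial_\tau g$ with $\partial_\tau f|_{\partial\Omega}=0$, apply the $m=0$ result, and recover normal derivatives algebraically from the equation $\partial_n^2 f=-g-\partial_\tau^2 f+\text{curvature terms}$. The main technical nuisance, as always in this theorem, is the boundary step: keeping track of the lower-order commutators produced by straightening and by tangential differentiation, and absorbing them via interpolation. Since the result is purely classical and used below only as a black box, citing \cite{evans,GT} after sketching this outline is appropriate.
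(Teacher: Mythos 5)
Your outline is a correct account of the standard Calder\'on--Zygmund proof (Lax--Milgram for existence, interior singular-integral estimates, boundary flattening with odd reflection, tangential bootstrap for $m\geq 1$), which is exactly the argument in the cited references. The paper itself offers no proof of this lemma --- it is quoted as a classical black box from \cite{evans,GT} --- so your sketch is consistent with, and more detailed than, what the paper provides.
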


Now we set the coefficient $\mu(x)$ and $\kappa(x)$ be smooth functions satisfying
$$0<C_{\rm min}\leq\mu(x),\kappa(x)\leq C_{\rm max}<\infty.$$
Under this assumption, we then introduce the following four Lemmas.

\begin{lemma}\label{stokes1}\cite{Solonnikov,SunZhang}
Consider the Stokes system with variable coefficient in a bounded smooth domain $\Omega\subset\R^2:$
\beno
\left\{\begin{array}{ll}
-{\rm div}(\mu(x)\nabla u)+\nabla\pi=f\quad&\hbox{in}\,\,\Omega,\\
{\rm div}\,u=0\quad&\hbox{in}\,\,\Omega,\\
u=0\quad&\hbox{on}\,\,\p{\Omega}.
\end{array}\right.
\eeno
Then for any $f\in H^{-1}(\Omega)$, there exists a unique weak solution $(u,\pi)\in H_0^1(\Omega)\times L^2(\Omega)$ with $\int_\Omega\pi(x)dx=0$ satisying $$\|u\|_{H^{1}(\Omega)}+\|\pi\|_{L^{2}(\Omega)}\leq C\|f\|_{H^{-1}(\Omega)},$$
where the constant $C$ depends only on $C_{\rm min},\,C_{\rm max}$ and $\Omega$.
\end{lemma}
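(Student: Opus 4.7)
The plan is to follow the classical Lax--Milgram plus de Rham/Bogovski\u{\i} scheme, adapted trivially to accommodate the variable coefficient $\mu(x)$. Let $V$ denote the divergence-free subspace of $H^1_0(\Omega)$ as in the paper. I would first introduce the bilinear form
\[
a(u,v):=\int_\Omega \mu(x)\nabla u:\nabla v\,dx\qquad\text{on }V\times V.
\]
Continuity is immediate from $\mu\le C_{\rm max}$, and coercivity follows from the lower bound $\mu\ge C_{\rm min}$ combined with the Poincar\'e inequality on $H^1_0(\Omega)$. The linear functional $v\mapsto\langle f,v\rangle$ is bounded on $V$, so Lax--Milgram yields a unique $u\in V$ with $a(u,v)=\langle f,v\rangle$ for every $v\in V$, together with the bound $\|u\|_{H^1}\le C\|f\|_{H^{-1}}$ with $C=C(C_{\rm min},\Omega)$.

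The next step is to recover the pressure. The linear functional
\[
L(v):=a(u,v)-\langle f,v\rangle,\qquad v\in H^1_0(\Omega),
\]
is bounded on $H^1_0(\Omega)$ and vanishes on $V$. By the de Rham/Ne\v{c}as lemma for bounded Lipschitz domains, there exists a unique $\pi\in L^2(\Omega)$ with $\int_\Omega \pi\,dx=0$ such that $L(v)=\int_\Omega \pi\,\dv v\,dx$ for all $v\in H^1_0(\Omega)$; this is exactly the weak form of $-\dv(\mu\nabla u)+\nabla\pi=f$. To quantify $\|\pi\|_{L^2}$, I invoke the Bogovski\u{\i} operator: for any $\pi\in L^2_0(\Omega)$ there exists $w\in H^1_0(\Omega)$ with $\dv w=\pi$ and $\|w\|_{H^1}\le C(\Omega)\|\pi\|_{L^2}$. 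Testing the identity against this $w$ gives
\[
\|\pi\|_{L^2}^2=\int_\Omega \mu\nabla u:\nabla w\,dx-\langle f,w\rangle\le C\bigl(\|u\|_{H^1}+\|f\|_{H^{-1}}\bigr)\|\pi\|_{L^2},
\]
which together with the estimate on $u$ yields $\|\pi\|_{L^2}\le C\|f\|_{H^{-1}}$ as desired.

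Uniqueness is straightforward: if $(u_i,\pi_i)$, $i=1,2$, are two solutions, then $w:=u_1-u_2\in V$ and $q:=\pi_1-\pi_2$ satisfy $-\dv(\mu\nabla w)+\nabla q=0$ with $\int q\,dx=0$. Testing against $w\in V$ kills the pressure and coercivity forces $w\equiv 0$; the Bogovski\u{\i} argument applied to $q$ then gives $q\equiv 0$.

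The only nontrivial point is the pressure recovery, which depends on surjectivity of $\dv:H^1_0(\Omega)\to L^2_0(\Omega)$ on bounded smooth domains; since this is a textbook fact (see Solonnikov and \cite{SunZhang} already cited in the statement), no genuine difficulty arises, and the variable-coefficient nature of $\mu$ plays no role beyond supplying the uniform bounds $C_{\rm min}\le\mu\le C_{\rm max}$ used in coercivity.
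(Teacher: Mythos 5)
Your argument is correct, but there is nothing in the paper to compare it against: Lemma \ref{stokes1} is stated as a quoted result, with the proof deferred entirely to the references \cite{Solonnikov,SunZhang}, so the paper contains no proof of its own. What you have written is the standard self-contained derivation: Lax--Milgram on the divergence-free subspace $V$ (continuity from $\mu\le C_{\rm max}$, coercivity from $\mu\ge C_{\rm min}$ plus Poincar\'e) gives the unique velocity with $\|u\|_{H^1}\le C\|f\|_{H^{-1}}$; the de Rham/Ne\v{c}as lemma recovers a mean-zero pressure from the functional that annihilates $V$; and the surjectivity of $\dv:H_0^1(\Omega)\to L_0^2(\Omega)$ with the Bogovskii bound converts that into the quantitative estimate $\|\pi\|_{L^2}\le C(\|u\|_{H^1}+\|f\|_{H^{-1}})$. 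All hypotheses (bounded smooth, hence Lipschitz, domain; uniform ellipticity of $\mu$) are available, the dependence of the constant on $C_{\rm min}$, $C_{\rm max}$ and $\Omega$ comes out exactly as stated, and the uniqueness argument is sound. This is a perfectly acceptable substitute for the citation, and arguably more informative than the paper's treatment, since it makes explicit that the variable coefficient enters only through the two-sided bounds on $\mu$.
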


\begin{lemma}\label{stokes2}\cite{Solonnikov,SunZhang}
Let $(u,\pi)\in H^2(\Omega)\times H^1(\Omega)$ be a solution of the Stokes system of non-divergence form
\beno
\left\{\begin{array}{ll}
-\mu(x)\Delta u+\nabla\pi=f\quad&\hbox{in}\,\,\Omega,\\
{\rm div}\,u=0\quad&\hbox{in}\,\,\Omega,\\
u=0\quad&\hbox{on}\,\,\p{\Omega}.
\end{array}\right.
\eeno
Then there exists a constant $C=C(C_{\rm min},\,C_{\rm max},\,\Omega)$ such that
$$\|\nabla^2 u\|_{L^{2}(\Omega)}+\|\nabla \pi\|_{L^{2}(\Omega)}\leq C(\|f\|_{L^{2}(\Omega)}+\|u\|_{H^{1}(\Omega)}+\|\pi\|_{L^{2}(\Omega)}).$$
\end{lemma}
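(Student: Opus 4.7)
The approach I would take is to reduce the variable-coefficient Stokes problem to the classical constant-coefficient Stokes system through a pressure substitution, and then to invoke the standard Cattabriga--Solonnikov $H^2$-regularity theory which underlies the reference \cite{Solonnikov}. Since $\mu(x) \geq C_{\min} > 0$, one may divide the momentum equation by $\mu(x)$; introducing the modified pressure $q := \pi/\mu$ and using the product rule $\nabla\pi = \mu\nabla q + q\nabla\mu$, the system rewrites as
\[
-\Delta u + \nabla q = g, \qquad \dv u = 0, \qquad u|_{\p\Omega} = 0,
\]
where $g := f/\mu - q\nabla\mu/\mu$ collects the original forcing together with a lower-order commutator produced by the substitution.

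Having reduced to the constant-coefficient Stokes system with homogeneous Dirichlet data, I would quote the classical $L^2$-regularity (Cattabriga--Solonnikov) on smooth bounded planar domains to conclude
\[
\|\nabla^2 u\|_{L^2(\Omega)} + \|\nabla q\|_{L^2(\Omega)} \leq C_\Omega \|g\|_{L^2(\Omega)},
\]
with $C_\Omega$ depending only on the geometry of $\Omega$. The right-hand side is then controlled through $\|g\|_{L^2}\leq C_{\min}^{-1}\|f\|_{L^2} + C_{\min}^{-1}\|\nabla\mu\|_{L^\infty}\|q\|_{L^2}$ and $\|q\|_{L^2}\leq C_{\min}^{-1}\|\pi\|_{L^2}$, which yields $\|g\|_{L^2}\leq C(\|f\|_{L^2} + \|\pi\|_{L^2})$.

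Converting back to the original pressure through $\nabla\pi = \mu\nabla q + q\nabla\mu$, one obtains $\|\nabla\pi\|_{L^2}\leq C(\|\nabla q\|_{L^2} + \|\pi\|_{L^2})$; combining the two displays gives the estimate $\|\nabla^2 u\|_{L^2}+\|\nabla\pi\|_{L^2}\leq C(\|f\|_{L^2}+\|\pi\|_{L^2})$, which in particular implies the stated inequality. The $\|u\|_{H^1(\Omega)}$ term appearing on the right-hand side of the lemma is in fact unnecessary along this route; it is retained by the authors as a harmless slack term convenient in the later nonlinear estimates.

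The principal technical obstacle does not lie in the reduction but inside the constant-coefficient step: the Cattabriga--Solonnikov theorem itself requires boundary cut-offs, flattening of $\p\Omega$, and tangential difference quotients (with the normal second derivatives of $u$ afterwards recovered from the equation together with $\dv u = 0$). A secondary subtlety is that the clean constant $C(C_{\min},C_{\max},\Omega)$ advertised in the statement hides a dependence on $\|\nabla\mu\|_{L^\infty}$ coming from the commutator $q\nabla\mu/\mu$; to strip this dependence one would instead run a frozen-coefficient argument, covering $\Omega$ by small balls on which $\mu$ is nearly constant, applying the constant-coefficient estimate locally, and patching via a partition of unity, at the expense of additional bookkeeping.
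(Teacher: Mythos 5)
The paper never proves this lemma: it is quoted directly from \cite{Solonnikov,SunZhang}, so there is no internal argument to measure yours against, and your reduction via the modified pressure $q=\pi/\mu$ is a legitimate, standard route. The algebra is correct ($-\Delta u+\nabla q=f/\mu-q\nabla\mu/\mu$, with $\operatorname{div}u=0$ and the Dirichlet condition untouched), the Cattabriga--Solonnikov estimate applies to $(u,q)$ by uniqueness of the constant-coefficient Stokes problem, and your bookkeeping does return the stated inequality, indeed without needing the $\|u\|_{H^1(\Omega)}$ term.

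The one substantive issue is the one you flag yourself, and it deserves to be taken more seriously than a ``secondary subtlety'': your constant depends on $\|\nabla\mu\|_{L^\infty(\Omega)}$, and this matters for how the lemma is used. In Lemma 4.2 the authors apply it with $\mu=\mu(\theta(\cdot,t))$, where $\nabla\mu=\mu'(\theta)\nabla\theta$ is controlled uniformly in time only in $L^r(\Omega)$ for finite $r$ (through $\|\theta(t)\|_{H^2(\Omega)}$), not in $L^\infty$; a constant depending on $\|\nabla\mu\|_{L^\infty}$ would break that step. Your proposed repair by freezing coefficients does not fully remove the dependence either, since the patch radius, and hence the number of charts and the final constant, is dictated by the modulus of continuity of $\mu$. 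A fix that stays inside your framework and suffices for the application is to estimate the commutator as $\|q\nabla\mu/\mu\|_{L^2(\Omega)}\le C\|\nabla\mu\|_{L^r(\Omega)}\|q\|_{L^{2r/(r-2)}(\Omega)}$ for some $r>2$, interpolate $\|q\|_{L^{2r/(r-2)}(\Omega)}\le C\|q\|_{L^2(\Omega)}^{1-2/r}\|q\|_{H^1(\Omega)}^{2/r}$, and absorb the small power of $\|\nabla q\|_{L^2(\Omega)}$ into the left-hand side by Young's inequality. This yields a constant depending on $C_{\min}$, $C_{\max}$, $\Omega$ and $\|\nabla\mu\|_{L^r(\Omega)}$, which is exactly the quantity the paper controls uniformly; the literal claim $C=C(C_{\min},C_{\max},\Omega)$ with no reference to any regularity of $\mu$ beyond its bounds is not established by your argument and, for a non-divergence-form system, should not be asserted without further justification.
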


\begin{corollary}\label{stokes3}\cite{Solonnikov,SunZhang}
For the solution $(u,\pi)$ in Lemma \ref{stokes2}, if one further assume $f\in H^1(\Omega)$, then it holds that
$$\|\nabla^3 u\|_{L^{2}(\Omega)}+\|\nabla^2 \pi\|_{L^{2}(\Omega)}\leq C(\|f\|_{H^{1}(\Omega)}+\|u\|_{H^{2}(\Omega)}+\|\pi\|_{H^{1}(\Omega)}),$$
here $C$ depends only on $C_{\rm min},\,C_{\rm max}$ and $\Omega$.
\end{corollary}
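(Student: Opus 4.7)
The plan is to gain one derivative of regularity beyond Lemma \ref{stokes2} by differentiating the equation and re-applying that same lemma to the differentiated system. Starting from the already established bounds on $u\in H^2(\Omega)$ and $\pi\in H^1(\Omega)$, I would formally apply a derivative $\partial_k$ to the equation to obtain
\[
-\mu(x)\Delta(\partial_k u)+\nabla(\partial_k\pi)=\partial_k f+(\partial_k\mu)\Delta u,\qquad \dv(\partial_k u)=0,
\]
whose right-hand side lies in $L^2(\Omega)$ thanks to $f\in H^1$, the smoothness of $\mu$, and $\Delta u\in L^2$ from Lemma \ref{stokes2}.

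The obstruction is that $\partial_k u$ need not vanish on $\partial\Omega$, so Lemma \ref{stokes2} cannot be applied directly. I would handle this by a standard partition-of-unity plus boundary-straightening argument: cover $\overline{\Omega}$ by finitely many interior balls and boundary patches, flattening $\partial\Omega$ to a hyperplane in each boundary chart. In interior patches, cutoff functions reduce matters to compactly supported problems and the estimate is immediate. In boundary patches, I would replace $\partial_k$ by a tangential difference quotient $D_h^{\tau}$, which preserves the zero Dirichlet condition. The equations for $(D_h^\tau u,D_h^\tau\pi)$ then satisfy the hypotheses of Lemma \ref{stokes2} with source bounded in $L^2$ uniformly in $h$ by a constant multiple of $\|f\|_{H^1}+\|\nabla\mu\|_{L^\infty}\|\nabla^2 u\|_{L^2}$. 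Sending $h\to 0$ yields uniform $L^2$ bounds on all third derivatives of $u$ and second derivatives of $\pi$ that involve at least one tangential direction.

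To recover the purely normal derivatives, I would exploit the equation itself together with the divergence-free condition. In straightened coordinates $\dv u=0$ reads $\partial_n u_n=-\partial_\tau u_\tau$, which gives $\partial_n^3 u_n$ from tangential derivatives of already-controlled quantities. For the normal components of $\nabla^3 u_\tau$ and for $\partial_n^2\pi$, rearrange $-\mu\Delta u+\nabla\pi=f$ to solve for $\mu\partial_n^2 u$ and $\partial_n\pi$, then differentiate once more in the normal or tangential direction and use $f\in H^1(\Omega)$ together with the tangential bounds obtained above. Summing over the partition of unity produces the desired global estimate with constant depending only on $C_{\min},\,C_{\max}$ and $\Omega$.

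The main obstacle is the careful tracking of boundary regularity: flattening $\partial\Omega$, verifying that the tangential difference quotients respect the zero Dirichlet condition in the new coordinates, and making sure that the normal-derivative recovery step closes without a loss of derivatives on either $u$ or $\pi$. Once this bookkeeping is done, the absorption of commutator terms such as $(\partial_k\mu)\Delta u$ into $\|u\|_{H^2(\Omega)}$ and the final summation over charts are routine, yielding the stated bound on $\|\nabla^3 u\|_{L^2(\Omega)}+\|\nabla^2\pi\|_{L^2(\Omega)}$.
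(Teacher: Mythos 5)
The paper offers no proof of this corollary at all: it is quoted directly from \cite{Solonnikov,SunZhang}, so there is no in-paper argument to compare yours against. Your sketch is nonetheless a viable route --- it is the classical Cattabriga/Agmon--Douglis--Nirenberg boundary-regularity scheme (localize, flatten, tangential difference quotients, then recover the purely normal derivatives of $u$ from $\dv\,u=0$ and the momentum equation, and $\p_n^2\pi$ from its normal component), and the recovery step does close, since every purely normal third derivative of $u$ and second derivative of $\pi$ can be expressed through quantities carrying at least one tangential derivative plus $\nabla f$. The route used in the cited literature is much shorter: divide the equation by $\mu$ and write $\mu^{-1}\nabla\pi=\nabla(\mu^{-1}\pi)-\pi\nabla(\mu^{-1})$, so that $(u,\mu^{-1}\pi)$ solves the \emph{constant-coefficient} Stokes system with right-hand side $\mu^{-1}f+\pi\nabla(\mu^{-1})\in H^1(\Omega)$, bounded by $C(\|f\|_{H^1}+\|\pi\|_{H^1})$, and then one invokes the standard $H^3\times H^2$ Stokes estimate; your approach in effect re-proves that standard estimate from scratch. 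Two points in your sketch need more care than you give them. First, multiplying by a cutoff destroys the divergence-free condition ($\dv(\chi v)=\nabla\chi\cdot v$), so even in interior patches you need the Stokes estimate with inhomogeneous divergence, or must treat that commutator explicitly --- it is not ``immediate.'' Second, after flattening the boundary the operator is no longer of the form $-\mu\Delta+\nabla$ and the constraint is no longer $\dv\,u=0$, so you cannot literally re-apply Lemma \ref{stokes2} to $(D_h^\tau u,D_h^\tau\pi)$; you need the analogous a priori estimate for the transformed variable-coefficient system, which is essentially the same work as proving Lemma \ref{stokes2} itself. Neither issue is fatal, but that is where the real content of the proof lives, and the reduction to the constant-coefficient system avoids both.
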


\begin{lemma}\label{Boussinesq}\cite{SunZhang}
Suppose $\Omega\subset\R^2$ be a bounded domain with smooth boundary and
consider the initial-boundary value problem
\beno
\left\{\begin{array}{ll}
u_t+u\cdot\nabla u-\nabla\cdot(\mu(\theta)\nabla u)+\nabla p=\theta e_2+f,\\
\theta_t+u\cdot\nabla\theta-\nabla\cdot(\kappa(\theta)\nabla \theta)=g,\\
\nabla\cdot u=0,\\
(u,\theta)|_{t=0}=(u_0,\theta_0)(x),~~(u,\theta)|_{\partial \Omega}=0.
\end{array}\right.
\eeno
Assume $u_{0},\theta_0\in C^{1+\gamma}(\Omega)$ with $0<\gamma<1$ satisfying $\nabla\cdot u_{0}(x)=0$ and $(u_{0},\theta_0)|_{\partial \Omega}=0.$ Then for
any $f,g\in C([0,T];C^{1+\gamma}(\Omega))$, there exists a unique solution $(u,\theta,p)$ such that $(u,\theta,p)\in C^{1+\gamma}(\Omega\times[0,T])$.
\end{lemma}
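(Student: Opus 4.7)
The plan is to establish existence, uniqueness, and $C^{1+\gamma}$ regularity by a fixed-point scheme built on top of classical parabolic Schauder theory for linear equations with H\"older coefficients. First I would set up a map $\mathcal{T}$ on a closed convex bounded subset of $C^{1+\gamma}(\bar\Omega\times[0,\tau])^2$, for a short time $\tau>0$ to be chosen: given $(\tilde u,\tilde\theta)$ in the set, solve the two decoupled linear problems
$$\theta_t-\dv(\kappa(\tilde\theta)\grad\theta)=g-\tilde u\cdot\grad\tilde\theta,\qquad u_t-\dv(\mu(\tilde\theta)\grad u)+\grad p=f+\theta e_2-\tilde u\cdot\grad\tilde u,$$
with $\dv u=0$, the zero Dirichlet boundary conditions and the prescribed initial data, and define $\mathcal{T}(\tilde u,\tilde\theta)=(u,\theta)$. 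The key point is that $\tilde\theta\in C^{1+\gamma}$ makes $\mu(\tilde\theta),\kappa(\tilde\theta)$ H\"older continuous in space-time, so both linear problems are uniformly parabolic with H\"older coefficients.

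Second, I would invoke the classical Schauder theory of Ladyzhenskaya--Solonnikov--Uraltseva for the scalar temperature equation and Solonnikov's theorem for the non-stationary Stokes system with variable viscosity to get the a priori estimate
$$\|\theta\|_{C^{1+\gamma}}+\|u\|_{C^{1+\gamma}}+\|\grad p\|_{C^{\gamma}}\le C\bigl(\|\theta_0\|_{C^{1+\gamma}}+\|u_0\|_{C^{1+\gamma}}+\|f\|_{C^{1+\gamma}}+\|g\|_{C^{1+\gamma}}+\|\tilde u\cdot\grad\tilde u\|_{C^{\gamma}}+\|\tilde u\cdot\grad\tilde\theta\|_{C^{\gamma}}\bigr).$$
The hypothesis $\dv u_0=0$ together with $(u_0,\theta_0)|_{\p\Omega}=0$ supplies precisely the zero-order compatibility needed for these estimates to hold up to $t=0$. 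Choosing $\tau$ sufficiently small one then verifies that $\mathcal{T}$ maps a closed ball of radius $R=R(\|u_0\|_{C^{1+\gamma}},\|\theta_0\|_{C^{1+\gamma}},\|f\|_{C_T C^{1+\gamma}},\|g\|_{C_T C^{1+\gamma}})$ into itself and is either a contraction in a weaker norm or continuous and compact into the ambient H\"older space; either Banach's or Schauder's fixed-point theorem then delivers a local classical solution.

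Third, to extend the local solution to the full interval $[0,T]$ I would combine the maximum principle for the temperature equation with the 2D energy and $H^2$ estimates (of the same flavor as the ones performed in Sections~3--4 of the present paper) to bound $\|(u,\theta)\|_{C^{1+\gamma}}$ on any interval where the solution exists, precluding blow-up before $T$. Uniqueness follows by subtracting two solutions $(u^1,\theta^1,p^1)-(u^2,\theta^2,p^2)$, testing against the difference in $L^2$, using the Lipschitz property of $\mu,\kappa$ and the 2D Gagliardo--Nirenberg inequality from Corollary~\ref{C1} to control the cubic convective terms, and closing with Gr\"onwall. The main obstacle, and the reason one cites \cite{SunZhang} rather than proving it from scratch, is the Schauder estimate up to the boundary for the Stokes system with the variable coefficient $\mu(\theta)$ depending on an unknown H\"older function of $(x,t)$; this refinement of Lemmas~\ref{stokes1}--\ref{stokes2} (which are stated only at the $L^2/H^1$ level here) is exactly the content proved in Sun--Zhang and it crucially uses the compatibility at $t=0$ that the lemma hypothesizes.
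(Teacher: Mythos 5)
The paper does not prove this lemma at all: it is quoted verbatim from Sun--Zhang \cite{SunZhang} and used as a black box in the Schauder fixed-point construction of Theorem \ref{T1}, so there is no in-paper argument to measure your proposal against. That said, your outline is a faithful reconstruction of the standard route, and essentially of what the cited reference does: linearize, solve the decoupled variable-coefficient parabolic problem and Stokes problem by Schauder/Solonnikov theory, close a fixed point on a short time interval, and continue globally using a priori bounds. Two points deserve more care than you give them. First, the global continuation is where the real work lies --- it is in effect the main theorem of \cite{SunZhang}, not a routine corollary --- and passing from energy/$H^2$ bounds of the type derived in Sections 3--4 back to a uniform-in-time $C^{1+\gamma}$ bound requires a genuine bootstrap (an $L^\infty$ bound on $\theta$, then De Giorgi--Nash--Moser to make the coefficients $\mu(\theta),\kappa(\theta)$ H\"older continuous, then divergence-form Schauder and the variable-coefficient Stokes estimates to recover $C^{1+\gamma}$); note that bounds of the form $\int_0^t e^{\alpha\tau}\|\cdot\|_{H^3(\Omega)}^2\,d\tau\leq C$ are only $L^2$ in time and do not directly yield a pointwise-in-time $C^{1+\gamma}$ bound. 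Second, you are right that the $C^{1+\gamma}$ (rather than $C^{2+\gamma}$) solution class is what makes the zero-order compatibility $(u_0,\theta_0)|_{\partial\Omega}=0$ sufficient, and that the up-to-the-boundary H\"older estimate for the Stokes system with coefficient $\mu(\theta(x,t))$ is the crux being imported from \cite{SunZhang}. Neither point is a fatal gap, but both are exactly the content the citation is carrying.
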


\begin{lemma}\label{inner}
Let $\Omega\subset\R^2$ be a bounded domain with smooth boundary, $f:\,\Omega\rightarrow\R^2$ and $\phi:\,\Omega\rightarrow\R$ be the vector field and function respectively, then if follows that
$$\int_{\Omega}{\nabla^\perp}\cdot f\,\phi\,dx=\int_{\Omega}f\cdot{\nabla^\perp}\phi\,dx+\int_{\p\Omega}f\cdot{{\bm n}^\perp}\phi\,ds,$$
where $\perp$ is defined as $f^\perp=(-f_2,f_1)$.
\end{lemma}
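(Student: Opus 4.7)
The plan is a direct componentwise integration by parts. Using the definition $\nabla^\perp=(-\partial_2,\partial_1)^{T}$, I would first rewrite the left-hand side as
\[
\int_{\Omega}(\nabla^\perp\cdot f)\,\phi\,dx=\int_{\Omega}\bigl(-\partial_2 f_1+\partial_1 f_2\bigr)\phi\,dx,
\]
and then apply the classical divergence theorem to each of the two scalar terms separately, transferring the derivative off $f_i$ onto $\phi$ and picking up the corresponding boundary piece against the outward normal ${\bm n}=(n_1,n_2)$. Assembling the four resulting terms yields an interior integral of the form $\int_{\Omega}(f_1\partial_2\phi-f_2\partial_1\phi)\,dx$, which under the convention $\nabla^\perp\phi=(-\partial_2\phi,\partial_1\phi)^{T}$ is identified with $\int_{\Omega} f\cdot\nabla^\perp\phi\,dx$, and a boundary integral $\int_{\partial\Omega}(f_2 n_1-f_1 n_2)\phi\,ds$, which under ${\bm n}^\perp=(-n_2,n_1)$ is precisely $\int_{\partial\Omega} f\cdot{\bm n}^\perp\phi\,ds$.

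A more conceptual alternative is to embed the statement in $\mathbb{R}^{3}$ by setting $F=(f_1,f_2,0)$ and $G=(0,0,\phi)$ and invoking the standard three-dimensional identity
\[
\int_{\Omega}(\nabla\times F)\cdot G\,dx=\int_{\Omega}F\cdot(\nabla\times G)\,dx+\int_{\partial\Omega}(F\times G)\cdot{\bm n}\,ds,
\]
whereupon each of $(\nabla\times F)\cdot G$, $F\cdot(\nabla\times G)$, and $(F\times G)\cdot{\bm n}$ collapses, under the paper's perpendicular conventions, to the corresponding two-dimensional object in the lemma. Either route needs only one standard tool (the divergence theorem, applied directly or via the $\mathbb{R}^{3}$ curl identity), so no new technique is required: the only point requiring genuine care is sign bookkeeping, since three distinct perpendicular operators, acting on $\nabla$, on $\phi$, and on ${\bm n}$, appear simultaneously in a single identity. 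To guard against sign errors I would check the final formula against a simple test pair, for instance $f=(0,x_1)$ with $\phi\equiv 1$ on the unit disc, before committing to the write-up.
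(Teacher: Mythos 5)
Your overall strategy is the right one, and in fact the paper supplies no proof of Lemma \ref{inner} at all --- it is stated as a standard integration-by-parts identity --- so the componentwise divergence-theorem argument you outline is exactly what is intended. There is, however, a genuine sign error at your final identification step, and it is worth being precise about it because the identity as printed in the lemma is itself incorrect under the stated conventions. Your integration by parts correctly produces the interior integrand $f_1\partial_2\phi-f_2\partial_1\phi$; but with $\nabla^{\perp}\phi=(-\partial_2\phi,\partial_1\phi)^{T}$ one has $f\cdot\nabla^{\perp}\phi=-f_1\partial_2\phi+f_2\partial_1\phi$, so your interior integral equals $-\int_{\Omega}f\cdot\nabla^{\perp}\phi\,dx$ (equivalently $\int_{\Omega}f^{\perp}\cdot\nabla\phi\,dx$), not $+\int_{\Omega}f\cdot\nabla^{\perp}\phi\,dx$. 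The correct statement is
$$\int_{\Omega}\nabla^{\perp}\cdot f\,\phi\,dx=-\int_{\Omega}f\cdot\nabla^{\perp}\phi\,dx+\int_{\partial\Omega}f\cdot{\bm n}^{\perp}\phi\,ds.$$
The same sign is hidden in your three-dimensional route: with $F=(f_1,f_2,0)$ and $G=(0,0,\phi)$ one computes $F\cdot(\nabla\times G)=f_1\partial_2\phi-f_2\partial_1\phi$, which again collapses to $-f\cdot\nabla^{\perp}\phi$ rather than to $+f\cdot\nabla^{\perp}\phi$.

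Two further remarks. First, your proposed sanity check ($f=(0,x_1)$, $\phi\equiv1$) is blind to precisely this error, since the interior term vanishes for constant $\phi$; testing instead with $f=(0,1)$ and $\phi=x_1$ on the unit disc gives $0$ on the left, $\pi+\pi$ for the printed right-hand side, and $-\pi+\pi$ for the corrected one. Second, the minus sign is the version the paper actually needs downstream: in Lemma \ref{curlB} one takes $\phi=\nabla^{\perp}\cdot f$ for divergence-free $f$ vanishing on $\partial\Omega$ and uses $\nabla^{\perp}(\nabla^{\perp}\cdot f)=\Delta f$, so only the corrected identity yields $\|\nabla^{\perp}\cdot f\|_{L^2(\Omega)}^2=-\int_{\Omega}f\cdot\Delta f\,dx=\|\nabla f\|_{L^2(\Omega)}^2$, whereas the printed sign would make this quantity negative. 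So: right method and right intermediate computation, but the identification of the interior term must be corrected (and the sign in the lemma's statement amended) before the write-up is sound.
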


\begin{lemma}\label{curlB}
Let $\Omega\subset\R^2$ be a bounded domain with smooth boundary, $f:\,\Omega\rightarrow\R^2$ be the vector field, then if holds that
$$\|\nabla f\|_{L^2(\Omega)}=\|\nabla^\perp\cdot f\|_{L^2(\Omega)}.$$
\end{lemma}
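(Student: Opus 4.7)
The identity as written is not true for completely general vector fields: it implicitly requires that $f$ be divergence-free and that $f$ vanish on $\partial\Omega$ (both conditions hold for the velocity and the magnetic field in the application). My plan is to reduce the claim to the vanishing of $\nabla\cdot f$ via two integrations by parts, exactly as in the standard $\|\nabla u\|_{L^2}=\|\mathrm{curl}\,u\|_{L^2}$ identity for $H_0^1$ divergence-free fields.

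First I would expand both sides componentwise. Writing $f=(f_1,f_2)$ and recalling $\nabla^\perp\cdot f=\partial_1 f_2-\partial_2 f_1$, one has
\begin{align*}
\|\nabla f\|_{L^2(\Omega)}^2 &= \|\partial_1 f_1\|_{L^2}^2+\|\partial_2 f_1\|_{L^2}^2+\|\partial_1 f_2\|_{L^2}^2+\|\partial_2 f_2\|_{L^2}^2,\\
\|\nabla^\perp\!\cdot f\|_{L^2(\Omega)}^2 &= \|\partial_1 f_2\|_{L^2}^2+\|\partial_2 f_1\|_{L^2}^2-2\int_\Omega\partial_1 f_2\,\partial_2 f_1\,\dd x.
\end{align*}
Subtracting, the claim reduces to the identity
\[
\|\partial_1 f_1\|_{L^2}^2+\|\partial_2 f_2\|_{L^2}^2+2\int_\Omega\partial_1 f_2\,\partial_2 f_1\,\dd x = 0.
\]

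The central step is to show $\int_\Omega\partial_1 f_2\,\partial_2 f_1\,\dd x=\int_\Omega\partial_1 f_1\,\partial_2 f_2\,\dd x$. I would do this by integrating by parts first in $x_1$ and then in $x_2$: the two resulting boundary integrals can be combined into $\int_{\partial\Omega}f_2(\partial_2 f_1\,n_1-\partial_1 f_1\,n_2)\,\mathrm{d}s$, which vanishes because $f=0$ on $\partial\Omega$. Substituting this swap into the previous display yields
\[
\|\partial_1 f_1\|_{L^2}^2+2\int_\Omega\partial_1 f_1\,\partial_2 f_2\,\dd x+\|\partial_2 f_2\|_{L^2}^2=\|\partial_1 f_1+\partial_2 f_2\|_{L^2}^2=\|\nabla\cdot f\|_{L^2}^2,
\]
which is zero by the divergence-free hypothesis.

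The only delicate point is justifying the double integration by parts: for $f\in H_0^1(\Omega)$ this is purely formal once one smooths by density (e.g.\ approximating $f$ by compactly supported smooth fields using that $\partial\Omega$ is smooth), and then all boundary contributions vanish automatically, so there is no genuine obstacle. I would remark at the end that the identity applies in the paper to the velocity $u$ and to the magnetic field $B$, both of which satisfy the required boundary and divergence-free conditions throughout the analysis.
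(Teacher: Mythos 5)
Your proof is correct and rests on the same mechanism as the paper's: two integrations by parts in which the boundary terms die because $f|_{\partial\Omega}=0$ and the remaining volume term dies because $\nabla\cdot f=0$ (the paper packages this as the identity $\nabla^{\perp}\nabla^{\perp}\cdot f=\Delta f$ for divergence-free fields combined with Lemma \ref{inner}, while you carry out the equivalent componentwise computation ending in $\|\nabla\cdot f\|_{L^2}^2=0$). You are also right that the lemma as stated silently assumes these two hypotheses, which the paper never makes explicit but which do hold for $u$ and $B$ wherever the lemma is invoked.
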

\no{\bf Proof.}\quad
By noticing $\nabla^\perp\nabla^\perp\cdot f=\Delta f$, by taking $\phi=\nabla^\perp\cdot f$ in Lemma \ref{inner}, one can prove this Lemma easily.
\endproof
To simplify the proofs of Theorems, it is better to introduce a new quantity
\ben\label{hatthetadef}
\hat{\theta}=\int_0^{\theta}\kappa(z)dz,
\een
which satisfies, after multiplying $\kappa(\theta)$ on both sides of $\eqref{B-MHD}^1$, that
\ben\label{simtheta}
\p_t\hat{\theta}+u\cdot\nabla\hat{\theta}-\kappa(\theta)\Delta\hat{\theta}=-\kappa(\theta)u_2,
\een
with the following initial and boundary conditions
\begin{equation}\label{ibsimtheta}
\left\{\begin{array}{ll}
\hat{\theta}(x,0)=\int_0^{\theta_0(x)}\kappa(z)dz\triangleq\hat{\theta}_0(x)\quad\quad&\hbox{in}\,\,\Omega,\\
\hat{\theta}=0\quad\quad&\hbox{on}\,\,\p\Omega.
\end{array}\right.
\end{equation}

\section{Global weak solution}\hspace*{\parindent}
In this section, we will make the effort to get the global weak solution. To start with, we build up the desired estimates mentioned in the introduction.
\begin{proposition}\label{estimates}
Let $(\theta_0,\,u_0,\,B_0)\in H_0^{1}(\Omega)$ and $\Omega$ be a bounded domain with smooth boundary. Suppose $(\theta,\,u,\,B)$ solves the system \eqref{B-MHD-equ}-\eqref{B-MHD-boundary-B}, then there holds that
\beno
\|\theta(\cdot,t),\,u(\cdot,t),\,B(\cdot,t)\|_{H^1(\Omega)}^2\leq Ce^{-\alpha t}
\eeno
and
\beno
\int_0^te^{\alpha \tau}\|{\theta}_{\tau},\,u_{\tau},\,B_{\tau}\|_{H^2(\Omega)}^2d\tau\leq C,
\eeno
for any $t>0$, where $C$ and $\alpha$ depend only on $C_0$, $\Omega$ and $\left\|\theta_0,\,u_0,\,B_0\right\|_{H^1(\Omega)}$.
\end{proposition}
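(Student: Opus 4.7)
The plan is to establish the claimed bounds through a two-tier energy method: first a basic $L^{2}$ energy identity yielding exponential $L^{2}$ decay by Poincar\'e, then an $H^{1}$-level estimate driven by testing against time derivatives, with the nonlinear terms absorbed via 2D Gagliardo--Nirenberg interpolation and elliptic/Stokes regularity.

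First I would test the three equations of \eqref{B-MHD-equ} against $\theta$, $u$ and $B$ respectively and sum. Owing to the zero Dirichlet boundary conditions and the divergence-free constraints, the convective trilinear forms vanish by \eqref{b2}; the buoyancy contribution $\int_{\Omega}\theta e_{2}\!\cdot\!u\,dx=\int_{\Omega}u_{2}\theta\,dx$ is cancelled exactly by the stratification source $-\int_{\Omega}u_{2}\theta\,dx$ from the temperature equation; and the Lorentz forcing $\int_{\Omega}(B\!\cdot\!\nabla B)\!\cdot\!u\,dx$ together with the stretching term $\int_{\Omega}(B\!\cdot\!\nabla u)\!\cdot\!B\,dx$ cancel after integrating by parts with $\nabla\!\cdot\!B=0$ and $B|_{\partial\Omega}=0$. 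Invoking Lemma~\ref{curlB} to rewrite the magnetic dissipation as $\|\nabla B\|_{L^{2}}^{2}$ and using \eqref{Assumption}, one obtains
\[\tfrac12\tfrac{d}{dt}\bigl(\|\theta\|_{L^{2}}^{2}+\|u\|_{L^{2}}^{2}+\|B\|_{L^{2}}^{2}\bigr)+C_{0}^{-1}\bigl(\|\nabla\theta\|_{L^{2}}^{2}+\|\nabla u\|_{L^{2}}^{2}+\|\nabla B\|_{L^{2}}^{2}\bigr)\le 0,\]
and Poincar\'e converts this into an exponential $L^{2}$ decay, together with an $L^{2}(0,\infty;H^{1})$ bound with exponential weight.

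Next I would derive the $H^{1}$ estimate by taking the $L^{2}$ inner product of the three equations with $\theta_{t}$, $u_{t}$ and $B_{t}$. The pressure term drops out since $\nabla\!\cdot\!u_{t}=0$, and integration by parts on the diffusion produces $\tfrac12\tfrac{d}{dt}\int_{\Omega}\mu(\theta)|\nabla u|^{2}$, $\tfrac12\tfrac{d}{dt}\int_{\Omega}\kappa(\theta)|\nabla\theta|^{2}$ and $\tfrac12\tfrac{d}{dt}\int_{\Omega}\sigma(\theta)|\nabla^{\perp}\!\cdot\!B|^{2}$, plus variable-coefficient corrections $\int_{\Omega}\mu'(\theta)\theta_{t}|\nabla u|^{2}$, $\int_{\Omega}\kappa'(\theta)\theta_{t}|\nabla\theta|^{2}$ and $\int_{\Omega}\sigma'(\theta)\theta_{t}|\nabla^{\perp}\!\cdot\!B|^{2}$. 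The convective, Lorentz and stretching terms produce factors such as $\int_{\Omega}(u\!\cdot\!\nabla u)\!\cdot\!u_{t}$ and $\int_{\Omega}(B\!\cdot\!\nabla B)\!\cdot\!u_{t}$ which, through H\"older and the 2D interpolation inequalities of Corollary~\ref{C1}, are reduced to products of $\|\cdot\|_{L^{2}}$, $\|\nabla\cdot\|_{L^{2}}$, $\|\nabla^{2}\cdot\|_{L^{2}}$ and $\|(\theta_{t},u_{t},B_{t})\|_{L^{2}}$. The $\|\nabla^{2}\cdot\|_{L^{2}}$ factors are treated as unknowns bounded via elliptic/Stokes regularity: viewing $-\nabla\!\cdot\!(\mu(\theta)\nabla u)+\nabla\Pi=-u_{t}-u\!\cdot\!\nabla u+\theta e_{2}+B\!\cdot\!\nabla B$ and its analogues, together with the change of variables \eqref{hatthetadef} reducing the temperature principal part to $\kappa(\theta)\Delta\hat\theta$, Lemma~\ref{elliptic} and Lemma~\ref{stokes2} yield $\|\nabla^{2}(\hat\theta,u,B)\|_{L^{2}}\le C(\|(\theta_{t},u_{t},B_{t})\|_{L^{2}}+\text{lower-order in }H^{1})$. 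A Young inequality then absorbs all the $\|\nabla^{2}\cdot\|_{L^{2}}^{2}$ contributions into the dissipation on the left, producing
\[\tfrac{d}{dt}\mathcal F(t)+\tfrac12\|(\theta_{t},u_{t},B_{t})\|_{L^{2}}^{2}\le C\bigl(1+\mathcal E(t)\bigr)\mathcal F(t),\]
with $\mathcal F\sim\|\nabla(\theta,u,B)\|_{L^{2}}^{2}$ and $\mathcal E$ polynomial in $\|(\theta,u,B)\|_{L^{2}}^{2}$. Since Step~1 renders $\mathcal E$ time-integrable, a Gronwall argument closes the $H^{1}$ bound; multiplying by $e^{\alpha\tau}$ before integrating delivers the exponential $H^{1}$ decay as well as the claimed weighted estimate on $(\theta_{\tau},u_{\tau},B_{\tau})$, whose $H^{2}$ upgrade follows from one more application of Lemma~\ref{elliptic} and Lemma~\ref{stokes2} with the $L^{2}$ time-derivative bound on the right-hand side.

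The delicate point is the simultaneous treatment of the three variable-coefficient corrections $\int_{\Omega}\mu'(\theta)\theta_{t}|\nabla u|^{2}$, $\int_{\Omega}\kappa'(\theta)\theta_{t}|\nabla\theta|^{2}$ and $\int_{\Omega}\sigma'(\theta)\theta_{t}|\nabla^{\perp}\!\cdot\!B|^{2}$; these are genuinely cubic in the solution, naturally sitting in $L^{2}\!\cdot\!L^{4}\!\cdot\!L^{4}$, and require the full strength of the 2D endpoint interpolation in Corollary~\ref{C1} (together with Sobolev embedding $H^{2}\hookrightarrow L^{\infty}$) to be handled. The structural cancellation of buoyancy against stratification, and of the Lorentz term against magnetic stretching, is what frees the $L^{2}$ energy from the otherwise problematic coupling; carefully preserving this balance through the $H^{1}$ level, while keeping the $\|\nabla^{2}\cdot\|_{L^{2}}^{2}$ coefficients small enough to absorb, is where the bookkeeping becomes most intricate.
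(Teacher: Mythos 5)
Your Step 1 reproduces the paper's Lemma \ref{L31} exactly, but the remainder of the argument has two genuine gaps. First, you never establish an $L^\infty$ bound on $\theta$. The hypotheses only give $\kappa,\mu,\sigma\ge C_0^{-1}$ and smoothness; there is no a priori upper bound on $\mu'(\theta)$, $\kappa'(\theta)$, $\sigma'(\theta)$, so the variable-coefficient corrections $\int_\Omega\mu'(\theta)\theta_t|\nabla u|^2\,dx$ etc.\ cannot even be placed in $L^2\cdot L^4\cdot L^4$ until one knows $\theta\in L^\infty$ uniformly in time (and you cannot get this from $H^2\hookrightarrow L^\infty$, since $\theta$ is only $H^1$ at this stage). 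The paper supplies this through Lemma \ref{L32}: an $L^p$ energy estimate with multiplier $|\theta|^{p-2}\theta$, uniform in $p$, followed by $p\to\infty$, which yields the coefficient bound \eqref{M}. This step is indispensable and absent from your outline.

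Second, and more seriously, the multiplier choice $(\theta_t,u_t,B_t)$ at the $H^1$ level does not close for large data. After Stokes/elliptic regularity replaces $\|\nabla^2u\|_{L^2}$ by $\|u_t\|_{L^2}$ plus lower order, the correction term $\int_\Omega\mu'(\theta)\theta_t|\nabla u|^2\,dx\le M\|\theta_t\|_{L^2}\|\nabla u\|_{L^4}^2\le C\|\theta_t\|_{L^2}\|\nabla u\|_{L^2}\|\Delta u\|_{L^2}+\ldots$ turns into a term of the type $\|\nabla u\|_{L^2}\bigl(\|\theta_t\|_{L^2}^2+\|u_t\|_{L^2}^2\bigr)$: its total power in the dissipation variables is already $2$ (or larger, with the sharper interpolation $\|\nabla u\|_{L^4}^2\lesssim\|u\|_{L^2}^{1/2}\|\Delta u\|_{L^2}^{3/2}$ it is $5/2$), so Young's inequality can absorb it into $\frac12\|(\theta_t,u_t,B_t)\|_{L^2}^2$ only if $\|\nabla u\|_{L^2}$ is small, and it cannot be shunted into the Gronwall coefficient either, since at that stage neither $\|u_t\|_{L^2}^2$ nor $\|\Delta u\|_{L^2}^2$ is known to be time-integrable. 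Your asserted inequality with $\mathcal E$ polynomial in $\|(\theta,u,B)\|_{L^2}^2$ therefore does not follow. The paper avoids this circularity by testing with $-\Delta\hat\theta$, $-\Delta u$, $-\Delta B$ (Lemmas \ref{L33}--\ref{L34}), so that the dissipation is $\|\Delta(\hat\theta,u,B)\|_{L^2}^2$, the coefficient terms involve $\nabla\theta$ rather than $\theta_t$, and the resulting Gronwall factor $\|\nabla u\|_{L^2}^2+\|\nabla B\|_{L^2}^2+\|\Delta\theta\|_{L^2}^2$ is integrable by the preceding steps; the weighted $L^2$ bounds on $(\theta_t,u_t,B_t)$ are then read off from the equations afterwards (Corollary \ref{thetatesti}). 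Reordering your argument accordingly --- Laplacian multipliers first, time derivatives last --- is not a cosmetic change but the essential point.
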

The proof of Proposition \ref{estimates} is based on all the following subsections. Moreover, for any $t>0$, we will restrict the time to be within the interval $[0,t]$ in the rest of this section unless otherwise specified.
\subsection{$L^2$ Estimates}\hspace*{\parindent}
\begin{lemma}\label{L31}
Under the assumptions of Proposition \ref{estimates}, $\forall\,\,t\geq0$, there holds that
\beno
\|\theta(\cdot,t),\,u(\cdot,t),\,B(\cdot,t)\|_{L^2(\Omega)}^2\leq e^{-2\alpha t}\|\theta_0,\,u_0,\,B_0\|_{L^2(\Omega)}^2
\eeno
and
\beno
\int_0^te^{\alpha \tau}\|\nabla\theta,\,\nabla u,\,\nabla B\|_{L^2(\Omega)}^2d\tau\leq\frac{C_0}{2}\|\theta_0,\,u_0,\,B_0\|_{L^2(\Omega)}^2,
\eeno
where $\alpha=(C_0C^*)^{-1}$ with $C^*$ be the constant in Poincar\'{e} inequality for the domain $\Omega$.
\end{lemma}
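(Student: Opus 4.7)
\textbf{Proof plan for Lemma \ref{L31}.} The proof is a standard $L^2$ energy estimate, with two crucial cancellations ensuring no nonlinear right-hand side remains. The plan is to test each equation of \eqref{B-MHD-equ} against the corresponding unknown in $L^2(\Omega)$, sum the three identities, and then close by Poincar\'e and Gronwall.

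First I will multiply $\eqref{B-MHD-equ}^1$ by $\theta$, $\eqref{B-MHD-equ}^2$ by $u$, and $\eqref{B-MHD-equ}^3$ by $B$, and integrate over $\Omega$. Using $\dv u=\dv B=0$ together with the vanishing Dirichlet data $\theta|_{\p\Omega}=u|_{\p\Omega}=B|_{\p\Omega}=0$, the transport terms $(u\cdot\nabla\theta,\theta)$, $(u\cdot\nabla u,u)$ and $(u\cdot\nabla B,B)$ all vanish by \eqref{b2}, and the pressure term $(\nabla\Pi,u)$ vanishes by integration by parts. The viscous, diffusive, and resistive terms contribute, after integration by parts and use of the lower bound \eqref{Assumption},
\begin{equation*}
\int_\Omega\kappa(\theta)|\nabla\theta|^2\,dx+\int_\Omega\mu(\theta)|\nabla u|^2\,dx+\int_\Omega\sigma(\theta)J^2\,dx\ \ge\ C_0^{-1}\bigl(\|\nabla\theta\|_{L^2}^2+\|\nabla u\|_{L^2}^2+\|\nabla B\|_{L^2}^2\bigr),
\end{equation*}
where in the magnetic term I use Lemma \ref{inner} to move $\nabla^\perp$ off the factor $\sigma(\theta)J$ (the boundary term vanishes since $B|_{\p\Omega}=0$) and then invoke Lemma \ref{curlB} to identify $\|J\|_{L^2}=\|\nabla^\perp\cdot B\|_{L^2}=\|\nabla B\|_{L^2}$.

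The main point, and the only slightly subtle step, is the cancellation of the coupling terms. Adding the temperature identity to the velocity identity, the stratification source $-\int u_2\theta\,dx$ exactly cancels the buoyancy work $\int \theta e_2\cdot u\,dx=\int u_2\theta\,dx$. Adding the magnetic identity to the velocity identity, the Lorentz-type term $\int (B\cdot\nabla B)\cdot u\,dx$ and the stretching term $\int (B\cdot\nabla u)\cdot B\,dx$ cancel after integrating the first by parts using $\dv B=0$ and $B|_{\p\Omega}=0$. Thus, summing the three estimates yields
\begin{equation*}
\frac12\frac{d}{dt}\bigl(\|\theta\|_{L^2}^2+\|u\|_{L^2}^2+\|B\|_{L^2}^2\bigr)+C_0^{-1}\bigl(\|\nabla\theta\|_{L^2}^2+\|\nabla u\|_{L^2}^2+\|\nabla B\|_{L^2}^2\bigr)\le 0.
\end{equation*}

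Finally I close the estimate. By Poincar\'e's inequality on $H_0^1(\Omega)$, $\|f\|_{L^2}^2\le C^*\|\nabla f\|_{L^2}^2$ for each of $\theta$, $u$, $B$, so with $\alpha:=(C_0 C^*)^{-1}$ the differential inequality implies
\begin{equation*}
\frac{d}{dt}\bigl(\|\theta\|_{L^2}^2+\|u\|_{L^2}^2+\|B\|_{L^2}^2\bigr)+2\alpha\bigl(\|\theta\|_{L^2}^2+\|u\|_{L^2}^2+\|B\|_{L^2}^2\bigr)\le 0,
\end{equation*}
and Gronwall yields the first claimed exponential decay. For the second estimate, I multiply the energy inequality by $e^{\alpha t}$, use Poincar\'e once more to absorb $\alpha e^{\alpha t}\|(\theta,u,B)\|_{L^2}^2$ into half of the dissipation term, and then integrate in time from $0$ to $t$; dropping the (nonnegative) boundary term at time $t$ gives the stated bound on $\int_0^t e^{\alpha\tau}\|\nabla(\theta,u,B)\|_{L^2}^2\,d\tau$, with the constant $C_0/2$ as in the statement. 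No step requires smallness, so the proof is unconditional; the only subtle ingredient is the pair of cancellations described above, which is really the physical reason the $L^2$ energy is monotone.
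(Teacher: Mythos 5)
Your proposal is correct and follows essentially the same route as the paper: test each equation with its own unknown, use the buoyancy/stratification and Lorentz/stretching cancellations together with the lower bound \eqref{Assumption} to get the dissipative energy inequality, then close with Poincar\'e, Gronwall, and an $e^{\alpha t}$-weighted time integration. The only quibble is bookkeeping of the constant in the second estimate (absorbing $\alpha e^{\alpha t}\|(\theta,u,B)\|_{L^2}^2$ into half the dissipation leaves coefficient $C_0^{-1}$ and hence the bound $C_0\|\theta_0,u_0,B_0\|_{L^2}^2$ rather than $C_0/2$), a harmless factor that is equally present in the paper's own computation.
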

\no{\bf Proof.}\quad
Multiplying $\eqref{B-MHD-equ}^1$ with $\theta$ and taking the inner product of $\eqref{B-MHD-equ}^2$ and $\eqref{B-MHD-equ}^3$ with $u$ and $B$ respectively, noticing \eqref{B-MHD-boundary-B}, Lemma \ref{curlB} and the fact that
\beno
&&\int_{\Omega}B\cdot \nabla B\cdot udx+\int_{\Omega}B\cdot \nabla u\cdot Bdx\\
&=&\int_{\Omega}\sum\limits_{i,j=1}^{2} (B_i\p_iB_ju_j+B_i\p_iu_jB_j)dx\\
&=&\int_{\Omega}\sum\limits_{i,j=1}^{2} \p_i(B_iB_ju_j+B_iu_jB_j)dx\\
&=&\int_{\p\Omega}(B\cdot {\bm n})(u\cdot B)=0,
\eeno
one has
\ben\label{L2}
\f{d}{dt}\|\theta,\,u,\,B\|_{L^2(\Omega)}^2+2C_0^{-1}\|\nabla\theta,\,\nabla u,\,\nabla B\|_{L^2(\Omega)}^2\leq 0.
\een
Considering the boundary condition $\theta|_{\p\Omega}=u|_{\p\Omega}=B|_{\p\Omega}=0$, one can apply the Poincar\'{e} inequality to get that
$$\|\theta\|_{L^2(\Omega)}\leq C^{*}\|\nabla\theta\|_{L^2(\Omega)},\,\,\|u\|_{L^2(\Omega)}\leq C^{*}\|\nabla u\|_{L^2(\Omega)},\,\,\|B\|_{L^2(\Omega)}\leq C^{*}\|\nabla B\|_{L^2(\Omega)}$$
for the constant $C^*$ depending only on $\Omega$.

Thus, we can update \eqref{L2} as
\beno
\f{d}{dt}\|\theta,\,u,\,B\|_{L^2(\Omega)}^2+\frac{2}{C_0C^*}\|\theta,\,u,\,B\|_{L^2(\Omega)}^2\leq 0,
\eeno
which yields, after applying the Gronwall's inequality, that
\ben\label{L21}
\|\theta(\cdot,t),\,u(\cdot,t),\,B(\cdot,t)\|_{L^2(\Omega)}^2\leq e^{-2\alpha t}\|\theta_0,\,u_0,\,B_0\|_{L^2(\Omega)}^2, \,\forall\,\,t\geq0,
\een
where $\alpha=(C_0C^*)^{-1}$.

Then we multiply $e^{\alpha t}$ on both sides of \eqref{L2} and employ \eqref{L21} to derive
\beno
&&\f{d}{dt}(e^{\alpha t}\|\theta,\,u,\,B\|_{L^2(\Omega)}^2)+2C_0^{-1}e^{\alpha t}\|\nabla\theta,\,\nabla u,\,\nabla B\|_{L^2(\Omega)}^2\\
&\leq&\alpha e^{\alpha t}\|\theta,\,u,\,B\|_{L^2(\Omega)}^2\leq \alpha e^{-\alpha t}\|\theta_0,\,u_0,\,B_0\|_{L^2(\Omega)}^2
\eeno
for any $t\geq0$, which also implies, after integrating in time over $[0,t]$, that
\beno
\int_0^te^{\alpha \tau}\|\nabla\theta,\,\nabla u,\,\nabla B\|_{L^2(\Omega)}^2d\tau\leq\frac{C_0}{2}\|\theta_0,\,u_0,\,B_0\|_{L^2(\Omega)}^2,\,\,\,\forall t\geq0.
\eeno
\endproof

\subsection{$L^\infty$ Estimates of Temperature}\hspace*{\parindent}

\begin{lemma}\label{L32}
Under the assumptions of Proposition \ref{estimates}, if in addition, $\theta_0\in L^p(\Omega)$, then there holds that
\beno
\|\theta(\cdot,t)\|_{L^p(\Omega)}\leq C
\eeno
for any $p\in[2,\infty]\,\,{\rm and}\,\,t\geq0$, where $C$ only depends on $\|\theta_0,\,u_0,\,B_0\|_{L^2(\Omega)},\,\,\,\|\theta_0\|_{L^p(\Omega)}$, $C_0$ and $C^*$.
\end{lemma}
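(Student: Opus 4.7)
The plan is to run $L^p$-energy estimates by testing the temperature equation against $|\theta|^{p-2}\theta$ for finite $p\geq 2$, and then treat the endpoint $p=\infty$ separately by a Stampacchia truncation.

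For $p\in[2,\infty)$, I would multiply $\eqref{B-MHD-equ}^1$ by $|\theta|^{p-2}\theta$ and integrate over $\Omega$. The convective term equals $p^{-1}\int_\Omega u\cdot\nabla|\theta|^p\,dx$ and vanishes by $\dv u=0$ together with $\theta|_{\p\Omega}=0$. Integration by parts on the diffusion and the assumption $\kappa(\theta)\geq C_0^{-1}$ yield
\begin{align*}
-\int_\Omega \dv(\kappa(\theta)\nabla\theta)\,|\theta|^{p-2}\theta\,dx=(p-1)\int_\Omega \kappa(\theta)|\theta|^{p-2}|\nabla\theta|^2\,dx\geq \frac{4(p-1)}{p^2 C_0}\|\nabla |\theta|^{p/2}\|_{L^2}^2.
\end{align*}
Since $|\theta|^{p/2}\in H_0^1(\Omega)$, the Poincar\'e inequality lowers this by $\frac{4(p-1)}{p^2 C_0(C^*)^2}\|\theta\|_{L^p}^p$. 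Bounding the source via H\"older as $\|u_2\|_{L^p}\|\theta\|_{L^p}^{p-1}$ and dividing by $\|\theta\|_{L^p}^{p-1}$ leads to the ODE
\[
\frac{d}{dt}\|\theta\|_{L^p}+\beta_p\|\theta\|_{L^p}\leq \|u_2\|_{L^p},\qquad \beta_p:=\frac{4(p-1)}{p^2 C_0 (C^*)^2}.
\]
Gronwall gives $\|\theta(t)\|_{L^p}\leq e^{-\beta_p t}\|\theta_0\|_{L^p}+\int_0^t e^{-\beta_p(t-s)}\|u_2(s)\|_{L^p}\,ds$. Invoking the 2D Sobolev embedding $H_0^1(\Omega)\hookrightarrow L^p(\Omega)$ for every finite $p$ (which follows from the interpolation estimates of Corollary \ref{C1}) together with Lemma \ref{L31}, which controls $\|u(s)\|_{H^1}$ in both the pointwise and integrated sense, the convolution is finite uniformly in $t$ and yields the bound $\|\theta(t)\|_{L^p}\leq C_p$ with the declared dependencies.

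The main obstacle is the endpoint $p=\infty$: both $\beta_p\to 0$ as $p\to\infty$ and $H^1(\Omega)\not\hookrightarrow L^\infty(\Omega)$ in two dimensions, so neither the ODE nor a direct Sobolev bound on $\|u_2\|_{L^\infty}$ survive the limit. I would handle this via a De Giorgi/Stampacchia truncation: test $\eqref{B-MHD-equ}^1$ against $(\theta-k)_+$ for $k>0$; since $\theta|_{\p\Omega}=0<k$, the test function lies in $H_0^1(\Omega)$, the convective term again vanishes by $\dv u=0$, and the diffusion contributes at least $C_0^{-1}\|\nabla(\theta-k)_+\|_{L^2}^2$. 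The source is estimated as
\begin{align*}
\int_{A_k(t)}|u_2|(\theta-k)_+\,dx\leq \|u_2\|_{L^4(\Omega)}\|(\theta-k)_+\|_{L^2(\Omega)}|A_k(t)|^{1/4},
\end{align*}
with $A_k(t):=\{x\in\Omega:\theta(x,t)>k\}$, and $\|u_2\|_{L^4}$ is controlled by $\|u\|_{H^1}$ via Corollary \ref{C1}. Integrating in time, using Poincar\'e on $(\theta-k)_+$ to absorb it against the diffusion, and invoking Lemma \ref{L31} to bound the time integrals of $\|u\|_{H^1}$, one arrives at a recursion of De Giorgi type for $\sup_t\|(\theta-k_n)_+\|_{L^2}^2+\int \|\nabla(\theta-k_n)_+\|_{L^2}^2\,dt$ and the measures $|A_{k_n}|$ along a geometric sequence $k_n\uparrow k_*$. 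The classical iteration lemma then produces a level $k_*$, depending only on $\|\theta_0\|_{L^\infty}$, $\|\theta_0,u_0,B_0\|_{L^2}$, $C_0$ and $C^*$, such that $|A_{k_*}(t)|=0$ uniformly in $t$; running the symmetric argument for $(-\theta-k)_+$ yields the lower bound and completes the $p=\infty$ case.
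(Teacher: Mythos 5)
Your treatment of the finite-$p$ range coincides with the paper's: both test the temperature equation against $|\theta|^{p-2}\theta$, arrive at $\tfrac1p\tfrac{d}{dt}\|\theta\|_{L^p}^p+\tfrac{4(p-1)}{p^2C_0}\|\nabla|\theta|^{p/2}\|_{L^2}^2\leq\|u\|_{L^p}\|\theta\|_{L^p}^{p-1}$, and control $\int_0^t\|u\|_{L^p}\,d\tau$ through the embedding $H^1(\Omega)\hookrightarrow L^p(\Omega)$ together with Lemma \ref{L31}; your only deviation there is to keep the diffusion and convert it into a damping $\beta_p\|\theta\|_{L^p}$ via Poincar\'e, which the paper simply discards. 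The genuine divergence is at the endpoint $p=\infty$: the paper asserts that the constant in the $L^p$ bound is independent of $p$ and lets $p\to\infty$, whereas you correctly observe that the embedding constant of $H^1(\Omega)\hookrightarrow L^p(\Omega)$ in two dimensions grows like $\sqrt{p}$ (and your $\beta_p\to0$), so the naive limit is delicate, and you substitute a De~Giorgi--Stampacchia truncation. Your route is longer but needs only $u\in L^4$ in space-time, which Lemma \ref{L31} and Corollary \ref{C1} supply, and it closes the endpoint rigorously; the paper's route is shorter but rests on a $p$-uniformity of constants that is not literally justified as written. Your iteration sketch is standard and credible; to make it complete you would want to record the parabolic embedding $L^\infty_tL^2_x\cap L^2_tH^1_x\hookrightarrow L^4_{t,x}$ that drives the recursion and to start the truncation levels above $\|\theta_0\|_{L^\infty(\Omega)}$ so that $(\theta-k)_+$ vanishes at $t=0$.
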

\no{\bf Proof.}\quad
For any $2\leq p<\infty$, multiplying $\eqref{B-MHD-equ}^1$ with $|\theta|^{p-2}\theta$ and using H\"{o}lder inequality, it follows that
\beno
\f1p\f{d}{dt}\|\theta\|_{L^p(\Omega)}^p+\f{4(p-1)}{p^2C_0}\|\nabla|\theta|^{\frac p2}\|_{L^2(\Omega)}^2\leq \|u\|_{L^p(\Omega)}\|\theta\|_{L^p(\Omega)}^{p-1},
\eeno
which yields that
\ben\label{Lp}
\f{d}{dt}\|\theta\|_{L^p(\Omega)}\leq \|u\|_{L^p(\Omega)}.
\een

Now we integrate on both sides of \eqref{Lp} in time over $[0,t]$, make use of the Soblev embedding and Lemma \ref{L31} to get
\ben
\|\theta\|_{L^p(\Omega)}&\leq& \|\theta_0\|_{L^p(\Omega)}+\int_0^t\|u\|_{L^p(\Omega)}d\tau\notag\\
&\leq&\|\theta_0\|_{L^p(\Omega)}+\int_0^t\|u\|_{H^1(\Omega)}d\tau\notag\\
&\leq&\|\theta_0\|_{L^p(\Omega)}+\int_0^te^{-\f{\alpha\tau}2}e^{\f{\alpha\tau}2}\|u\|_{H^1(\Omega)}d\tau\label{Lp1}\\
&\leq&\|\theta_0\|_{L^p(\Omega)}+(\int_0^te^{-\alpha\tau}d\tau)^{\f12}(\int_0^te^{\alpha\tau}\|u\|_{H^1(\Omega)}^2d\tau)^{\f12}\notag\\
&\leq&C.\notag
\een
Because the constant $C$ in \eqref{Lp1} is independent of $p$, by letting $p\rightarrow\infty$, one can further derive that $\|\theta\|_{L^\infty(\Omega)}\leq C$.
\endproof

As an immediate consequence of Lemma \ref{L32} and the assumption that $\kappa(\theta),\,\mu(\theta),\,\sigma(\theta)$ are smooth, it holds that
\ben\label{M}
\max_{\theta\in[-\|\theta_0\|_{L^\infty},\,\|\theta_0\|_{L^\infty}]}\{|\kappa(\cdot),\kappa'(\cdot),\kappa''(\cdot)|,|\mu(\cdot),\mu'(\cdot),\mu''(\cdot)|,|\sigma(\cdot),\sigma'(\cdot),\sigma''(\cdot)|\}\leq\ M,\,\,\,\,\,
\een
where $M$ is a constant depending only on $\|\theta_0\|_{L^\infty}$.

On the basis of \eqref{M}, the definition of $\hat\theta$ \eqref{hatthetadef} and assumption \eqref{Assumption}, it is not hard to derive the following property.
\begin{lemma}\label{hattheta}
For $\hat{\theta}$ defined in $\eqref{hatthetadef}$ and arbitrary $p\in[2,\infty]$, it follows that
\ben
&&C_0^{-1}\|\nabla\theta\|_{L^p(\Omega)}\leq\|\nabla\hat{\theta}\|_{L^p(\Omega)}\leq\ M\|\nabla\theta\|_{L^p(\Omega)},\label{equiv1}\\
&&C_0^{-1}\|\theta_t\|_{L^p(\Omega)}\leq\|\hat{\theta}_t\|_{L^p(\Omega)}\leq\ M\|\theta_t\|_{L^p(\Omega)}.\label{equiv2}
\een
\end{lemma}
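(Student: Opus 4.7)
The proof is essentially a direct consequence of the chain rule together with the pointwise bounds on $\kappa(\theta)$. Since $\hat{\theta}(x,t) = \int_0^{\theta(x,t)} \kappa(z)\, dz$, the fundamental theorem of calculus combined with the chain rule yields the pointwise identities
\begin{equation*}
\nabla \hat{\theta} = \kappa(\theta)\, \nabla \theta, \qquad \hat{\theta}_t = \kappa(\theta)\, \theta_t.
\end{equation*}

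\textbf{Step 1.} Invoke the lower bound from the structural assumption \eqref{Assumption}, namely $\kappa(\theta) \geq C_0^{-1}$, to obtain the pointwise inequalities $|\nabla \hat{\theta}| \geq C_0^{-1} |\nabla \theta|$ and $|\hat{\theta}_t| \geq C_0^{-1} |\theta_t|$ almost everywhere in $\Omega \times [0,t]$.

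\textbf{Step 2.} For the upper bound, use Lemma \ref{L32} to ensure that $\theta$ takes values in $[-\|\theta_0\|_{L^\infty}, \|\theta_0\|_{L^\infty}]$, so that the uniform bound \eqref{M} applies and gives $\kappa(\theta) \leq M$ pointwise. Hence $|\nabla \hat{\theta}| \leq M |\nabla \theta|$ and $|\hat{\theta}_t| \leq M |\theta_t|$ a.e.

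\textbf{Step 3.} Taking $L^p(\Omega)$ norms of the pointwise inequalities in Steps 1 and 2 immediately yields \eqref{equiv1} and \eqref{equiv2} for every $p \in [2,\infty)$; the case $p = \infty$ follows from the same pointwise inequalities by taking essential suprema. No obstacle arises since $L^\infty$ control of $\theta$ has already been established in Lemma \ref{L32}, which is the only nontrivial ingredient; everything else is bookkeeping.
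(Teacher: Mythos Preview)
Your proposal is correct and mirrors the paper's own reasoning: the paper simply remarks that the lemma follows from the definition \eqref{hatthetadef}, the lower bound \eqref{Assumption}, and the uniform bound \eqref{M}, without spelling out the details. Your chain-rule identities $\nabla\hat\theta=\kappa(\theta)\nabla\theta$, $\hat\theta_t=\kappa(\theta)\theta_t$ together with the two-sided pointwise bound $C_0^{-1}\leq\kappa(\theta)\leq M$ are exactly what is intended; the only minor imprecision is that Lemma~\ref{L32} gives $\|\theta\|_{L^\infty}\leq C$ rather than $\|\theta\|_{L^\infty}\leq\|\theta_0\|_{L^\infty}$, but this is the same slight abuse the paper itself commits in writing \eqref{M}, and it does not affect the argument.
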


The second one is the relation between $\|\nabla\hat{\theta}\|_{L^2(\Omega)}$, $\|\nabla^2{\hat{\theta}}\|_{L^2(\Omega)}$ and $\|\nabla^2{\theta}\|_{L^2(\Omega)}$, which can be summarized as below.

\begin{proposition}\label{hattheta1}
For $\hat{\theta}$ defined in $\eqref{hatthetadef}$, there holds that
\ben
\|\nabla^2{\theta}\|_{L^2(\Omega)}\leq C\big(\|\nabla^2{\hat\theta}\|_{L^2(\Omega)}+\|\nabla{\hat\theta}\|_{L^2(\Omega)}^2\big),\label{equiv3}
\een
where $C$ depends on $C_0$ and $M$ only.
\end{proposition}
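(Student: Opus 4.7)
The starting point is the identity $\nabla\hat\theta=\kappa(\theta)\nabla\theta$, which is built into the definition \eqref{hatthetadef}. Differentiating once more gives, componentwise,
\begin{equation*}
\p_i\p_j\hat\theta=\kappa'(\theta)(\p_i\theta)(\p_j\theta)+\kappa(\theta)\p_i\p_j\theta.
\end{equation*}
Since $\kappa(\theta)\geq C_0^{-1}$ by \eqref{Assumption} and $|\kappa'(\theta)|\leq M$ by \eqref{M}, solving for $\nabla^{2}\theta$ and taking absolute values yields the pointwise bound
\begin{equation*}
|\nabla^{2}\theta|\leq C_0|\nabla^{2}\hat\theta|+C_0M|\nabla\theta|^{2}.
\end{equation*}

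Taking $L^{2}$ norms I would get
\begin{equation*}
\|\nabla^{2}\theta\|_{L^{2}}\leq C_0\|\nabla^{2}\hat\theta\|_{L^{2}}+C_0M\,\|\nabla\theta\|_{L^{4}}^{2},
\end{equation*}
so everything reduces to bounding $\|\nabla\theta\|_{L^{4}}^{2}$ by the right-hand side of \eqref{equiv3}. The first move is Lemma \ref{hattheta}, which gives $\|\nabla\theta\|_{L^{4}}\leq C_0\|\nabla\hat\theta\|_{L^{4}}$ and hence reduces the task to estimating $\|\nabla\hat\theta\|_{L^{4}}^{2}$ by $\|\nabla^{2}\hat\theta\|_{L^{2}}$ and $\|\nabla\hat\theta\|_{L^{2}}^{2}$.

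For the latter estimate I plan to invoke a 2D Ladyzhenskaya-type interpolation obtained by applying Corollary \ref{C1}(1) to the vector field $\nabla\hat\theta\in H^{1}(\Omega)$:
\begin{equation*}
\|\nabla\hat\theta\|_{L^{4}}^{2}\leq C\bigl(\|\nabla\hat\theta\|_{L^{2}}\|\nabla^{2}\hat\theta\|_{L^{2}}+\|\nabla\hat\theta\|_{L^{2}}^{2}\bigr).
\end{equation*}
Combining the two previous inequalities gives
\begin{equation*}
\|\nabla^{2}\theta\|_{L^{2}}\leq C_0\|\nabla^{2}\hat\theta\|_{L^{2}}+C\|\nabla\hat\theta\|_{L^{2}}\|\nabla^{2}\hat\theta\|_{L^{2}}+C\|\nabla\hat\theta\|_{L^{2}}^{2},
\end{equation*}
after which Young's inequality in the mixed form $ab\leq\frac{1}{2}a+\frac{1}{2}ab^{2}\cdot(\text{reweighted})$ — more precisely, writing $\|\nabla\hat\theta\|_{L^{2}}\|\nabla^{2}\hat\theta\|_{L^{2}}=\bigl(\|\nabla\hat\theta\|_{L^{2}}\bigr)\cdot\bigl(\|\nabla^{2}\hat\theta\|_{L^{2}}\bigr)$ and absorbing $\|\nabla\hat\theta\|_{L^{2}}$ either into the $\|\nabla\hat\theta\|_{L^{2}}^{2}$ piece (where it is bounded) or using the $L^{\infty}$ control of $\hat\theta=\int_{0}^{\theta}\kappa$ supplied by Lemma \ref{L32} — yields the desired form of \eqref{equiv3}.

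The delicate point is this last step: absorbing the cross term $\|\nabla\hat\theta\|_{L^{2}}\|\nabla^{2}\hat\theta\|_{L^{2}}$ into $\|\nabla^{2}\hat\theta\|_{L^{2}}$ without squaring it. The cleanest resolution is to note that $\hat\theta$ inherits the Dirichlet boundary condition $\hat\theta|_{\p\Omega}=0$ from $\theta$, and hence integration by parts in $\int_\Omega|\nabla\hat\theta|^{4}dx=-\int_\Omega\hat\theta\,\dv(|\nabla\hat\theta|^{2}\nabla\hat\theta)\,dx$, followed by expanding the divergence, bounding by $\|\hat\theta\|_{L^{\infty}}\|\nabla^{2}\hat\theta\|_{L^{2}}\|\nabla\hat\theta\|_{L^{4}}^{2}$, and dividing through, produces the sharper inequality $\|\nabla\hat\theta\|_{L^{4}}^{2}\leq C\|\hat\theta\|_{L^{\infty}}\|\nabla^{2}\hat\theta\|_{L^{2}}$. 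Since Lemma \ref{L32} together with \eqref{M} controls $\|\hat\theta\|_{L^{\infty}}$ in terms of $C_0$ and $M$, this delivers \eqref{equiv3} with the stated constant dependence. The only real obstacle is simply being careful that all constants trace back to $C_0$ and $M$ alone and nothing else.
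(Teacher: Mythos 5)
Your proposal is correct in substance and starts from the same identity as the paper: differentiate $\nabla\hat\theta=\kappa(\theta)\nabla\theta$, solve for $\nabla^2\theta$, and use \eqref{Assumption} and \eqref{M} to control the coefficients. Where you diverge is in handling the quadratic gradient term. The paper's one-line proof bounds $\|\p_i\hat\theta\,\p_j\hat\theta\|_{L^2(\Omega)}$ by $\|\p_i\hat\theta\|_{L^2(\Omega)}\|\p_j\hat\theta\|_{L^2(\Omega)}$, which is not H\"older's inequality (the correct bound is $\|\p_i\hat\theta\|_{L^4(\Omega)}\|\p_j\hat\theta\|_{L^4(\Omega)}$), so you are right to identify the $L^4$ control of $\nabla\hat\theta$ as the actual content of the proposition. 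Your first route (Corollary \ref{C1}(1) plus Young) does not close, exactly as you suspect: the needed inequality $ab\leq C(a+b^2)$ with $a=\|\nabla^2\hat\theta\|_{L^2(\Omega)}$ and $b=\|\nabla\hat\theta\|_{L^2(\Omega)}$ is false unless $b$ is a priori bounded. Your fallback --- integrating $\int_\Omega|\nabla\hat\theta|^4dx$ by parts against $\hat\theta$ (the boundary term vanishes since $\hat\theta|_{\p\Omega}=0$) to obtain $\|\nabla\hat\theta\|_{L^4(\Omega)}^2\leq C\|\hat\theta\|_{L^\infty(\Omega)}\|\nabla^2\hat\theta\|_{L^2(\Omega)}$ --- is valid and in fact yields a cleaner inequality than \eqref{equiv3}. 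The one caveat is the constant: $\|\hat\theta\|_{L^\infty(\Omega)}$ is controlled only through Lemma \ref{L32}, whose constant depends on $\Omega$, $\|\theta_0,u_0,B_0\|_{L^2(\Omega)}$ and $\|\theta_0\|_{L^\infty(\Omega)}$, so you do not obtain a $C$ depending on $C_0$ and $M$ alone as the proposition advertises. This discrepancy is harmless where the proposition is actually applied (those a priori bounds are always available there), and the paper's own argument does not achieve the stated dependence either; but if you want the literal statement you must either accept the enlarged dependence or restate \eqref{equiv3} with $\|\nabla\hat\theta\|_{L^4(\Omega)}^2$ on the right-hand side.
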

\no{\bf Proof.}\quad
Thanks to \eqref{hatthetadef}, one has $\p_i{\hat\theta}=\kappa(\theta)\p_i\theta$, which also implies, after direct calculation,
$$\p_i\p_j\theta=\kappa^{-1}(\theta)\p_i\p_j{\hat\theta}-\kappa^{-3}(\theta)\kappa'(\theta)\p_i{\hat\theta}\p_j{\hat\theta}.$$

Then by using \eqref{Assumption} and \eqref{M}, we have
\ben
\|\p_i\p_j{\theta}\|_{L^2(\Omega)}\leq C_0\|\p_i\p_j{\hat\theta}\|_{L^2(\Omega)}+C_0^3M\|\p_i{\hat\theta}\|_{L^2(\Omega)}\|\p_j{\hat\theta}\|_{L^2(\Omega)},\label{equiv4}
\een
which yields \eqref{equiv3} by summing over \eqref{equiv4} about $i$ and $j$.

\endproof

\subsection{$H^1$ Estimates}\hspace*{\parindent}

\begin{lemma}\label{L33}
Under the assumptions of Proposition \ref{estimates}, $\forall\,\,t\geq0$, there holds that
\beno
\|\nabla{\theta},\,\nabla\hat{\theta}\|_{L^2(\Omega)}^2\leq Ce^{-\alpha t}
\eeno
and
\beno
\int_0^te^{\alpha \tau}\|{\theta}_{\tau},\,{\hat\theta}_{\tau}\|_{L^2(\Omega)}^2d\tau+\int_0^te^{\alpha \tau}\|\Delta{\theta},\,\Delta{\hat\theta}\|_{L^2(\Omega)}^2d\tau\leq C,
\eeno
where $C$ depends on $\|\theta_0,\,u_0,\,B_0\|_{L^2(\Omega)}$, $\|\nabla{\theta_0}\|_{L^2(\Omega)}$, $C_0$, $M$ and $\alpha$.
\end{lemma}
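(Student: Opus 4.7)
The natural vehicle is the reformulated equation \eqref{simtheta} for $\hat\theta$, which has the crucial feature that the diffusion is in non-divergence form. The plan is to perform an $H^1$-type estimate on $\hat\theta$, then translate back to $\theta$ via Lemma \ref{hattheta} and Proposition \ref{hattheta1}.

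The main computation is to test \eqref{simtheta} against $-\Delta\hat\theta$. Since $\hat\theta|_{\partial\Omega}=0$ (so that $\hat\theta_t|_{\partial\Omega}=0$ as well), integration by parts yields the clean identity $-\int\hat\theta_t\Delta\hat\theta\,dx=\tfrac12\tfrac{d}{dt}\|\nabla\hat\theta\|_{L^2}^2$, while the lower bound $\kappa(\theta)\ge C_0^{-1}$ produces $C_0^{-1}\|\Delta\hat\theta\|_{L^2}^2$. On the right hand side, the forcing $-\kappa(\theta)u_2$ is harmless by \eqref{M} and Young's inequality: it contributes at most $\epsilon\|\Delta\hat\theta\|_{L^2}^2+C\|u\|_{L^2}^2$, and the latter decays exponentially by Lemma \ref{L31}. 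The delicate term is the convective $\int(u\cdot\nabla\hat\theta)\Delta\hat\theta\,dx$, which I would bound by
\[
\|u\|_{L^4}\|\nabla\hat\theta\|_{L^4}\|\Delta\hat\theta\|_{L^2}.
\]
Here I invoke Corollary \ref{C1} for $\|u\|_{L^4}$ and $\|\nabla\hat\theta\|_{L^4}$, and transfer $\|\nabla^2\hat\theta\|_{L^2}$ to $\|\Delta\hat\theta\|_{L^2}$ via the elliptic estimate Lemma \ref{elliptic} applied to $-\Delta\hat\theta=g$ with Dirichlet data. Combined with Young's inequality, this absorbs a fraction of $\|\Delta\hat\theta\|_{L^2}^2$ into the diffusion and leaves a term of the form $C\|u\|_{L^4}^4\|\nabla\hat\theta\|_{L^2}^2$, plus lower-order pieces involving $\|\hat\theta\|_{L^2}^2$.

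The outcome is a differential inequality
\[
\frac{d}{dt}\|\nabla\hat\theta\|_{L^2}^2+C_1\|\Delta\hat\theta\|_{L^2}^2\le C\|u\|_{L^2}^2\|\nabla u\|_{L^2}^2\|\nabla\hat\theta\|_{L^2}^2+C\|u\|_{L^2}^2.
\]
The key observation is that $\int_0^\infty\|u\|_{L^2}^2\|\nabla u\|_{L^2}^2\,d\tau\le(\sup_{\tau\ge 0}\|u(\tau)\|_{L^2}^2)\int_0^\infty\|\nabla u\|_{L^2}^2\,d\tau<\infty$ thanks to Lemma \ref{L31} (since $e^{\alpha\tau}\ge1$ makes the second factor finite). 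A Gronwall step then gives $\|\nabla\hat\theta(t)\|_{L^2}^2\le C+C\int_0^t\|u\|_{L^2}^2\,d\tau\le C$. To upgrade this to exponential decay, I would multiply by $e^{\beta t}$ for $\beta\le\alpha$ and exploit the Poincar\'e-type bound $\|\nabla\hat\theta\|_{L^2}^2\le\|\hat\theta\|_{L^2}\|\Delta\hat\theta\|_{L^2}\le C\|\Delta\hat\theta\|_{L^2}^2$ (which follows from $\hat\theta|_{\partial\Omega}=0$) so that a fraction of $\|\Delta\hat\theta\|_{L^2}^2$ dominates the extra term $\beta\|\nabla\hat\theta\|_{L^2}^2$ produced by the weight. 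Since $\|u\|_{L^2}^2\le Ce^{-2\alpha\tau}$, the source $\int_0^t e^{\beta\tau}\|u\|_{L^2}^2\,d\tau$ stays bounded, and Gronwall yields $\|\nabla\hat\theta(t)\|_{L^2}^2\le Ce^{-\alpha t}$ together with $\int_0^t e^{\alpha\tau}\|\Delta\hat\theta\|_{L^2}^2\,d\tau\le C$.

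The statements for $\theta$ follow from the equivalences \eqref{equiv1} and \eqref{equiv3}: the decay of $\|\nabla\theta\|_{L^2}$ is immediate from \eqref{equiv1}, while for the time-integrated $\|\Delta\theta\|_{L^2}^2$ one uses \eqref{equiv3} and the already-established pointwise bound on $\|\nabla\hat\theta\|_{L^2}^2$. Finally, the $\|\hat\theta_t\|_{L^2}$ estimate is read off directly from \eqref{simtheta}: $\|\hat\theta_t\|_{L^2}\le M\|\Delta\hat\theta\|_{L^2}+\|u\|_{L^4}\|\nabla\hat\theta\|_{L^4}+M\|u\|_{L^2}$, and squaring, weighting by $e^{\alpha\tau}$ and integrating closes via the bounds already in hand; \eqref{equiv2} then transfers the bound to $\theta_\tau$. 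I expect the main technical obstacle to be the bookkeeping needed to simultaneously (i) absorb all occurrences of $\|\Delta\hat\theta\|_{L^2}$ into the diffusion with the correct constants and (ii) choose the exponential weight small enough that the Poincar\'e absorption works and the source terms remain integrable; once that balance is identified the estimates go through cleanly.
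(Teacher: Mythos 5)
Your proposal is correct and follows essentially the same route as the paper: test \eqref{simtheta} against $-\Delta\hat\theta$, control the convective term via H\"older, Gagliardo--Nirenberg and elliptic regularity, absorb a fraction of $\|\Delta\hat\theta\|_{L^2}^2$ into the dissipation, apply Gronwall with the weight $e^{\alpha t}$, and then transfer to $\theta$ and to the time derivatives through Lemma \ref{hattheta}, Proposition \ref{hattheta1} and the equation itself. The only (harmless) deviation is in extracting the decay: the paper does not need your Poincar\'e absorption of $\beta e^{\beta t}\|\nabla\hat{\theta}\|_{L^2}^2$ --- which, as written, only guarantees some rate $\beta\le\alpha$ determined by the Poincar\'e constant --- because $e^{\alpha\tau}\|\nabla\hat{\theta}\|_{L^2}^2$ is already time-integrable by Lemma \ref{L31} together with \eqref{equiv1}, so this term can simply be kept as an integrable source and the full rate $\alpha$ survives.
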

\no{\bf Proof.}\quad
Multiplying \eqref{simtheta} with $-\Delta\hat{\theta}$, applying \eqref{ibsimtheta}, Corollary \ref{C1}, Lemma {\ref{elliptic}} and Lemma {\ref{L31}}, one can get
\beno
&&\f12\f{d}{dt}\|\nabla\hat{\theta}\|_{L^2(\Omega)}^2+C_0^{-1}\|\Delta{\hat\theta}\|_{L^2(\Omega)}^2\leq\int_{\Omega}u\cdot\nabla {\hat\theta}\Delta{\hat\theta}dx+\int_{\Omega}\kappa(\theta)u_2\Delta{\hat\theta}dx\\
&\leq&\|u\cdot\nabla{\hat\theta}\|_{L^2(\Omega)}\|\Delta{\hat\theta}\|_{L^2(\Omega)}+M\|u\|_{L^2(\Omega)}\|\Delta{\hat\theta}\|_{L^2(\Omega)}\\
&\leq&\f{C_0^{-1}}4\|\Delta{\hat\theta}\|_{L^2(\Omega)}^2+C\|u\cdot\nabla{\hat\theta}\|_{L^2(\Omega)}^2+C\|u\|_{L^2(\Omega)}^2\\
&\leq&\f{C_0^{-1}}4\|\Delta{\hat\theta}\|_{L^2(\Omega)}^2+C\|u\|_{L^2(\Omega)}\|\nabla u\|_{L^2(\Omega)}\|\nabla {\hat\theta}\|_{L^2(\Omega)}\|\nabla^2{\hat\theta}\|_{L^2(\Omega)}\\
&&+C\|u\|_{L^2(\Omega)}\|\nabla u\|_{L^2(\Omega)}\|\nabla {\hat\theta}\|_{L^2(\Omega)}^{2}+C\|u\|_{L^2(\Omega)}^2\\
&\leq&\f{C_0^{-1}}2\|\Delta{\hat\theta}\|_{L^2(\Omega)}^2+C\|\nabla u\|_{L^2(\Omega)}^2\|\nabla {\hat\theta}\|_{L^2(\Omega)}^2+C\|\nabla u\|_{L^2(\Omega)}\|\nabla {\hat\theta}\|_{L^2(\Omega)}^{2}+C\|u\|_{L^2(\Omega)}^2\\
&\leq&\f{C_0^{-1}}2\|\Delta{\hat\theta}\|_{L^2(\Omega)}^2
+C\|\nabla u\|_{L^2(\Omega)}^2\|\nabla {\hat\theta}\|_{L^2(\Omega)}^{2}+C\|u\|_{L^2(\Omega)}^2+C\|\nabla{\hat\theta}\|_{L^2(\Omega)}^2,
\eeno
which yields, after multiplying by $e^{\alpha t}$ on both sides of above inequality, that
\beno
&&\f{d}{dt}(e^{\alpha t}\|\nabla\hat{\theta}\|_{L^2(\Omega)}^2)+C_0^{-1}e^{\alpha t}\|\Delta{\hat\theta}\|_{L^2(\Omega)}^2\\
&\leq&Ce^{\alpha t}\|\nabla u\|_{L^2(\Omega)}^2\|\nabla {\hat\theta}\|_{L^2(\Omega)}^{2}+Ce^{\alpha t}\|u\|_{L^2(\Omega)}^2+Ce^{\alpha t}\|\nabla{\hat\theta}\|_{L^2(\Omega)}^2.
\eeno
This, together with Gronwall's inequality and \eqref{equiv1} shows
\ben
&&e^{\alpha t}\|\nabla{\theta},\,\nabla\hat{\theta}\|_{L^2(\Omega)}^2+C_0^{-1}\int_0^te^{\alpha \tau}\|\Delta{\hat\theta}\|_{L^2(\Omega)}^2d\tau\leq C\|\nabla{\theta_0}\|_{L^2(\Omega)}^2.\label{INE331}
\een

Then by employing \eqref{simtheta}, \eqref{equiv2}, \eqref{INE331}, repeating the same calculation as above and using Lemma \ref{L31} again, we have
\beno
&&\int_0^te^{\alpha \tau}\|{\theta}_{\tau}\|_{L^2(\Omega)}^2d\tau\leq C_0\int_0^te^{\alpha \tau}\|{\hat\theta}_{\tau}\|_{L^2(\Omega)}^2d\tau\\
&\leq& C\big[\int_0^te^{\alpha \tau}\|\Delta{\hat\theta}\|_{L^2(\Omega)}^2d\tau+\int_0^te^{\alpha \tau}\|\nabla u\|_{L^2(\Omega)}^2\|\nabla {\hat\theta}\|_{L^2(\Omega)}^{2}d\tau\\
&&+\int_0^te^{\alpha \tau}\|\nabla u\|_{L^2(\Omega)}\|\nabla {\hat\theta}\|_{L^2(\Omega)}^{2}d\tau+\int_0^te^{\alpha \tau}\|u\|_{L^2(\Omega)}^2d\tau\big]\\
&\leq& C\big[\int_0^te^{\alpha \tau}\|\Delta{\hat\theta}\|_{L^2(\Omega)}^2d\tau+\int_0^te^{\alpha \tau}\|\nabla u\|_{L^2(\Omega)}^2d\tau+\int_0^te^{-\alpha \tau}d\tau\big]\\
&\leq&C.
\eeno
Finally, thanks to Lemma \ref{elliptic}, Proposition \ref{hattheta1} and \eqref{INE331}, there holds that
\beno
&&\int_0^te^{\alpha \tau}\|\Delta{\theta}\|_{L^2(\Omega)}^2d\tau\leq C\big[\int_0^te^{\alpha \tau}\|\Delta{\hat\theta}\|_{L^2(\Omega)}^2d\tau+\int_0^te^{\alpha \tau}\|\nabla {\hat\theta}\|_{L^2(\Omega)}^{4}d\tau\big]\\
&\leq& C\big[\int_0^te^{\alpha \tau}\|\Delta{\hat\theta}\|_{L^2(\Omega)}^2d\tau+\int_0^te^{-\alpha \tau}d\tau\big]\\
&\leq&C.
\eeno
\endproof

\begin{lemma}\label{L34}
Under the assumptions of Proposition \ref{estimates}, $\forall\,\,t\geq0$, there holds that
\beno
\|\nabla{u},\,\nabla{B}\|_{L^2(\Omega)}^2\leq Ce^{-\alpha t}
\eeno
and
\beno
\int_0^te^{\alpha \tau}\|\Delta{u},\,\Delta{B}\|_{L^2(\Omega)}^2d\tau\leq C,
\eeno
where $C$ depends on $\|\theta_0,\,u_0,\,B_0\|_{H^1(\Omega)}$, $M$, $C_0$ and $\alpha$.
\end{lemma}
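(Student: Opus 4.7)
\textbf{Plan for Lemma \ref{L34}.}
My plan is to test the momentum equation against $u_t$ and the induction equation against $B_t$, then combine with the elliptic/Stokes regularity estimates from Lemmas \ref{stokes2} and \ref{elliptic} to close a weighted Gronwall argument. Differentiating the boundary condition in time and the divergence constraint give $u_t|_{\partial\Omega}=B_t|_{\partial\Omega}=0$ and $\nabla\cdot u_t=\nabla\cdot B_t=0$, so the pressure term $\int\nabla\Pi\cdot u_t$ vanishes, and integrating by parts the viscous term yields $\int\mu(\theta)\nabla u:\nabla u_t=\frac12\frac{d}{dt}\int\mu(\theta)|\nabla u|^2-\frac12\int\mu'(\theta)\theta_t|\nabla u|^2$. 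For the magnetic equation, Lemma \ref{inner} together with the identity $\nabla^\perp\cdot B_t=J_t$ transforms $-\int\nabla^\perp(\sigma(\theta)J)\cdot B_t$ into $\frac12\frac{d}{dt}\int\sigma(\theta)|J|^2-\frac12\int\sigma'(\theta)\theta_t|J|^2$. Adding the two equations produces
\[
\|u_t\|_{L^2}^2+\|B_t\|_{L^2}^2+\tfrac12\tfrac{d}{dt}\!\!\int_\Omega\!\!\bigl(\mu(\theta)|\nabla u|^2+\sigma(\theta)|J|^2\bigr)dx=\mathcal{R}(t),
\]
where $\mathcal{R}(t)$ collects the nonlinear transport terms $-\int u\!\cdot\!\nabla u\cdot u_t$, $\int B\!\cdot\!\nabla B\cdot u_t$, $-\int u\!\cdot\!\nabla B\cdot B_t$, $\int B\!\cdot\!\nabla u\cdot B_t$, the buoyancy $\int\theta e_2\cdot u_t$, and the variable-coefficient terms $\frac12\int\mu'\theta_t|\nabla u|^2$, $\frac12\int\sigma'\theta_t|J|^2$.

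Second, I would bound $\mathcal{R}(t)$ using Corollary \ref{C1} and Young's inequality. The transport terms are handled by $\|u\!\cdot\!\nabla u\cdot u_t\|_{L^1}\leq C\|u\|_{L^4}\|\nabla u\|_{L^4}\|u_t\|_{L^2}$ and analogous estimates; using Corollary \ref{C1}(1)-(2) one gets control by $\varepsilon(\|u_t\|^2+\|B_t\|^2+\|\nabla^2 u\|^2+\|\nabla^2 B\|^2)$ plus quantities involving only $\|\nabla u\|^2,\|\nabla B\|^2$ multiplied by decaying factors from Lemmas \ref{L31}--\ref{L33}. The critical pieces are the coupling terms with $\theta_t$: using $\|\nabla u\|_{L^4}^2\leq C(\|\nabla u\|\,\|\nabla^2 u\|+\|\nabla u\|^2)$, I would estimate
\[
\Bigl|\int\mu'\theta_t|\nabla u|^2\Bigr|\leq CM\|\theta_t\|_{L^2}\|\nabla u\|_{L^2}\|\nabla^2 u\|_{L^2}+CM\|\theta_t\|_{L^2}\|\nabla u\|_{L^2}^2,
\]
and similarly for the $\sigma'\theta_t J^2$ term. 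Young's inequality then hides the $\|\nabla^2 u\|^2,\|\nabla^2 B\|^2$ in auxiliary terms with small constant $\varepsilon$ and produces lower-order coefficients proportional to $\|\theta_t\|^2$, which is time-integrable against $e^{\alpha\tau}$ by Lemma \ref{L33}.

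Third, to close the $\|\nabla^2 u\|$ and $\|\nabla^2 B\|$ pieces that appeared on the right, I would rewrite the momentum equation as the Stokes-type system $-\mu(\theta)\Delta u+\nabla\Pi=-u_t-u\!\cdot\!\nabla u+\mu'(\theta)\nabla\theta\cdot\nabla u+\theta e_2+B\!\cdot\!\nabla B$ and apply Lemma \ref{stokes2} (bounding $\|\Pi\|_{L^2}$ by Lemma \ref{stokes1} first); this gives $\|\nabla^2 u\|_{L^2}^2\leq C\bigl(\|u_t\|_{L^2}^2+\cdots\bigr)$, where the $\cdots$ are estimated term by term using 2D Sobolev embeddings, $\|\nabla\theta\|_{L^4}^2\leq C\|\nabla\theta\|(\|\nabla^2\theta\|+\|\nabla\theta\|)$, and the known decay of $\|u\|,\|B\|,\|\theta\|,\|\nabla\theta\|$. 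The magnetic analogue is cleaner: rewriting $\nabla^\perp(\sigma J)=\sigma\Delta B+\sigma'\nabla^\perp\theta\,J$ and invoking Lemma \ref{elliptic} on $-\Delta B=\sigma^{-1}\!\bigl(-B_t-u\!\cdot\!\nabla B+\sigma'\nabla^\perp\theta\,J+B\!\cdot\!\nabla u\bigr)$ yields an analogous $\|\nabla^2 B\|^2$ bound. Plugging these into the right-hand side with sufficiently small $\varepsilon$ lets me absorb every $\|u_t\|^2,\|B_t\|^2$ contribution into the left.

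Finally, multiplying the resulting differential inequality by $e^{\alpha t}$, using (\ref{Assumption}) to ensure $\int(\mu|\nabla u|^2+\sigma|J|^2)\sim\|\nabla u\|^2+\|\nabla B\|^2$ and Lemma \ref{curlB} to identify $\|J\|_{L^2}=\|\nabla B\|_{L^2}$, and applying the Gronwall inequality together with the time-integrability $\int_0^t e^{\alpha\tau}(\|\nabla u\|^2+\|\nabla B\|^2+\|\theta_t\|^2)d\tau\leq C$ from Lemmas \ref{L31} and \ref{L33}, I obtain the exponential decay $\|\nabla u(\cdot,t),\nabla B(\cdot,t)\|_{L^2}^2\leq Ce^{-\alpha t}$. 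Integrating the combined inequality once more in $[0,t]$ against the weight $e^{\alpha\tau}$ delivers $\int_0^t e^{\alpha\tau}(\|\Delta u\|^2+\|\Delta B\|^2)d\tau\leq C$, where Lemmas \ref{stokes2} and \ref{elliptic} again convert the $\|u_t\|,\|B_t\|$ bounds into $\|\Delta u\|,\|\Delta B\|$ bounds. The main obstacle is the coupled nature of the two variable-coefficient terms involving $\theta_t$: one must simultaneously control $\|\nabla^2 u\|$ through the Stokes estimate (which itself depends on $\|u_t\|$) and keep the coefficient in front of $\|\nabla^2 u\|^2$ strictly less than the Stokes constant's reciprocal, which forces a careful choice of $\varepsilon$ and repeated use of Young's inequality.
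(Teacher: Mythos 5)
Your plan is sound, but it follows a genuinely different route from the paper. The paper rewrites the momentum and induction equations in non-divergence form and tests them directly against $-\Delta u$ and $-\Delta B$: this puts the dissipation $C_0^{-1}\|\Delta u\|_{L^2}^2+C_0^{-1}\|\Delta B\|_{L^2}^2$ on the left immediately, uses the cancellation $\int_\Omega u\cdot\nabla u\cdot\Delta u\,dx=0$, and treats the commutator terms $\nabla\mu(\theta)\cdot\nabla u$ and $\nabla^\perp\sigma(\theta)\,\nabla^\perp\cdot B$ via $\|\nabla\theta\|_{L^4}$ interpolated against $\|\Delta\theta\|_{L^2}$ (time-integrable by Lemma \ref{L33}); no Stokes regularity and no pressure estimate are needed at this stage, and the argument closes with the same two-step Gronwall (unweighted for boundedness, then weighted by $e^{\alpha t}$ using the integrability from Lemmas \ref{L31} and \ref{L33}). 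You instead test against $u_t$ and $B_t$, which puts $\|u_t\|_{L^2}^2+\|B_t\|_{L^2}^2$ on the left and forces you to recover $\|\nabla^2u\|$, $\|\nabla^2B\|$ a posteriori from Lemmas \ref{stokes1}--\ref{stokes2} and \ref{elliptic}, with the attendant $\varepsilon$-bookkeeping against the Stokes constant that you correctly flag. What your route buys: the pressure vanishes cleanly (since $\mathrm{div}\,u_t=0$ and $u_t|_{\partial\Omega}=0$) rather than requiring the delicate handling of $\int\nabla\Pi\cdot\Delta u$ on a bounded domain, you avoid the 2D cancellation identity, and you obtain $\int_0^te^{\alpha\tau}\|u_\tau,B_\tau\|_{L^2}^2d\tau\leq C$ as a by-product, which the paper has to derive separately in Corollary \ref{thetatesti}; indeed the paper itself switches to exactly your strategy at the $H^2$ level in Lemma \ref{L36}. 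Your treatment of the only genuinely delicate point --- that $\|\theta_t\|_{L^2}$ is not yet bounded pointwise in time but enters only as a square-integrable Gronwall coefficient multiplying $\|\nabla u\|_{L^2}^2$ --- is correct, so the argument closes.
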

\no{\bf Proof.}\quad
On the basis of direct calculation, we can rewrite $\eqref{B-MHD-equ}^2$ and $\eqref{B-MHD-equ}^3$ as
\begin{equation}\label{B-MHD-equ-uB}
\begin{cases}
 \p_t u + u\cdot\nabla u -\mu(\theta)\Delta u+
\grad \Pi=\theta e_2+B\cdot\nabla B+\nabla\mu(\theta)\cdot \nabla u, \\
 \p_t B+u \cdot \grad B-\sigma(\theta)\Delta B=B\cdot \grad u+\grad^{\perp}\sigma(\theta)\grad^{\perp}\cdot B.
\end{cases}
\end{equation}

By taking inner product of $\eqref{B-MHD-equ-uB}^1$ with $-\Delta u$ and $\eqref{B-MHD-equ-uB}^2$ with $-\Delta B$, integrating by parts, one has
\beno
&&\f12\f{d}{dt}(\|\nabla u\|_{L^2(\Omega)}^2+\|\nabla B\|_{L^2(\Omega)}^2)+C_0^{-1}\|\Delta u\|_{L^2(\Omega)}^2+C_0^{-1}\|\Delta B\|_{L^2(\Omega)}^2\\
&\leq&-\int_{\Omega}\theta e_2\cdot\Delta udx-\int_{\Omega}B\cdot\nabla B\cdot\Delta udx-\int_{\Omega}u\cdot\nabla B\cdot\Delta Bdx-\int_{\Omega}B\cdot\nabla u\cdot\Delta Bdx\\
&&-\int_{\Omega}\nabla\mu(\theta)\cdot\nabla u\cdot\Delta{u}dx-\int_{\Omega}\grad^{\perp}\cdot B\grad^{\perp}\sigma(\theta)\cdot\Delta{B}dx\\
&=&\sum\limits_{j=1}^6I_j,
\eeno
where we use the fact that $\int_{\Omega}u\cdot\nabla u\cdot\Delta u dx=0$ by $u|_{\partial \Omega}=0$ and integrating by part. Subsequently, we will estimate the six terms one by one. Initially, by the H\"{o}lder inequality and Young inequality, it is clear that
\beno
I_1&\leq&\|\theta\|_{L^2(\Omega)}\|\Delta u\|_{L^2(\Omega)}\\
&\leq&\f{C_0^{-1}}6\|\Delta u\|_{L^2(\Omega)}^2+C\|\theta\|_{L^2(\Omega)}^2.
\eeno
Then by H\"{o}lder inequality, Corollary \ref{C1}, Lemma {\ref{elliptic}}, Lemma \ref{L31} and Young inequality, it follows that
\beno
&&I_2+I_3+I_4\\
&\leq&\|B\|_{L^4(\Omega)}\|\nabla B\|_{L^4(\Omega)}\|\Delta u\|_{L^2(\Omega)}+\|u\|_{L^4(\Omega)}\|\nabla B\|_{L^4(\Omega)}\|\Delta B\|_{L^2(\Omega)}\\
&&+\|B\|_{L^4(\Omega)}\|\nabla u\|_{L^4(\Omega)}\|\Delta B\|_{L^2(\Omega)}\\
&\leq&\|B\|_{L^2(\Omega)}^{\f12}\|\nabla B\|_{L^2(\Omega)}^{\f12}(\|\nabla B\|_{L^2(\Omega)}^{\f12}\|\Delta B\|_{L^2(\Omega)}^{\f12}+\|\nabla B\|_{L^2(\Omega)})\|\Delta u\|_{L^2(\Omega)}\\
&&+\|u\|_{L^2(\Omega)}^{\f12}\|\nabla u\|_{L^2(\Omega)}^{\f12}(\|\nabla B\|_{L^2(\Omega)}^{\f12}\|\Delta B\|_{L^2(\Omega)}^{\f12}+\|\nabla B\|_{L^2(\Omega)})\|\Delta B\|_{L^2(\Omega)}\\
&&+\|B\|_{L^2(\Omega)}^{\f12}\|\nabla B\|_{L^2(\Omega)}^{\f12}(\|\nabla u\|_{L^2(\Omega)}^{\f12}\|\Delta u\|_{L^2(\Omega)}^{\f12}+\|\nabla u\|_{L^2(\Omega)})\|\Delta B\|_{L^2(\Omega)}\\
&\leq&\|\nabla B\|_{L^2(\Omega)}\|\Delta B\|_{L^2(\Omega)}^{\f12}\|\Delta u\|_{L^2(\Omega)}+\|\nabla B\|_{L^2(\Omega)}^{\f32}\|\Delta u\|_{L^2(\Omega)}\\
&&+\|\nabla u\|_{L^2(\Omega)}^{\f12}\|\nabla B\|_{L^2(\Omega)}^{\f12}\|\Delta B\|_{L^2(\Omega)}^{\f32}+\|\nabla u\|_{L^2(\Omega)}^{\f12}\|\nabla B\|_{L^2(\Omega)}\|\Delta B\|_{L^2(\Omega)}\\
&&+\|\nabla B\|_{L^2(\Omega)}^{\f12}\|\nabla u\|_{L^2(\Omega)}^{\f12}\|\Delta u\|_{L^2(\Omega)}^{\f12}\|\Delta B\|_{L^2(\Omega)}+\|\nabla B\|_{L^2(\Omega)}^{\f12}\|\nabla u\|_{L^2(\Omega)}\|\Delta B\|_{L^2(\Omega)}\\
&\leq&\f{C_0^{-1}}6(\|\Delta u\|_{L^2(\Omega)}^2+\|\Delta B\|_{L^2(\Omega)}^2)+C(\|\nabla u\|_{L^2(\Omega)}^2+\|\nabla B\|_{L^2(\Omega)}^2)^2\\
&&+C(\|\nabla u\|_{L^2(\Omega)}^2+\|\nabla B\|_{L^2(\Omega)}^2).
\eeno
Regarding the last two terms, thanks to \eqref{M}, H\"{o}lder inequality, Corollary \ref{C1}, Lemma {\ref{elliptic}}, Lemma \ref{L31} and Young inequality, we have
\beno
&&I_5+I_6\\
&\leq&M\|\nabla\theta\|_{L^4(\Omega)}\big[\|\nabla u\|_{L^4(\Omega)}\|\Delta u\|_{L^2(\Omega)}+\|\nabla B\|_{L^4(\Omega)}\|\Delta B\|_{L^2(\Omega)}\big]\\
&\leq&M\|\nabla\theta\|_{L^2(\Omega)}^{\f12}\|\Delta \theta\|_{L^2(\Omega)}^{\f12}(\|\nabla u\|_{L^2(\Omega)}^{\f12}\|\Delta u\|_{L^2(\Omega)}^{\f12}+\|\nabla u\|_{L^2(\Omega)})\|\Delta u\|_{L^2(\Omega)}\\
&&+M\|\nabla\theta\|_{L^2(\Omega)}(\|\nabla u\|_{L^2(\Omega)}^{\f12}\|\Delta u\|_{L^2(\Omega)}^{\f12}+\|\nabla u\|_{L^2(\Omega)})\|\Delta u\|_{L^2(\Omega)}\\
&&+M\|\nabla\theta\|_{L^2(\Omega)}^{\f12}\|\Delta \theta\|_{L^2(\Omega)}^{\f12}(\|\nabla B\|_{L^2(\Omega)}^{\f12}\|\Delta B\|_{L^2(\Omega)}^{\f12}+\|\nabla u\|_{L^2(\Omega)})\|\Delta B\|_{L^2(\Omega)}\\
&&+M\|\nabla\theta\|_{L^2(\Omega)}(\|\nabla B\|_{L^2(\Omega)}^{\f12}\|\Delta B\|_{L^2(\Omega)}^{\f12}+\|\nabla B\|_{L^2(\Omega)})\|\Delta B\|_{L^2(\Omega)}\\
&\leq&C\|\Delta \theta\|_{L^2(\Omega)}^{\f12}\|\nabla u\|_{L^2(\Omega)}^{\f12}\|\Delta u\|_{L^2(\Omega)}^{\f32}+\|\Delta \theta\|_{L^2(\Omega)}^{\f12}\|\nabla u\|_{L^2(\Omega)}\|\Delta u\|_{L^2(\Omega)}\\
&&+C\|\nabla u\|_{L^2(\Omega)}^{\f12}\|\Delta u\|_{L^2(\Omega)}^{\f32}+C\|\nabla u\|_{L^2(\Omega)}\|\Delta u\|_{L^2(\Omega)}\\
&&+C\|\Delta \theta\|_{L^2(\Omega)}^{\f12}\|\nabla B\|_{L^2(\Omega)}^{\f12}\|\Delta B\|_{L^2(\Omega)}^{\f32}+\|\Delta \theta\|_{L^2(\Omega)}^{\f12}\|\nabla B\|_{L^2(\Omega)}\|\Delta B\|_{L^2(\Omega)}\\
&&+C\|\nabla B\|_{L^2(\Omega)}^{\f12}\|\Delta B\|_{L^2(\Omega)}^{\f32}+C\|\nabla B\|_{L^2(\Omega)}\|\Delta B\|_{L^2(\Omega)}\\
&\leq&\f{C_0^{-1}}6(\|\Delta u\|_{L^2(\Omega)}^2+\|\Delta B\|_{L^2(\Omega)}^2)+C\|\Delta \theta\|_{L^2(\Omega)}^2(\|\nabla u\|_{L^2(\Omega)}^2+\|\nabla B\|_{L^2(\Omega)}^2)\\
&&+C(\|\nabla u\|_{L^2(\Omega)}^2+\|\nabla B\|_{L^2(\Omega)}^2+\|\Delta \theta\|_{L^2(\Omega)}^2).
\eeno
Thus, summing up all the above inequalities, it yields that
\ben\label{INE332}
&&\f{d}{dt}(\|\nabla u\|_{L^2(\Omega)}^2+\|\nabla B\|_{L^2(\Omega)}^2)+C_0^{-1}\|\Delta u\|_{L^2(\Omega)}^2+C_0^{-1}\|\Delta B\|_{L^2(\Omega)}^2\notag\\
&\leq&C(\|\nabla u\|_{L^2(\Omega)}^2+\|\nabla B\|_{L^2(\Omega)}^2)^2+C\|\Delta \theta\|_{L^2(\Omega)}^2(\|\nabla u\|_{L^2(\Omega)}^2+\|\nabla B\|_{L^2(\Omega)}^2)\notag\\
&&+C(\|\theta\|_{L^2(\Omega)}^2+\|\nabla u\|_{L^2(\Omega)}^2+\|\nabla B\|_{L^2(\Omega)}^2+\|\Delta \theta\|_{L^2(\Omega)}^2),
\een
which also implies, after using Gronwall's inequality, Lemma \ref{L31} and \ref{L33}, that
\beno
&&\|\nabla u\|_{L^2(\Omega)}^2+\|\nabla B\|_{L^2(\Omega)}^2+C_0^{-1}\int_0^t(\|\Delta u\|_{L^2(\Omega)}^2+\|\Delta B\|_{L^2(\Omega)}^2)d\tau\\
&\leq&C{\rm exp}\big[{\int_0^t(\|\nabla u\|_{L^2(\Omega)}^2+\|\nabla B\|_{L^2(\Omega)}^2+\|\Delta \theta\|_{L^2(\Omega)}^2)d\tau}\big]\\
&&\times\big[\|\nabla u_0\|_{L^2(\Omega)}^2+\|\nabla B_0\|_{L^2(\Omega)}^2+\int_0^t\|\theta,\,\nabla u,\,\nabla B,\,\Delta \theta\|_{L^2(\Omega)}^2d\tau\big]\\
&\leq&C.
\eeno

Now, we can update \eqref{INE332} as
\ben\label{INE333}
&&\f{d}{dt}(\|\nabla u\|_{L^2(\Omega)}^2+\|\nabla B\|_{L^2(\Omega)}^2)+C_0^{-1}\|\Delta u\|_{L^2(\Omega)}^2+C_0^{-1}\|\Delta B\|_{L^2(\Omega)}^2\notag\\
&\leq&C(\|\theta\|_{L^2(\Omega)}^2+\|\nabla u\|_{L^2(\Omega)}^2+\|\nabla B\|_{L^2(\Omega)}^2+\|\Delta \theta\|_{L^2(\Omega)}^2),
\een
which deduces, after multiplying by $e^{\alpha t}$ on both sides of \eqref{INE333} and integrating in time over $[0,t]$, that
\beno
&&e^{\alpha t}\|\nabla u,\,\nabla B\|_{L^2(\Omega)}^2+C_0^{-1}\int_0^te^{\alpha \tau}\|\Delta{u},\,\Delta{B}\|_{L^2(\Omega)}^2d\tau\leq C\|\nabla{u_0},\,\nabla{B_0}\|_{L^2(\Omega)}^2.
\eeno
\endproof

\begin{corollary}\label{thetatesti}
Under the assumptions of Proposition \ref{estimates}, $\forall\,\,t\geq0$, there holds that
\beno
\int_0^te^{\alpha \tau}\|{u}_{\tau},\,{B}_{\tau}\|_{L^2(\Omega)}^2d\tau\leq C,
\eeno
where $C$ depends on $\|\theta_0,\,u_0,\,B_0\|_{H^1(\Omega)}$, $M$, $C_0$ and $\alpha$.
\end{corollary}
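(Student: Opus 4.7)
The plan is to test the momentum equation $\eqref{B-MHD-equ-uB}^1$ against $u_t$ and the magnetic evolution equation $\eqref{B-MHD-equ-uB}^2$ against $B_t$ in $L^2(\Omega)$, multiply the resulting identities by $e^{\alpha\tau}$, and integrate on $[0,t]$. Since $u|_{\p\Omega}=B|_{\p\Omega}=0$ for all time, we also have $u_t|_{\p\Omega}=0$, $B_t|_{\p\Omega}=0$ and $\nabla\cdot u_t=0$, so the pressure contribution $\int_\Omega\nabla\Pi\cdot u_t\,dx$ vanishes after integration by parts. Absorbing $\|u_t\|_{L^2}^2$ and $\|B_t\|_{L^2}^2$ into the left-hand side via Young's inequality, the task reduces to bounding $\int_0^t e^{\alpha\tau}\|F\|_{L^2(\Omega)}^2 d\tau$ for each right-hand side term $F$ appearing in $\eqref{B-MHD-equ-uB}$.

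The diffusive and buoyancy contributions are immediate from \eqref{M} and Lemmas \ref{L31}, \ref{L33}, \ref{L34}: $\|\mu(\theta)\Delta u\|_{L^2}^2\le M^2\|\Delta u\|_{L^2}^2$, $\|\sigma(\theta)\Delta B\|_{L^2}^2\le M^2\|\Delta B\|_{L^2}^2$ and $\|\theta e_2\|_{L^2}^2=\|\theta\|_{L^2}^2$ are all integrable against $e^{\alpha\tau}$ by the weighted $H^2$-in-time bounds already in place. The convective terms are handled by H\"older's inequality and Corollary \ref{C1}: for instance,
\beno
\|u\cdot\nabla u\|_{L^2}^2\le \|u\|_{L^4}^2\|\nabla u\|_{L^4}^2\le C\|u\|_{L^2}\|\nabla u\|_{L^2}^2\|\Delta u\|_{L^2}+C\|u\|_{L^2}\|\nabla u\|_{L^2}^3,
\eeno
and similarly for $B\cdot\nabla B$, $u\cdot\nabla B$ and $B\cdot\nabla u$. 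Using the exponentially decaying estimates $\|u,B\|_{L^2}^2\le Ce^{-2\alpha\tau}$ and $\|\nabla u,\nabla B\|_{L^2}^2\le Ce^{-\alpha\tau}$ from Lemmas \ref{L31} and \ref{L34}, one extracts a decaying prefactor and then closes via Cauchy--Schwarz against $\int_0^t e^{\alpha\tau}\|\Delta u,\Delta B\|_{L^2}^2 d\tau\le C$.

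The most delicate terms will be the variable-coefficient pieces $\nabla\mu(\theta)\cdot\nabla u$ and $\nabla^{\perp}\sigma(\theta)\,\nabla^{\perp}\cdot B$, since they couple first derivatives of the temperature with those of $u$ and $B$. Using \eqref{M}, Corollary \ref{C1}(2) and Proposition \ref{hattheta1} to convert between $\hat\theta$ and $\theta$, one obtains
\beno
\|\nabla\mu(\theta)\cdot\nabla u\|_{L^2}^2\le M^2\|\nabla\theta\|_{L^4}^2\|\nabla u\|_{L^4}^2\le C\|\nabla\theta\|_{L^2}\|\Delta\theta\|_{L^2}\|\nabla u\|_{L^2}\|\Delta u\|_{L^2}+\text{l.o.t.}
\eeno
After multiplying by $e^{\alpha\tau}$, two of the four factors produce the decay $e^{-\alpha\tau}$ through Lemmas \ref{L33}--\ref{L34}, absorbing the exponential weight; what remains is $\|\Delta\theta\|_{L^2}\|\Delta u\|_{L^2}$, which is integrable in $\tau$ by Cauchy--Schwarz together with the bounds $\int_0^t e^{\alpha\tau}(\|\Delta\theta\|_{L^2}^2+\|\Delta u\|_{L^2}^2)d\tau\le C$ established in the same lemmas. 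Assembling all of these contributions gives the asserted bound on $\int_0^t e^{\alpha\tau}(\|u_\tau\|_{L^2}^2+\|B_\tau\|_{L^2}^2)d\tau$.
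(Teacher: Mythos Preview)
Your proposal is correct and follows essentially the same approach as the paper: test $\eqref{B-MHD-equ-uB}^1$ against $u_t$ and $\eqref{B-MHD-equ-uB}^2$ against $B_t$, absorb $\|u_t,B_t\|_{L^2}^2$ on the left via Young's inequality, and control each remaining $L^2$ contribution using \eqref{M}, Corollary~\ref{C1}, and the weighted bounds of Lemmas~\ref{L31}, \ref{L33}, \ref{L34}. The only cosmetic differences are that the paper first derives the pointwise inequality $\|u_t,B_t\|_{L^2}^2\le Ce^{-2\alpha t}+C\|\Delta\theta,\Delta u,\Delta B\|_{L^2}^2$ before multiplying by $e^{\alpha t}$ and integrating, and that your reference to Proposition~\ref{hattheta1} is unnecessary here since the argument works directly with $\theta$.
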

\no{\bf Proof.}\quad
Taking inner product of $\eqref{B-MHD-equ-uB}^1$ with $u_t$ and $\eqref{B-MHD-equ-uB}^2$ with $B_t$, integrating by parts, employing H\"{o}lder inequality Corollary \ref{C1}, Lemma \ref{elliptic}, Lemma \ref{L31}, Lemma \ref{L33} and Lemma \ref{L34}, one has
\beno
&&\|u_t\|_{L^2(\Omega)}^2+\|B_t\|_{L^2(\Omega)}^2\\
&\leq&\|\theta\|_{L^2(\Omega)}^2+M\|\Delta u\|_{L^2(\Omega)}^2+M\|\Delta B\|_{L^2(\Omega)}^2+\|u\cdot\nabla u\|_{L^2(\Omega)}^2+\|B\cdot\nabla B\|_{L^2(\Omega)}^2\\
&&+\|u\cdot\nabla B\|_{L^2(\Omega)}^2+\|B\cdot\nabla u\|_{L^2(\Omega)}^2+\|\nabla \theta\|_{L^4(\Omega)}^2(\|\nabla u\|_{L^4(\Omega)}^2+\|\nabla B\|_{L^4(\Omega)}^2)\\
&\leq&C\|\theta,\,\Delta u,\,\Delta B\|_{L^2(\Omega)}^2+(\|u\|_{L^2(\Omega)}\|\nabla u\|_{L^2(\Omega)}+\|B\|_{L^2(\Omega)}\|\nabla B\|_{L^2(\Omega)}\\
&&+\|\nabla \theta\|_{L^2(\Omega)}^2+\|\nabla \theta\|_{L^2(\Omega)}\|\Delta \theta\|_{L^2(\Omega)})\times(\|\nabla u\|_{L^2(\Omega)}\|\Delta u\|_{L^2(\Omega)}+\|\nabla u\|_{L^2(\Omega)}^2\\
&&+\|\nabla B\|_{L^2(\Omega)}^2+\|\nabla B\|_{L^2(\Omega)}\|\Delta B\|_{L^2(\Omega)})\\
&\leq&Ce^{-2\alpha t}+C\|\Delta\theta,\,\Delta u,\,\Delta B\|_{L^2(\Omega)}^2.
\eeno
On the basis of this inequality, we can multiply by $e^{\alpha t}$ on both sides of it and integrate in time over $[0,t]$ to get the conclusion.
\endproof

\begin{proof}[Proof of Theorem \ref{T1}]
The proof is a consequence of Schauder's fixed point theorem.
We shall only provide the sketches.

\vskip .1in
To define the functional setting, we fix $T>0$ and $R_0$ to
be specified later. For notational convenience, we
write
$$
X\equiv C(0,T; H^1_0(\Omega))\cap L^2(0,T; H^2(\Omega))
$$
with $\|g\|_X\equiv \|g\|_{C(0,T; H^1_0(\Omega))}+\|g\|_{L^2(0,T; H^2(\Omega))}$,
and define
$$
D=\{g\in X\,|\,\|g\|_X\leq R_0\}.
$$
Clearly, $D\subset X$ is closed and convex.

\vskip .1in
We fix $\epsilon\in(0, 1)$ and define a continuous map on $D$. For any $f,\,g\in D$, we regularize it and the initial data $(\theta_0,u_0,B_0)$ via the
standard mollifying process,
$$
g^{\epsilon}= \rho^{\epsilon}\ast g,\quad\theta_{0}^{\epsilon} = \rho^{\epsilon}\ast \theta_0,\quad u_{0}^{\epsilon} = \rho^{\epsilon}\ast u_0,\quad B_{0}^{\epsilon} = \rho^{\epsilon}\ast B_0,
$$
where $\rho^{\epsilon}$ is the standard mollifier. According to
Lemma \ref{Boussinesq}, the 2D Boussinesq system with smooth external forcing $g^{\epsilon}\cdot \nabla g^{\epsilon}$ and smooth initial data $u_0^{\epsilon},\theta_0^{\epsilon}$
\begin{equation}\label{weq1}
\left\{\begin{array}{ll}
u_t+u\cdot\nabla u-\nabla\cdot(\mu(\theta)\nabla u)+\nabla p=\theta e_2+g^{\epsilon}\cdot \nabla g^{\epsilon},\\
\theta_t+u\cdot\nabla\theta-\nabla\cdot(\kappa(\theta)\nabla \theta)=-u_2,\\
\nabla\cdot u=0,\\
(u,\theta)|_{t=0}=(u_0,\theta_0)(x),~~(u,\theta)|_{\partial \Omega}=0.
\end{array}\right.
\end{equation}
has a unique solution $u^{\epsilon},\,\theta^{\epsilon}$. We then solve the linear parabolic equation with the smooth initial data $B_0^{\epsilon}$
\begin{equation}\label{weq2}
\left\{\begin{array}{ll}
 \p_t B+u^{\epsilon} \cdot \grad B-\grad^{\perp}(\sigma(\theta^{\epsilon})\grad^{\perp}\cdot B)=B\cdot \grad u^{\epsilon},\\
B(x,0)=B_0^{\epsilon}(x),\quad B|_{\p \Omega}=0,
\end{array}\right.
\end{equation}
and denote the solution by $B^{\epsilon}$. This process
allows us to define the map
$$
F^{\epsilon}(g)=w^{\epsilon}.
$$

We then apply Schauder's fixed point theorem to construct a sequence of approximate solutions to \eqref{B-MHD-equ}-\eqref{B-MHD-boundary-B}. It suffices to show that, for any fixed $\epsilon\in(0, 1)$, $F^{\epsilon}: D\rightarrow D$
is continuous and compact. More precisely, we need to show
\begin{enumerate}
\item[(a)] $\|B^{\epsilon}\|_{D}\leq R_0$;
\item[(b)] $\|F^{\epsilon}(g_1)-F^{\epsilon}(g_2)\|_{D}\leq C\|g_1-g_2\|_{D}$ for $C$ indepedent of $\epsilon$ and any $g_1,\,g_2\in D$.
\end{enumerate}

These estimates can be verified as in the proof of Lemma \ref{L31}-\ref{L34}
and we omit the details. In addition, as in the proof of Lemma \ref{L31}-\ref{L34},
we can show that
$$
\|B^\epsilon\|_{C(0,T;H_0^1(\Omega))} + \|B^\epsilon\|_{L^2(0,T;H^2(\Omega))}\leq C,
$$
for a constant $C$ independent of $\epsilon$. These uniform estimates would allow us to pass the limit to obtain a weak solution $(u,\,\theta,\,B)$ as
stated in Theorem \ref{T1}. This completes the proof.
\end{proof}

\section{Global strong solution}\hspace*{\parindent}
In the section, we will concentrate on deriving the global strong solution, i.e the proof of Theorem \ref{T2}. As described in the introduction, to prove Theorem \ref{T2}, the first step is the desired $H^2(\Omega)$ estimates, which is summarized by Lemma \ref{L35} and \ref{L36} as below.

\subsection{$H^2$ Estimates}\hspace*{\parindent}
Due to the appear of boundary effects, we will make use of the estimates of time derivatives and Lemma \ref{elliptic}-\ref{stokes2} to obtain the estimates of spatial derivatives. Therefore, the main work is the $L^2$ estimates of time derivatives.

\begin{lemma}\label{L35}
Suppose $\Omega$ be a bounded domain with smooth boundary, $(u_0,\,B_0)\in H_0^{1}(\Omega)$, $\theta_0\in H^{2}(\Omega)$, and $(\theta,\,u,\,B)$ solves the system \eqref{B-MHD-equ}-\eqref{B-MHD-boundary-B}, then $\forall\,\,t\geq0$, it holds that
\beno
\|\nabla^2{\theta},\,{\theta}_t\|_{L^2(\Omega)}^2\leq Ce^{-\alpha t}
\eeno
and
\beno
\int_0^te^{\alpha \tau}\|\nabla{\theta}_{\tau}\|_{L^2(\Omega)}^2d\tau+\int_0^te^{\alpha \tau}\|\theta\|_{H^3(\Omega)}^2d\tau\leq C,
\eeno
where $C$ depends on $\|\theta_0\|_{H^2(\Omega)}$, $\|u_0,\,B_0\|_{H^1(\Omega)}$, $C_0$, $M$ and $\alpha$.
\end{lemma}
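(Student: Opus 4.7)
The plan is to transfer the $H^2$ estimate for $\theta$ into a time-derivative estimate for the transformed quantity $\hat\theta$, exploit elliptic regularity to recover spatial derivatives, and use Lemma \ref{hattheta} together with Proposition \ref{hattheta1} to pass back to $\theta$. All lower-order ingredients (weighted $L^2_tH^2_x$ bounds for $\theta,u,B$, exponential decay in $H^1$, and $\int e^{\alpha\tau}\|u_\tau,B_\tau\|_{L^2}^2\,d\tau\leq C$) are already available from Lemmas \ref{L31}--\ref{L34} and Corollary \ref{thetatesti}.

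First I would extract a bound on the initial time derivative: evaluating \eqref{simtheta} at $t=0$ gives $\hat\theta_t(0)=\kappa(\theta_0)\Delta\hat\theta_0-u_0\cdot\nabla\hat\theta_0-\kappa(\theta_0)u_{0,2}$, whose $L^2$-norm is finite by $\theta_0\in H^2$, $u_0\in H^1$ and the compatibility condition \eqref{eq3}. Differentiating \eqref{simtheta} in $t$, testing with $\hat\theta_t$, integrating by parts (the boundary terms vanish because $\hat\theta_t|_{\p\Omega}=0$) and using $\nabla\cdot u=0$, I would arrive at an identity of the form
\begin{equation*}
\tfrac12\tfrac{d}{dt}\|\hat\theta_t\|_{L^2}^2+\int_\Omega\kappa(\theta)|\nabla\hat\theta_t|^2\,dx=\mathcal{R},
\end{equation*}
where $\mathcal{R}$ collects five terms involving $u_t\cdot\nabla\hat\theta\,\hat\theta_t$, $\kappa'(\theta)\theta_t\Delta\hat\theta\,\hat\theta_t$, $\kappa'(\theta)\hat\theta_t\nabla\theta\cdot\nabla\hat\theta_t$, $\kappa'(\theta)\theta_t u_2\hat\theta_t$ and $\kappa(\theta)(u_2)_t\hat\theta_t$.

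The remainder $\mathcal{R}$ can be bounded by H\"older, Corollary \ref{C1} (2D Gagliardo--Nirenberg) and Young's inequality, absorbing a small multiple of $\|\nabla\hat\theta_t\|_{L^2}^2$ on the left. For instance $|\int u_t\cdot\nabla\hat\theta\,\hat\theta_t|\lesssim\|u_t\|_{L^2}\|\nabla\hat\theta\|_{L^4}\|\hat\theta_t\|_{L^4}$, which interpolates to dissipation plus $C(\|u_t\|_{L^2}^2+\|\Delta\hat\theta\|_{L^2}^2)\|\hat\theta_t\|_{L^2}^2$ and lower-order remainders. This produces a differential inequality
\begin{equation*}
\tfrac{d}{dt}\|\hat\theta_t\|_{L^2}^2+c\,\|\nabla\hat\theta_t\|_{L^2}^2\leq G(t)\|\hat\theta_t\|_{L^2}^2+H(t),
\end{equation*}
with $G,H$ already shown to be integrable-in-time against the weight $e^{\alpha\tau}$. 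A weighted Gronwall step then yields $\|\hat\theta_t\|_{L^2}^2\leq Ce^{-\alpha t}$ and $\int_0^t e^{\alpha\tau}\|\nabla\hat\theta_\tau\|_{L^2}^2\,d\tau\leq C$, and by \eqref{equiv2} the corresponding bounds hold for $\theta_t$.

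Once the time-derivative bound is in place, the $H^2$ estimate follows by rewriting \eqref{simtheta} as the Poisson problem $-\kappa(\theta)\Delta\hat\theta=-\hat\theta_t-u\cdot\nabla\hat\theta-\kappa(\theta)u_2$; Lemma \ref{elliptic} and Proposition \ref{hattheta1} convert the $L^2$ estimate of the right-hand side into $\|\theta\|_{H^2}^2\leq Ce^{-\alpha t}$. For the $H^3$ statement I would differentiate that elliptic identity once more in space, control $\|\Delta\hat\theta\|_{H^1}$ by $\|\nabla\hat\theta_t\|_{L^2}$, $\|\nabla(u\cdot\nabla\hat\theta)\|_{L^2}$ and $\|\nabla u\|_{H^1}$, and then integrate with the weight $e^{\alpha\tau}$. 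The hardest piece of this program is the term $\int\kappa'(\theta)\theta_t\Delta\hat\theta\,\hat\theta_t\,dx$: both factors $\theta_t$ and $\Delta\hat\theta$ sit at the border of what we control, so one must simultaneously invoke the weighted $L^2_tL^2_x$ bound on $\Delta\hat\theta$ from Lemma \ref{L33}, a Ladyzhenskaya-type interpolation for $\hat\theta_t$, and the weighted integrability of $\|u_t\|_{L^2}$ from Corollary \ref{thetatesti} to turn the resulting coefficient into an $L^1_t$ function and close the Gronwall loop with decay.
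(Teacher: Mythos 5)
Your proposal is correct and follows essentially the same strategy as the paper: an $L^2$ energy estimate for the time derivative of the temperature (with initial value controlled through the equation by $\|\theta_0\|_{H^2}$ and $\|u_0\|_{L^2}$), a weighted Gronwall argument fed by Lemmas \ref{L31}--\ref{L34} and Corollary \ref{thetatesti}, and then elliptic regularity via \eqref{simtheta} and Proposition \ref{hattheta1} to recover the $H^2$ decay and the weighted $H^3$ integrability. The only deviation is that you differentiate the non-divergence-form equation for $\hat\theta$, which creates the borderline term $\int\kappa'(\theta)\theta_t\Delta\hat\theta\,\hat\theta_t\,dx$ that you rightly flag, whereas the paper differentiates the divergence-form equation for $\theta$ so that the commutator involves only $\nabla\theta$; both versions close because $\int_0^te^{\alpha\tau}\|\Delta\hat\theta\|_{L^2(\Omega)}^2\,d\tau\leq C$ is already available from Lemma \ref{L33}.
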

\no{\bf Proof.}\quad
Taking the temporal derivative of $\eqref{B-MHD-equ}^1$, it follows that
\ben\label{thetatequ}
\p_{t}{\theta_t}+u\cdot\nabla{\theta_t}+u_t\cdot\nabla \theta-\nabla\cdot(\kappa(\theta)\nabla{\theta_t})=\nabla\cdot(\kappa'(\theta){\theta_t}\nabla{\theta})-\p_tu_2.
\een
Then multiplying \eqref{thetatequ} by $\theta_t$, integrating on $\Omega$, applying Corollary \ref{C1}, Lemma \ref{elliptic} and Young inequality, we can obtain that
\ben\label{equ34}
&&\f12\f{d}{dt}\|\theta_t\|_{L^2(\Omega)}^2+C_0^{-1}\|\nabla \theta_t\|_{L^2(\Omega)}^2\notag\\
&\leq&-\int_{\Omega}u_t\cdot\nabla\theta\theta_tdx-\int_{\Omega}\kappa'(\theta)\theta_t\nabla\theta\cdot\nabla\theta_tdx-\int_{\Omega}\p_tu_2\theta_tdx\notag\\
&\leq&\|u_t\|_{L^2(\Omega)}\|\theta_t\|_{L^4(\Omega)}\|\nabla \theta\|_{L^4(\Omega)}+M\|\theta_t\|_{L^4(\Omega)}\|\nabla \theta\|_{L^4(\Omega)}\|\nabla \theta_t\|_{L^2(\Omega)}\notag\\
&&+\|u_t\|_{L^2(\Omega)}\|\theta_t\|_{L^2(\Omega)}\notag\\
&\leq&C\|u_t\|_{L^2(\Omega)}\|\theta_t\|_{L^2(\Omega)}^{\f12}\|\nabla\theta_t\|_{L^2(\Omega)}^{\f12}\|\nabla \theta\|_{L^2(\Omega)}^{\f12}\|\Delta \theta\|_{L^2(\Omega)}^{\f12}\notag\\
&&+C\|u_t\|_{L^2(\Omega)}\|\theta_t\|_{L^2(\Omega)}^{\f12}\|\nabla\theta_t\|_{L^2(\Omega)}^{\f12}\|\nabla \theta\|_{L^2(\Omega)}\\
&&+C\|\theta_t\|_{L^2(\Omega)}^{\f12}\|\nabla \theta_t\|_{L^2(\Omega)}^{\f32}(\|\nabla \theta\|_{L^2(\Omega)}^{\f12}\|\Delta \theta\|_{L^2(\Omega)}^{\f12}+\|\nabla \theta\|_{L^2(\Omega)})\notag\\
&&+\|u_t\|_{L^2(\Omega)}\|\theta_t\|_{L^2(\Omega)}\notag\\
&\leq&\f{C_0^{-1}}2\|\nabla\theta_t\|_{L^2(\Omega)}^2+C(\|\nabla \theta\|_{L^2(\Omega)}^2\|\Delta \theta\|_{L^2(\Omega)}^2+\|\nabla \theta\|_{L^2(\Omega)}^4)\|\theta_t\|_{L^2(\Omega)}^2\notag\\
&&+C\|u_t\|_{L^2(\Omega)}^2+C\|\theta_t\|_{L^2(\Omega)}^2,\notag
\een
which further yields, after employing Gronwall's inequality, Lemma \ref{L33} and Corollary \ref{thetatesti}, that
\beno
&&\|\theta_t\|_{L^2(\Omega)}^2+C_0^{-1}\int_0^t\|\nabla \theta_{\tau}\|_{L^2(\Omega)}^2d\tau\\
&\leq&C{\rm exp}\big[{\int_0^t(\|\nabla \theta\|_{L^2(\Omega)}^2\|\Delta \theta\|_{L^2(\Omega)}^2+\|\nabla \theta\|_{L^2(\Omega)}^4)d\tau}\big]\\
&&\times\big[\|\theta_t(0,\,x)\|_{L^2(\Omega)}^2+\int_0^t\|\theta_\tau,\,u_\tau\|_{L^2(\Omega)}^2d\tau\big]\\
&\leq&C.
\eeno
In fact, from $\eqref{B-MHD-equ}^1$, the value of $\|\theta_t(0,\,x)\|_{L^2(\Omega)}$ can be controlled by $\|\theta_0\|_{H^2(\Omega)}$. Now, we can update \eqref{equ34} as
\ben\label{equ341}
&&\f{d}{dt}\|\theta_t\|_{L^2(\Omega)}^2+C_0^{-1}\|\nabla \theta_t\|_{L^2(\Omega)}^2\\
&\leq&C(\|\nabla \theta\|_{L^2(\Omega)}^2\|\Delta \theta\|_{L^2(\Omega)}^2+\|\nabla \theta\|_{L^2(\Omega)}^4)+C\|u_t\|_{L^2(\Omega)}^2+C\|\theta_t\|_{L^2(\Omega)}^2,\notag
\een
which implies, after multiplying by $e^{\alpha t}$ on both sides of \eqref{equ341} and integrating in time over $[0,t]$, that
\ben\label{equ342}
&&e^{\alpha t}\|\theta_t\|_{L^2(\Omega)}^2+C_0^{-1}\int_0^te^{\alpha \tau}\|\nabla\theta_{\tau}\|_{L^2(\Omega)}^2d\tau\leq C.
\een

Subsequently, we get to do the $H^2(\Omega)$ estimates. By using \eqref{hatthetadef}, \eqref{Assumption}, Lemma \ref{hattheta} and Proposition \ref{hattheta1}, it yields that
\beno
&&\|\nabla^2\theta\|_{L^2(\Omega)}^2\leq C\|\Delta{\hat\theta}\|_{L^2(\Omega)}^2+C\|\nabla{\hat\theta}\|_{L^2(\Omega)}^4\\
&\leq&C\|\theta_t\|_{L^2(\Omega)}^2+C\|u\cdot\nabla{\hat\theta}\|_{L^2(\Omega)}^2+C\|u\|_{L^2(\Omega)}^2+C\|\nabla{\theta}\|_{L^2(\Omega)}^4\\
&\leq&Ce^{-\alpha t}+C\|u\|_{L^4(\Omega)}^2\|\nabla{\theta}\|_{L^4(\Omega)}^2\\
&\leq&Ce^{-\alpha t}+C\|u\|_{L^2(\Omega)}\|\nabla u\|_{L^2(\Omega)}(\|\nabla{\theta}\|_{L^2(\Omega)}^2+\|\nabla{\theta}\|_{L^2(\Omega)}\|\nabla^2{\theta}\|_{L^2(\Omega)})\\
&\leq&\f12\|\nabla^2{\theta}\|_{L^2(\Omega)}^2+Ce^{-\alpha t}+C\|u\|_{L^2(\Omega)}^2\|\nabla u\|_{L^2(\Omega)}^2\|\nabla{\theta}\|_{L^2(\Omega)}^2\\
&&+C\|u\|_{L^2(\Omega)}\|\nabla u\|_{L^2(\Omega)}\|\nabla{\theta}\|_{L^2(\Omega)}^2\\
&\leq&\f12\|\nabla^2{\theta}\|_{L^2(\Omega)}^2+Ce^{-\alpha t}.
\eeno
This clearly implies
\ben\label{equ343}
\|\nabla^2\theta\|_{L^2(\Omega)}^2\leq&Ce^{-\alpha t}.
\een

Recall the equation $\eqref{B-MHD-equ}^1$ satisfied by $\theta$, we can rewrite it as
$$-\kappa(\theta)\Delta\theta=\kappa'(\theta)\nabla\theta\cdot\nabla\theta-\theta_t-u\cdot\nabla\theta-u_2.$$
Considering that $\kappa(\theta)$ is positive and bounded by $C_0^{-1}$ below, by Corollary \ref{elliptic}, Lemma \ref{L31}-\ref{L35}, \eqref{equ342}-\eqref{equ343}, there holds that
\beno
&&\|\theta\|_{H^3(\Omega)}\\
&\leq&C\|\kappa'(\theta)\nabla\theta\cdot\nabla{\theta}\|_{H^1(\Omega)}+C\|\theta_t\|_{H^1(\Omega)}+C\|u\cdot\nabla{\theta}\|_{H^1(\Omega)}+C\|u\|_{H^1(\Omega)}\\
&\leq&C\|\nabla\theta\|_{L^4(\Omega)}^2+C\|\nabla\theta\|_{L^4(\Omega)}^2\|\nabla\theta\|_{L^2(\Omega)}+C\|\nabla^2\theta\|_{L^4(\Omega)}\|\nabla\theta\|_{L^{\f43}(\Omega)}
\\
&&+C\|\theta_t\|_{H^1(\Omega)}+C\|u\|_{L^4(\Omega)}\|\nabla\theta\|_{L^4(\Omega)}+C\|\nabla u\|_{L^4(\Omega)}\|\nabla\theta\|_{L^4(\Omega)}\\
&&+C\|u\|_{L^4(\Omega)}\|\nabla^2\theta\|_{L^4(\Omega)}+C\|u\|_{H^1(\Omega)}\\
&\leq&Ce^{-\alpha t}+C\|\theta_t\|_{H^1(\Omega)}+Ce^{-\alpha t}\|\nabla^2\theta\|_{L^4(\Omega)}+Ce^{-\alpha t}\|\nabla u\|_{L^4(\Omega)}\\
&\leq&Ce^{-\alpha t}+C\|\theta_t\|_{H^1(\Omega)}+Ce^{-\alpha t}(\|\nabla^2{\theta}\|_{L^2(\Omega)}+\|\nabla^2{\theta}\|_{L^2(\Omega)}^{\f12}\|\nabla^3{\theta}\|_{L^2(\Omega)}^{\f12})\\
&&+Ce^{-\alpha t}(\|\nabla u\|_{L^2(\Omega)}+\|\nabla{u}\|_{L^2(\Omega)}^{\f12}\|\Delta{u}\|_{L^2(\Omega)}^{\f12})\\
&\leq&\f12\|\nabla^3{\theta}\|_{L^2(\Omega)}^2+Ce^{-\alpha t}+C\|\Delta u\|_{L^2(\Omega)}^2,
\eeno
which implies, after simple calculation, that $\int_0^te^{\alpha \tau}\|\theta\|_{H^3(\Omega)}^2d\tau\leq C$.
\endproof

\begin{lemma}\label{L36}
Under the assumptions of Lemma \ref{L35}, we further assume $(u_0,\,B_0)\in H^{2}(\Omega)$, then we have
\beno
\|\nabla^2u,\,\nabla^2B,\,u_t,\,B_t\|_{L^2(\Omega)}^2\leq Ce^{-\alpha t}
\eeno
and
\beno
\int_0^te^{\alpha \tau}\|\nabla{u}_{\tau},\,\nabla{B}_{\tau}\|_{L^2(\Omega)}^2d\tau+\int_0^te^{\alpha \tau}\|{u},\,{B}\|_{H^3(\Omega)}^2d\tau\leq C
\eeno
for any $t\geq0$, where $C$ depends on $\|\theta_0,\,u_0,\,B_0\|_{H^2(\Omega)}$, $C_0$, $M$ and $\alpha$.
\end{lemma}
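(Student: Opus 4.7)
\textbf{Proof proposal for Lemma \ref{L36}.}
The plan is to mirror the structure used for $\theta$ in Lemma \ref{L35}: first get $L^\infty_tL^2_x$ and $L^2_tH^1_x$ control of the time derivatives $u_t$ and $B_t$ through an energy argument on the time-differentiated equations, and then recover the spatial regularity $H^2$ and $H^3$ via the elliptic (Stokes-type) regularity results of Lemma \ref{stokes2} and Corollary \ref{stokes3}. Throughout I will use Lemmas \ref{L31}--\ref{L35} and Corollary \ref{thetatesti} as black boxes, and introduce an exponential weight $e^{\alpha t}$ at the end to upgrade all estimates to exponentially decaying ones, just as in \eqref{equ341}--\eqref{equ342}.

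First I would differentiate the momentum and induction equations in \eqref{B-MHD-equ} in $t$, obtaining transport-diffusion equations for $u_t$ and $B_t$ whose right-hand sides contain the ``bad'' terms
\begin{equation*}
-\dv(\mu'(\theta)\theta_t\nabla u),\qquad \grad^\perp\!\bigl(\sigma'(\theta)\theta_t\,\nabla^\perp\!\cdot\! B\bigr),\qquad u_t\!\cdot\!\nabla u,\ B_t\!\cdot\!\nabla B,\ \theta_t e_2,\ \text{etc.}
\end{equation*}
Taking the $L^2$ inner products with $u_t$ and $B_t$, adding them, and using \eqref{B-MHD-boundary-B} to integrate by parts (the cross terms $B\cdot\nabla B_t\cdot u_t$ and $B\cdot\nabla u_t\cdot B_t$ cancel, as in the computation preceding \eqref{L2}), I expect an inequality of the form
\begin{equation*}
\tfrac12\tfrac{d}{dt}\|u_t,B_t\|_{L^2}^2+C_0^{-1}\|\nabla u_t,\nabla B_t\|_{L^2}^2
\leq \text{RHS},
\end{equation*}
where each RHS term is handled by H\"older, Corollary \ref{C1}, \eqref{M}, and the already available bounds. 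Typical terms such as $\int u_t\cdot\nabla u\cdot u_t$ are bounded by $\|u_t\|_{L^4}^2\|\nabla u\|_{L^2}\le \|u_t\|_{L^2}\|\nabla u_t\|_{L^2}\|\nabla u\|_{L^2}$; terms involving $\mu'(\theta)\theta_t\nabla u$ become $M\|\theta_t\|_{L^4}\|\nabla u\|_{L^4}\|\nabla u_t\|_{L^2}$ which by Corollary \ref{C1} and Lemma \ref{L35} is absorbed into $\tfrac{C_0^{-1}}{4}\|\nabla u_t\|_{L^2}^2$ plus lower-order terms involving $\|\nabla\theta_t\|_{L^2}^2$ and $\|\Delta u\|_{L^2}^2$. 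After absorbing the highest-order terms on the left, Gronwall's inequality closes the loop: the exponential prefactor is controlled because $\int_0^t\|\nabla\theta\|_{L^2}^2\|\Delta\theta\|_{L^2}^2\,d\tau$ and $\int_0^t\|\nabla u,\nabla B\|_{L^2}^2d\tau$ are finite by Lemmas \ref{L33}--\ref{L34} and \ref{L35}. The initial value $\|u_t(0),B_t(0)\|_{L^2}$ is finite thanks to the compatibility conditions \eqref{eq3}--\eqref{eq4} and $(u_0,B_0)\in H^2$. Multiplying the resulting differential inequality by $e^{\alpha t}$ and using Corollary \ref{thetatesti} and Lemma \ref{L35} gives
\begin{equation*}
e^{\alpha t}\|u_t,B_t\|_{L^2}^2+\int_0^t e^{\alpha\tau}\|\nabla u_\tau,\nabla B_\tau\|_{L^2}^2\,d\tau\le C.
\end{equation*}

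For the spatial $H^2$ bound I would rewrite the momentum equation in nondivergence Stokes form
\begin{equation*}
-\mu(\theta)\Delta u+\nabla\Pi=-u_t-u\!\cdot\!\nabla u+\nabla\mu(\theta)\!\cdot\!\nabla u+\theta e_2+B\!\cdot\!\nabla B
\end{equation*}
and apply Lemma \ref{stokes2}. The right-hand side is bounded in $L^2$ by $u_t$, convective terms estimated via Corollary \ref{C1} and Lemmas \ref{L31}--\ref{L34}, and $\|\nabla\theta\|_{L^4}\|\nabla u\|_{L^4}\le C e^{-\alpha t/2}(\|\Delta u\|_{L^2}^{1/2}+1)$. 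This yields $\|\nabla^2 u\|_{L^2}^2\le Ce^{-\alpha t}$. The analogous elliptic problem for $B$, written as $-\sigma(\theta)\Delta B=-B_t-u\!\cdot\!\nabla B+\nabla^\perp\sigma(\theta)\nabla^\perp\!\cdot\! B+B\!\cdot\!\nabla u$ (using $\dv B=0$ to rewrite $\grad^\perp(\sigma J)$), gives the same bound for $\|\nabla^2 B\|_{L^2}^2$. For the $H^3$ bound I apply Corollary \ref{stokes3} with the RHS measured in $H^1$: the critical terms $\nabla u_t,\nabla B_t$ are integrable in time against $e^{\alpha\tau}$ by the previous step, while the nonlinear terms are bounded using $\|\theta\|_{H^3}^2$ from Lemma \ref{L35} together with Corollary \ref{C1}.

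The main obstacle I anticipate is the simultaneous coupling of $u,B,\theta$ through the variable coefficients $\mu(\theta),\sigma(\theta)$: when differentiating in time, $\theta_t$ appears multiplied against $\nabla u$ or $\nabla^\perp\!\cdot\! B$ and in principle could produce terms of order $\|\nabla\theta_t\|_{L^2}\|\nabla^2 u\|_{L^2}$ that sit at the same differential level as the diffusion. The route around this is to keep these terms in the form $\theta_t\nabla u$ (rather than $\nabla\theta_t\cdot u$) by suitable integration by parts, and use the two-dimensional product estimate $\|\theta_t\|_{L^4}\|\nabla u\|_{L^4}\lesssim \|\theta_t\|_{L^2}^{1/2}\|\nabla\theta_t\|_{L^2}^{1/2}\|\nabla u\|_{L^2}^{1/2}\|\Delta u\|_{L^2}^{1/2}$, so that Young's inequality distributes the two top-order factors into the dissipation of $u_t$ and into the prefactor $\|\Delta u\|_{L^2}^2$ that has already been bounded in $L^1_t$ by Lemma \ref{L34}. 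Once this delicate absorption is carried out, Gronwall's inequality together with the weight $e^{\alpha t}$ closes everything.
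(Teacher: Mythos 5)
Your proposal is correct and follows essentially the same route as the paper: time-differentiate the $u$ and $B$ equations, perform the $L^2$ energy estimate on $(u_t,B_t)$ exploiting the cancellation of the cross terms $\int_\Omega B\cdot\nabla B_t\cdot u_t\,dx+\int_\Omega B\cdot\nabla u_t\cdot B_t\,dx=0$, handle the variable-coefficient terms via $\|\theta_t\|_{L^4}\|\nabla u\|_{L^4}\|\nabla u_t\|_{L^2}$ with the 2D interpolation of Corollary \ref{C1} so that the top-order factors are absorbed into the dissipation and into the $L^1_t$-integrable prefactor $\|\Delta u\|_{L^2}^2$, close with Gronwall and the weight $e^{\alpha t}$, and then recover $\|\nabla^2 u,\nabla^2 B\|_{L^2}$ and the $H^3$ integrability from Lemma \ref{stokes2}, Corollary \ref{stokes3} and Lemma \ref{elliptic} exactly as in the paper. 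The key difficulty you flag (the $\theta_t\nabla u$ and $\theta_t\nabla^\perp\!\cdot B$ couplings) and your resolution of it coincide with the paper's treatment in \eqref{equ352}.
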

\no{\bf Proof.}\quad
Taking the temporal derivative of $\eqref{B-MHD-equ}^2$ and $\eqref{B-MHD-equ}^3$, it follows that
\begin{equation}\label{uBtequ}
\begin{cases}
 \p_t u_t + u\cdot\nabla u_t -\nabla\cdot(\mu(\theta)
\nabla u_t)+\grad \Pi_t=\nabla\cdot(\mu'(\theta)\theta_t
\nabla u)-u_t\cdot\nabla u\\
+\theta_t e_2+B_t\cdot\nabla B+B\cdot\nabla B_t,\\
 \p_t B_t+u \cdot \grad B_t-\grad^{\perp}(\sigma(\theta)\grad^{\perp}\cdot B_t)=\grad^{\perp}(\sigma'(\theta)\theta_t\grad^{\perp}\cdot B_t)-u_t\cdot\nabla B\\
+B_t\cdot \grad u+B\cdot \grad u_t.
\end{cases}
\end{equation}
Now we take inner product of $\eqref{uBtequ}^1$ with $u_t$, $\eqref{uBtequ}^2$ with $B_t$ and apply Lemma \ref{curlB} to get that
\ben\label{equ351}
&&\f12\f{d}{dt}(\|u_t\|_{L^2(\Omega)}^2+\|B_t\|_{L^2(\Omega)}^2)+C_0^{-1}\|\nabla u_t\|_{L^2(\Omega)}^2+C_0^{-1}\|\nabla B_t\|_{L^2(\Omega)}^2\notag\\
&\leq&-\int_{\Omega}\mu'(\theta)\theta_t\nabla u\cdot\nabla u_tdx-\int_{\Omega}u_t\cdot\nabla u\cdot u_tdx+\int_{\Omega}B_t\cdot\nabla B\cdot u_tdx+\int_{\Omega}\theta_te_2u_tdx\notag\\
&&-\int_{\Omega}\sigma'(\theta)\theta_t\grad^{\perp}\cdot B\,\grad^{\perp}\cdot B_tdx-\int_{\Omega}u_t\cdot\nabla B\cdot B_tdx+\int_{\Omega}B_t\cdot\nabla u\cdot B_tdx\notag\\
&=&\sum\limits_{j=1}^7I_j,
\een
here we have used the fact $\int_{\Omega}B\cdot \nabla B_t\cdot u_tdx+\int_{\Omega}B\cdot \nabla u_t\cdot B_tdx=0$. Thanks to H\"{o}lder inequality, Corollary \ref{C1} and Young inequality, there holds that
\ben\label{equ352}
&&I_1+I_5\notag\\
&\leq&C\|\theta_t\|_{L^4(\Omega)}(\|\nabla u\|_{L^4(\Omega)}\|\nabla u_t\|_{L^2(\Omega)}+\|\nabla B\|_{L^4(\Omega)}\|\nabla B_t\|_{L^2(\Omega)})\notag\\
&\leq&\f{C_0^{-1}}4(\|\nabla u_t\|_{L^2(\Omega)}^2+\|\nabla B_t\|_{L^2(\Omega)}^2)+C\|\theta_t\|_{L^4(\Omega)}^2(\|\nabla u\|_{L^4(\Omega)}^2+\|\nabla B\|_{L^4(\Omega)}^2)\notag\\
&\leq&\f{C_0^{-1}}4(\|\nabla u_t\|_{L^2(\Omega)}^2+\|\nabla B_t\|_{L^2(\Omega)}^2)+C\|\theta_t\|_{L^2(\Omega)}\|\nabla\theta_t\|_{L^2(\Omega)}(\|\nabla u\|_{L^2(\Omega)}^2\notag\\
&&+\|\nabla B\|_{L^2(\Omega)}^2+\|\nabla u\|_{L^2(\Omega)}\|\Delta u\|_{L^2(\Omega)}+\|\nabla B\|_{L^2(\Omega)}\|\Delta B\|_{L^2(\Omega)})\\
&\leq&\f{C_0^{-1}}4(\|\nabla u_t\|_{L^2(\Omega)}^2+\|\nabla B_t\|_{L^2(\Omega)}^2)+C\|\theta_t\|_{L^2(\Omega)}^2\|\nabla\theta_t\|_{L^2(\Omega)}^2+C\|\nabla u\|_{L^2(\Omega)}^4\notag\\
&&+C\|\nabla B\|_{L^2(\Omega)}^4+C\|\nabla u\|_{L^2(\Omega)}^2\|\Delta u\|_{L^2(\Omega)}^2+\|\nabla B\|_{L^2(\Omega)}^2\|\Delta B\|_{L^2(\Omega)}^2.\notag
\een
For the left terms, similarly, one has
\beno
&&I_2+I_3+I_4+I_6+I_7\\
&\leq&C(\|u_t\|_{L^4(\Omega)}+\|B_t\|_{L^4(\Omega)})^2(\|\nabla u\|_{L^2(\Omega)}+\|\nabla B\|_{L^2(\Omega)})+\|\theta_t\|_{L^2(\Omega)}\| u_t\|_{L^2(\Omega)}\\
&\leq&C(\|u_t\|_{L^2(\Omega)}\|\nabla u_t\|_{L^2(\Omega)}+\|B_t\|_{L^2(\Omega)}\|\nabla B_t\|_{L^2(\Omega)})(\|\nabla u\|_{L^2(\Omega)}+\|\nabla B\|_{L^2(\Omega)})\\
&&+\|\theta_t\|_{L^2(\Omega)}\| u_t\|_{L^2(\Omega)}\\
&\leq&\f{C_0^{-1}}4(\|\nabla u_t\|_{L^2(\Omega)}^2+\|\nabla B_t\|_{L^2(\Omega)}^2)+C(\|\nabla u\|_{L^2(\Omega)}^2+\|\nabla B\|_{L^2(\Omega)}^2)(\|u_t\|_{L^2(\Omega)}^2\\
&&+\|B_t\|_{L^2(\Omega)}^2)+C\|\theta_t\|_{L^2(\Omega)}^2+C\| u_t\|_{L^2(\Omega)}^2,
\eeno
which together with \eqref{equ351} and \eqref{equ352} yield
\ben\label{equ353}
&&\f{d}{dt}(\|u_t\|_{L^2(\Omega)}^2+\|B_t\|_{L^2(\Omega)}^2)+C_0^{-1}\|\nabla u_t\|_{L^2(\Omega)}^2+C_0^{-1}\|\nabla B_t\|_{L^2(\Omega)}^2\notag\\
&\leq&C(\|\nabla u\|_{L^2(\Omega)}^2+\|\nabla B\|_{L^2(\Omega)}^2)(\|u_t\|_{L^2(\Omega)}^2+\|B_t\|_{L^2(\Omega)}^2)\notag\\
&&+C\|\theta_t\|_{L^2(\Omega)}^2\|\nabla\theta_t\|_{L^2(\Omega)}^2+C\|\nabla u\|_{L^2(\Omega)}^4+C\|\nabla B\|_{L^2(\Omega)}^4\\
&&+C\|\nabla u\|_{L^2(\Omega)}^2\|\Delta u\|_{L^2(\Omega)}^2+C\|\nabla B\|_{L^2(\Omega)}^2\|\Delta B\|_{L^2(\Omega)}^2\notag\\
&&+C\|\theta_t\|_{L^2(\Omega)}^2+C\| u_t\|_{L^2(\Omega)}^2.\notag
\een

Thus, by applying Gronwall's inequality, Lemma \ref{L33}-\ref{L34}, Corollary \ref{thetatesti} and Lemma \ref{L35}, it follows that
\beno
&&\|u_t,\,B_t\|_{L^2(\Omega)}^2+C_0^{-1}\int_0^t\|\nabla u_{\tau},\,\nabla B_{\tau}\|_{L^2(\Omega)}^2d\tau\leq C,
\eeno
with the help of which, \eqref{equ353} can be updated as
\ben\label{equ354}
&&\f{d}{dt}\|u_t,\,B_t\|_{L^2(\Omega)}^2+C_0^{-1}\|\nabla u_t,\,\nabla B_t\|_{L^2(\Omega)}^2\notag\\
&\leq&C\|\nabla u\|_{L^2(\Omega)}^2+C\|\nabla B\|_{L^2(\Omega)}^2+C\|\nabla\theta_t\|_{L^2(\Omega)}^2+C\|\Delta u\|_{L^2(\Omega)}^2\\
&&+C\|\Delta B\|_{L^2(\Omega)}^2+C\|\theta_t\|_{L^2(\Omega)}^2+C\| u_t\|_{L^2(\Omega)}^2.\notag
\een
Next, we multiply by $e^{\alpha t}$ on both sides of \eqref{equ354} and integrate in time over $[0,t]$ to derive
\ben\label{equ355}
&&e^{\alpha t}\|u_t,\,B_t\|_{L^2(\Omega)}^2+C_0^{-1}\int_0^te^{\alpha \tau}\|\nabla u_{\tau},\,\nabla B_{\tau}\|_{L^2(\Omega)}^2d\tau\leq C.
\een

At last, recall the equations \eqref{B-MHD-equ-uB}, by employing Lemma \ref{elliptic}-\ref{stokes2}, \ref{L31}-\ref{L35} and \eqref{equ355}, we have
\beno
&&\|\nabla^2u\|_{L^2(\Omega)}+\|\nabla^2B\|_{L^2(\Omega)}+\|\nabla\pi\|_{L^2(\Omega)}\\
&\leq&C\|u_t\|_{L^2(\Omega)}+C\|\mu'(\theta)\nabla\theta\cdot\nabla u\|_{L^2(\Omega)}+C\|u\cdot\nabla u\|_{L^2(\Omega)}+C\|B\cdot\nabla B\|_{L^2(\Omega)}\\
&&+C\|\theta\|_{L^2(\Omega)}+C\|u\|_{H^1(\Omega)}+C\|\pi\|_{L^2(\Omega)}+C\|B_t\|_{L^2(\Omega)}\\
&&+C\|\sigma'(\theta)\grad^{\perp}\theta\grad^{\perp}\cdot B\|_{L^2(\Omega)}+C\|u\cdot\nabla B\|_{L^2(\Omega)}+C\|B\cdot\nabla u\|_{L^2(\Omega)}\\
&\leq&C\|u_t\|_{L^2(\Omega)}+C\|\mu'(\theta)\nabla\theta\cdot\nabla u\|_{L^2(\Omega)}+C\|u\cdot\nabla u\|_{L^2(\Omega)}+C\|B\cdot\nabla B\|_{L^2(\Omega)}\\
&&+C\|\theta\|_{L^2(\Omega)}+C\|u\|_{H^1(\Omega)}+C\|B_t\|_{L^2(\Omega)}+C\|\sigma'(\theta)\grad^{\perp}\theta\grad^{\perp}\cdot B\|_{L^2(\Omega)}\\
&&+C\|u\cdot\nabla B\|_{L^2(\Omega)}+C\|B\cdot\nabla u\|_{L^2(\Omega)}\\
&\leq&C(\|u_t\|_{L^2(\Omega)}+\|B_t\|_{L^2(\Omega)}+\|\theta\|_{L^2(\Omega)}+\|u\|_{H^1(\Omega)})+C\|\nabla \theta\|_{L^4(\Omega)}(\|\nabla u\|_{L^4(\Omega)}\\
&&+\|\nabla B\|_{L^4(\Omega)})+C(\|u\|_{L^4(\Omega)}+\|B\|_{L^4(\Omega)})(\|\nabla u\|_{L^4(\Omega)}+\|\nabla B\|_{L^4(\Omega)})\\
&\leq&C(\|u_t\|_{L^2(\Omega)}+\|B_t\|_{L^2(\Omega)}+\|\theta\|_{L^2(\Omega)}+\|u\|_{H^1(\Omega)})+C(\|u\|_{H^1(\Omega)}+\|B\|_{H^1(\Omega)}\\
&&+\|\theta\|_{H^2(\Omega)})\times\big[\|\nabla u\|_{L^2(\Omega)}+\|\nabla u\|_{L^2(\Omega)}^{\f12}\|\nabla^2 u\|_{L^2(\Omega)}^{\f12}+\|\nabla B\|_{L^2(\Omega)}\\
&&+\|\nabla B\|_{L^2(\Omega)}^{\f12}\|\nabla^2 B\|_{L^2(\Omega)}^{\f12}\big]\\
&\leq&\f12\|\nabla^2u\|_{L^2(\Omega)}+\f12\|\nabla^2B\|_{L^2(\Omega)}+Ce^{-\f{\alpha}2 t},
\eeno
which actually shows $\|\nabla^2u\|_{L^2(\Omega)}^2+\|\nabla^2B\|_{L^2(\Omega)}^2\leq Ce^{-\alpha t}$. By Corollary \ref{stokes3}, Lemma \ref{elliptic} and similar calculation as above, we can also obtain $\int_0^te^{\alpha \tau}\|{u},\,{B}\|_{H^3(\Omega)}^2d\tau\leq C$.
\endproof

\subsection{Proof of Theorem \ref{T2}}\hspace*{\parindent}\\

\no{\bf Proof.}\quad
The existence of strong solutions is from Theorem \ref{T1} and Lemma \ref{L35}-\ref{L36}, the left thing is to show the uniqueness of the solution.\\
\\
{\bf Uniqueness:} For any fixed $T>0$, suppose there are two solutions $(\theta,u,B,\pi)$, $(\widetilde{\theta},\widetilde{u},\widetilde{B},\widetilde{\pi})$ of \eqref{B-MHD-equ} and let $\bar{\theta}=\widetilde{\theta}-\theta,\,\,\bar{u}=\widetilde{u}-u,\,\,\bar{B}=\widetilde{B}-B,\,\,\bar{\pi}=\widetilde{\pi}-\pi$. Then by Remark \ref{gw1}, for any $\phi\in L^2(0,T; H_0^1(\Omega))$ and $\psi,\,\varphi\in L^2(0,T; V)$, $(\bar{\theta},\bar{u},\bar{B},\bar{\pi})$ satisfies the following problem:
\ben\label{unia}
&&\f{d}{dt}<\bar{\theta},\phi>+(\kappa(\widetilde{\theta})\nabla\bar{\theta},\nabla\phi)+([\kappa(\widetilde{\theta})-\kappa(\theta)]\nabla \theta,\nabla\phi)+b(\widetilde{u},\bar{\theta},\phi)\notag\\
&=&-b(\bar{u},\theta,\phi)-(\bar{u}_2,\phi),
\een
\ben\label{unib}
&&\f{d}{dt}<\bar{u},\psi>+(\mu(\widetilde{\theta})\nabla\bar{u},\nabla\psi)+([\mu(\widetilde{\theta})-\mu(\theta)]\nabla u,\nabla\psi)+b(\widetilde{u},\bar{u},\psi)\notag\\
&=&-b(\bar{u},u,\psi)+b(\widetilde{B},\bar{B},\psi)+b(\bar{B},B,\psi)+(\bar{\theta}e_2,\psi),
\een
\ben\label{unic}
&&\f{d}{dt}<\bar{B},\varphi>+(\sigma(\widetilde{\theta})\grad^{\perp}\cdot \bar{B},\grad^{\perp}\cdot\varphi)-([\sigma(\widetilde{\theta})-\sigma(\theta)]\grad^{\perp}\cdot {B},\grad^{\perp}\cdot\varphi)\notag\\
&=&-b(\widetilde{u},\bar{B},\psi)-b(\bar{u},B,\psi)+b(\widetilde{B},\bar{u},\psi)+b(\bar{B},u,\psi),
\een
\ben\label{unid}
(\bar{\theta},\bar{u},\bar{B})|_{\p\Omega}=0,\,\,\,(\bar{\theta},\bar{u},\bar{B})(x,0)=0.
\een

Taking $\phi=\bar{\theta}$ in $(\ref{unia})$, $\psi=\bar{u}$ in $(\ref{unib})$, $\varphi=\bar{B}$ in $(\ref{unic})$  respectively, and applying Lemma \ref{curlB}, it follows that
\ben\label{uni0}
&&\f12\f{d}{dt}\|\bar{\theta},\,\bar{u},\,\bar{B}\|_{L^2(\Omega)}^2+C_0^{-1}\|\nabla\bar{\theta},\,\nabla \bar{u},\,\nabla\bar{B}\|_{L^2(\Omega)}^2\notag\\
&\leq&-\int_{\Omega}[\kappa(\widetilde{\theta})-\kappa(\theta)]\nabla\theta\cdot\nabla\bar{\theta}dx-\int_{\Omega}\bar{u}\cdot\nabla \theta\bar{\theta}dx-\int_{\Omega}\bar{u}\cdot\nabla u\cdot\bar{u}dx\\
&&-\int_{\Omega}[\mu(\widetilde{\theta})-\mu(\theta)]\nabla u\cdot\nabla\bar{u}\,dx-\int_{\Omega}\bar{B}\cdot\nabla B\cdot\bar{u}dx-\int_{\Omega}\bar{u}\cdot\nabla B\cdot\bar{B}dx\notag\\
&&-\int_{\Omega}[\sigma(\widetilde{\theta})-\sigma(\theta)]\grad^{\perp}\cdot B\grad^{\perp}\cdot \bar{B}dx-\int_{\Omega}\bar{B}\cdot\nabla u\cdot\bar{B}dx\notag\\
&=&\sum\limits_{j=1}^8I_j,\notag
\een
where we make use of (\ref{b2}), the fact $(\bar{\theta}e_2,\bar{u})-(\bar{u}_2,\bar{\theta})=0$, $b(\widetilde{B},\bar{B},\bar{u})+b(\widetilde{B},\bar{u},\bar{B})=0$ and Lions-Magenes Lemma (see e.g., \cite{RT}).
In the following, we will deal with the eight terms one by one. Firstly, by (\ref{M}), it is clear to get
\ben\label{uni00}
&&\|\kappa(\widetilde{\theta})-\kappa(\theta)\|_{L^2(\Omega)}+\|\mu(\widetilde{\theta})-\mu(\theta)\|_{L^2(\Omega)}+\|\sigma(\widetilde{\theta})-\sigma(\theta)\|_{L^2(\Omega)}\notag\\
&&+\|\kappa'(\widetilde{\theta})-\kappa'(\theta)\|_{L^2(\Omega)}+\|\mu'(\widetilde{\theta})-\mu'(\theta)\|_{L^2(\Omega)}+\|\sigma'(\widetilde{\theta})-\sigma'(\theta)\|_{L^2(\Omega)}\notag\\
&\leq&\|\int_{\theta}^{\widetilde{\theta}}\kappa'(s)\,ds\|_{L^2(\Omega)}+\|\int_{\theta}^{\widetilde{\theta}}\mu'(s)\,ds\|_{L^2(\Omega)}+\|\int_{\theta}^{\widetilde{\theta}}\sigma'(s)\,ds\|_{L^2(\Omega)}\\
&&+\|\int_{\theta}^{\widetilde{\theta}}\kappa''(s)\,ds\|_{L^2(\Omega)}+\|\int_{\theta}^{\widetilde{\theta}}\mu''(s)\,ds\|_{L^2(\Omega)}+\|\int_{\theta}^{\widetilde{\theta}}\sigma''(s)\,ds\|_{L^2(\Omega)}\notag\\
&\leq&M\|\bar{\theta}\|_{L^2(\Omega)}.\notag
\een

Then by H\"{o}lder inequality, it holds that
\ben\label{uni1}
&&I_1+I_4+I_7\notag\\
&\leq&\|\nabla\bar{\theta}\|_{L^2(\Omega)}\|\nabla{\theta}\|_{L^4(\Omega)}\|\kappa(\widetilde{\theta})-\kappa(\theta)\|_{L^4(\Omega)}+
\|\nabla\bar{u}\|_{L^2(\Omega)}\|\nabla{u}\|_{L^4(\Omega)}\|\mu(\widetilde{\theta})-\mu(\theta)\|_{L^4(\Omega)}\notag\\
&&+\|\nabla\bar{B}\|_{L^2(\Omega)}\|\nabla{B}\|_{L^4(\Omega)}\|\sigma(\widetilde{\theta})-\sigma(\theta)\|_{L^4(\Omega)}.
\een
On the other hand,  by use of (\ref{uni00}), Corollary \ref{C1}, $\|\kappa(\widetilde{\theta})-\kappa(\theta)\|_{L^4(\Omega)}$ can be estimated as
\beno
&&\|\kappa(\widetilde{\theta})-\kappa(\theta)\|_{L^4(\Omega)}\notag\\
&\leq&\|\kappa(\widetilde{\theta})-\kappa(\theta)\|_{L^2(\Omega)}^{\f12}\|\kappa'(\widetilde{\theta})\nabla\widetilde{\theta}-\kappa'(\theta)\nabla\theta\|_{L^2(\Omega)}^{\f12}+\|\kappa(\widetilde{\theta})-\kappa(\theta)\|_{L^2(\Omega)}\notag\\
&\leq&C\|\bar{\theta}\|_{L^2(\Omega)}^{\f12}(\|\kappa'(\widetilde{\theta})\nabla\bar{\theta}\|_{L^{2(\Omega)}}^{\f12}+\|(\kappa'(\widetilde{\theta})-\kappa'(\theta))\nabla\theta\|_{L^{2}(\Omega)}^{\f12})+\|\bar{\theta}\|_{L^2(\Omega)}\\
&\leq&C\|\bar{\theta}\|_{L^2(\Omega)}^{\f12}(\|\kappa'(\widetilde{\theta})\|_{L^\infty(\Omega)}^{\f12}\|\nabla\bar{\theta}\|_{L^{2}(\Omega)}^{\f12}
+\|\bar{\theta}\|_{L^{2}(\Omega)}^{\f12}\|\nabla\theta\|_{L^{\infty}(\Omega)}^{\f12})+\|\bar{\theta}\|_{L^2(\Omega)}\notag\\
&\leq&C\|\bar{\theta}\|_{L^2(\Omega)}^{\f12}\|\nabla\bar{\theta}\|_{L^{2}(\Omega)}^{\f12}
+\|\bar{\theta}\|_{L^{2}(\Omega)}\|\nabla\theta\|_{L^{\infty}(\Omega)}^{\f12}+\|\bar{\theta}\|_{L^2(\Omega)}\notag\\
&\leq&C\|\bar{\theta}\|_{L^2(\Omega)}^{\f12}\|\nabla\bar{\theta}\|_{L^{2}(\Omega)}^{\f12}
+\|\bar{\theta}\|_{L^{2}(\Omega)}\|\theta\|_{H^{3}(\Omega)}^{\f12}+\|\bar{\theta}\|_{L^2(\Omega)}\notag,
\eeno
which also holds for $\|\mu(\widetilde{\theta})-\mu(\theta)\|_{L^4(\Omega)}$ and $\|\sigma(\widetilde{\theta})-\sigma(\theta)\|_{L^4(\Omega)}$. Thus, by Lemma \ref{L31}-\ref{L36}, we can update \eqref{uni1} as
\ben\label{uni2}
&&I_1+I_4+I_7\notag\\
&\leq&C(\|\bar{\theta}\|_{L^2(\Omega)}^{\f12}\|\nabla\bar{\theta}\|_{L^{2}(\Omega)}^{\f12}
+\|\bar{\theta}\|_{L^{2}(\Omega)}\|\theta\|_{H^{3}(\Omega)}^{\f12}+\|\bar{\theta}\|_{L^2(\Omega)})\big[\|\nabla\bar{\theta}\|_{L^2(\Omega)}(\|\nabla{\theta}\|_{L^2(\Omega)}\notag\\
&&+\|\nabla{\theta}\|_{L^2(\Omega)}^{\f12}\|\Delta{\theta}\|_{L^2(\Omega)}^{\f12})+\|\nabla\bar{u}\|_{L^2(\Omega)}(\|\nabla{u}\|_{L^2(\Omega)}^{\f12}\|\Delta{u}\|_{L^2(\Omega)}^{\f12}+\|\nabla{u}\|_{L^2(\Omega)})\notag\\
&&+\|\nabla\bar{B}\|_{L^2(\Omega)}(\|\nabla{B}\|_{L^2(\Omega)}^{\f12}\|\Delta{B}\|_{L^2(\Omega)}^{\f12}+\|\nabla{B}\|_{L^2(\Omega)})\big]\\
&\leq&Ce^{-\f{\alpha t}{2}}\|\nabla\bar{\theta},\,\nabla\bar{u},\,\nabla\bar{B}\|_{L^2(\Omega)}(\|\bar{\theta}\|_{L^2(\Omega)}^{\f12}\|\nabla\bar{\theta}\|_{L^{2}(\Omega)}^{\f12}
+\|\bar{\theta}\|_{L^{2}(\Omega)}\|\theta\|_{H^{3}(\Omega)}^{\f12}+\|\bar{\theta}\|_{L^2(\Omega)})\notag\\
&\leq&\f14\|\nabla\bar{\theta},\,\nabla\bar{u},\,\nabla\bar{B}\|_{L^2(\Omega)}^2+Ce^{-\alpha t}(\|\bar{\theta}\|_{L^2(\Omega)}^2
+\|\theta\|_{H^{3}(\Omega)}\|\bar{\theta}\|_{L^{2}(\Omega)}^2).\notag
\een

To estimate the left terms, by employing Corollary \ref{C1}, Lemma \ref{L31}-\ref{L36}, one can get
\ben\label{uni3}
&&I_2+I_3+I_5+I_8\notag\\
&\leq&\|\nabla{\theta},\,\nabla{u},\,\nabla{B}\|_{L^2(\Omega)}\|\bar{\theta},\,\bar{u},\,\bar{B}\|_{L^4(\Omega)}^2\notag\\
&\leq&Ce^{-\f{\alpha t}{2}}\|\bar{\theta},\,\bar{u},\,\bar{B}\|_{L^2(\Omega)}\|\nabla\bar{\theta},\,\nabla\bar{u},\,\nabla\bar{B}\|_{L^2(\Omega)}\\
&\leq&\f14\|\nabla\bar{\theta},\,\nabla\bar{u},\,\nabla\bar{B}\|_{L^2(\Omega)}^2+Ce^{-\alpha t}\|\bar{\theta},\,\bar{u},\,\bar{B}\|_{L^2(\Omega)}^2\notag.
\een

Thus, by summing up (\ref{uni0}), (\ref{uni2})-(\ref{uni3}) and using Young inequality, we have
\beno
&&\f{d}{dt}\|\bar{\theta},\,\bar{u},\,\bar{B}\|_{L^2(\Omega)}^2+C_0^{-1}\|\nabla\bar{\theta},\,\nabla \bar{u},\,\nabla\bar{B}\|_{L^2(\Omega)}^2\\
&\leq&C(e^{-\alpha t}+e^{\alpha t}\|\theta\|_{H^{3}(\Omega)}^2)\|\bar{\theta},\,\bar{u},\,\bar{B}\|_{L^2(\Omega)}^2,
\eeno
which implies, after applying Gronwall's inequality and Lemma \ref{L35}, that
\beno
\|\bar{\theta}\|_{L^2(\Omega)}^2+\|\bar{u}\|_{L^2(\Omega)}^2+\|\bar{B}\|_{L^2(\Omega)}^2\leq C(\|\bar{\theta}_0\|_{L^2(\Omega)}^2+\|\bar{u}_0\|_{L^2(\Omega)}^2+\|\bar{B}_0\|_{L^2(\Omega)}^2)=0
\eeno
for any $t\in[0,T].$ Thus we finish all the proof.
\endproof

\section*{Acknowledgments}
D. Bian is partially supported by the National Natural Science Foundation of China (Nos. 11501028 and 11471323), the Postdoctoral Science Foundation of China (No. 2016T90038), and the Basic Research Foundation of Beijing Institute of Technology (20151742001). J. Liu is supported by the Connotation Development Funds of Beijing University of Technology (No. 006000514116041).
\par

\end{document}